\definecolor{mygreen}{rgb}{0, 0.4, 0}
\definecolor{myblue}{rgb}{0, 0, 0.5}
\newcommand{\uE}{\underline{E}}
\newcommand{\uF}{\underline{\mathcal{F}}}
\newcommand{\CA}{\mathcal{A}}
\newcommand{\Ind}{\mathop{\mathrm{Ind}}\nolimits}
\newcommand{\cd}{\mathop{\mathrm{cd}}\nolimits}
\newcommand{\CU}{\mathbf{U}}
\newcommand{\BP}{\mathrm{BP}}
\newcommand{\bp}{\mathrm{bp}}
\newcommand{\Mod}{\mathop{\mathrm{Mod}}\nolimits}
\newcommand{\PMod}{\mathop{\mathrm{PMod}}\nolimits}
\newcommand{\bk}{\mathbf{k}}
\newcommand{\Homeo}{\mathrm{Homeo}}
\newcommand{\Sp}{\mathrm{Sp}}
\newcommand{\cur}{\mathcal{C}}
\newcommand{\fB}{\mathfrak{B}}
\newcommand{\CE}{\mathcal{E}}
\newcommand{\CF}{\mathcal{F}}
\newcommand{\CS}{\mathcal{U}}
\newcommand{\Stab}{\mathop{\mathrm{Stab}}\nolimits}
\newcommand{\I}{\mathcal{I}}
\newcommand{\J}{\mathcal{J}}
\newcommand{\T}{\mathcal{T}}
\newcommand{\h}{\mathfrak{h}}
\newcommand{\B}{\mathcal{B}}
 \newcommand{\M}{\mathcal{M}}
 \newcommand{\Q}{\mathbb{Q}}
\newcommand{\F}{\mathbb{F}}
\newcommand{\Z}{\mathbb{Z}}
\newcommand{\R}{\mathbb{R}}
\newcommand{\bb}{\mathbf{b}}
\newcommand{\rank}{\mathop{\mathrm{rank}}}
\newcommand{\hE}{\widehat{E}}
\newcommand{\hPi}{\widehat{\Pi}}
\newcommand{\fN}{\mathfrak{N}}
\newcommand{\HH}{\mathrm{H}}
\newcommand{\bgamma}{\boldsymbol{\gamma}}
\newtheorem{theorem}{Theorem}[section] 
\newtheorem{propos}[theorem] {Proposition}
\newtheorem{cor}[theorem] {Corollary}
\newtheorem{lem}[theorem]{Lemma}
\newtheorem{fact}[theorem] {Fact}
\theoremstyle{definition}
\newtheorem{remark}[theorem]{Remark}
\newtheorem{definition}[theorem]{Definition}
\numberwithin{equation}{section}
\author{Alexander A. Gaifullin}
\thanks{The work is supported by a grant of the President of the Russian Federation (grant MD-2907.2017.1), by the Russian Foundation for Basic Research (grant 18-51-50005), and by the Theoretical Physics and Mathematics Advancement Foundation ``BASIS'' (grant 22-7-2-10-1).}
\subjclass[2020]{57K20 (Primary); 57M07, 20J05, 20F34 (Secondary)}
\title{On infinitely generated homology of Torelli groups}
\date{}
\address{Steklov Mathematical Institute of Russian Academy of Sciences, Moscow, Russia}
\address{Skolkovo Institute of Science and Technology, Moscow, Russia}
\address{Lomonosov Moscow State University, Russia}
\address{Institute for Information Transmission Problems of the Russian Academy of Sciences (Kharkevich Institute), Moscow, Russia}
\email{agaif@mi-ras.ru}
\keywords{Torelli group, homology of groups, complex of cycles, abelian cycle, spectral sequence}
\begin{document}

\begin{abstract}
Let $\I_g$ be the Torelli group of an oriented closed surface~$S_g$ of genus~$g$, that is, the kernel of the action of the mapping class group on the first integral homology group of~$S_g$.
We prove that the $k$\textsuperscript{th} integral homology group of~$\I_g$ contains a free abelian subgroup of infinite rank, provided that $g\ge 3$ and $2g-3\le k\le 3g-6$. Earlier the same property was known only for $k=3g-5$ (Bestvina, Bux, Margalit, 2007) and in the special case $g=k=3$ (Johnson, Millson, 1992). We also show that  the hyperelliptic involution acts on the constructed infinite system of linearly independent homology classes in $\HH_k(\I_g;\Z)$ as multiplication by~$-1$, provided that $k+g$ is even, thus solving negatively a problem by Hain. For $k=2g-3$, we show that the group~$\HH_{2g-3}(\I_g;\Z)$ contains a free abelian subgroup of infinite rank generated by abelian cycles and we construct explicitly an infinite system of abelian cycles generating such subgroup.  As a consequence of our results, we obtain that an Eilenberg--MacLane CW complex of type~$K(\I_g,1)$ cannot have a finite $(2g-3)$-skeleton.
The proofs are based on the study of the spectral sequence for the action of~$\I_g$ on the complex of cycles constructed by Bestvina, Bux, and Margalit.
\end{abstract}

\maketitle

\section{Introduction}

Let $S_g$ be an oriented closed surface of genus~$g$. Recall that the \textit{mapping class group} of~$S_g$ is the group
$$
\Mod(S_g)=\pi_0\Homeo^+(S_g),
$$
where $\Homeo^+(S_g)$ is the group of orientation preserving homeomorphisms of~$S_g$ on itself. The action of the mapping class group on the first integral homology group of~$S_g$ preserves the intersection form and yields a surjective homomorphism
$$
\Mod(S_g)\to\Sp(2g,\Z).
$$
The kernel of this homomorphism is called the \textit{Torelli group} of~$S_g$ and is denoted by~$\I_g$. 

It is a classical result that $\Mod(S_1)=\mathrm{SL}(2,\Z)$ and so~$\I_1$ is trivial, cf.~\cite[Theorem~2.5]{FaMa12}. Mess~\cite{Mes92} proved that~$\I_2$ is an infinitely generated free group. (The fact that $\I_2$ is not finitely generated was earlier proved by McCullough and Miller~\cite{MCM86}.) Johnson~\cite{Joh83} showed that, for $g\ge 3$, the group~$\I_g$ is finitely generated, and described explicitly a finite set of generators. Nevertheless, the structure of the Torelli groups $\I_g$, where $g\ge 3$, is still rather poorly understood.

An interesting and important problem is to study the homology of the Torelli groups~$\I_g$ for $g\ge 3$. For the sake of simplicity, we denote the homology groups~$\HH_k(G;\Z)$ with integral coefficients simply by~$\HH_k(G)$.
The group $\HH_1(\I_g)$, i.\,e., the abelianization of~$\I_g$, was computed  by Johnson~\cite{Joh85b}. He showed that
$$
\HH_1(\I_g)\cong \Z^{\binom{2g}{3}-2g}\oplus(\Z/2\Z)^{\binom{2g}{2}+2g}
$$
and described explicitly the structure of an $\Sp(2g,\Z)$-module on this group. The problem of explicit computation of~$\HH_k(\I_g)$ for $k\ge 2$ seems to be extremely hard. So the reasonable questions are:
\begin{itemize}
\item Which groups~$\HH_k(\I_g)$ are trivial and which are not?
\item Which groups~$\HH_k(\I_g)$ are finitely generated and which are not?
\end{itemize}
One of the first results towards the latter question was obtained by Akita~\cite{Aki01} who showed that for $g\ge 7$, the total rational homology group
$$
\HH_*(\I_g;\Q)=\bigoplus_{k\ge 0}\HH_k(\I_g;\Q)
$$
is an infinite-dimensional vector space.  For $g=3$, it is known that both groups~$\HH_3(\I_3)$ and~$\HH_4(\I_3)$ contain a free abelian subgroup of infinite rank, see~\cite{Mes92} and~\cite{Hai02}, respectively.

In 2007 Bestvina, Bux, and Margalit~\cite{BBM07} developed a new method for studying the homology of the Torelli groups based on the spectral sequence for the action of~$\I_g$ on a special contractible cell complex~$\B_g$ called  the \textit{complex of cycles}. They proved that the cohomological dimension of~$\I_g$ is equal to~$3g-5$ and the top homology group~$\HH_{3g-5}(\I_g)$ is not finitely generated. (Recall that the \textit{cohomological dimension}~$\cd(G)$ of a group~$G$ is the largest integer~$n$ for which there exists a $G$-module~$M$ such that $\HH^n(G;M)\ne 0$.)  Moreover, the proof in~\cite{BBM07} actually implies that $\HH_{3g-5}(\I_g)$ contains a free abelian subgroup of infinite rank, see Section~\ref{subsection_BBM} for details. The question of whether the groups~$\HH_k(\I_g)$ are finitely generated or not for $1<k<3g-5$ has remained completely open, except for the above mentioned case $k=g=3$ considered in~\cite{Mes92}. The main result of the present paper is as follows.

\begin{theorem}\label{theorem_main}
Suppose that  $g\ge 3$ and $2g-3\le k\le 3g-6$. Then the group $\HH_k(\I_g)$ is not finitely generated. Moreover, $\HH_k(\I_g)$ contains a free abelian subgroup of infinite rank.
\end{theorem}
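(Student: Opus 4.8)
The plan is to analyze the Cartan--Leray spectral sequence for the action of $\I_g$ on the complex of cycles~$\B_g$ of Bestvina, Bux, and Margalit. Since $\B_g$ is contractible and $\I_g$ acts on it cellularly, this spectral sequence takes the form
\begin{equation*}
E^1_{p,q}=\bigoplus_{[\sigma]}H_q\bigl(\Stab_{\I_g}(\sigma);\Z_\sigma\bigr)\ \Longrightarrow\ H_{p+q}(\I_g),
\end{equation*}
where $[\sigma]$ runs over representatives of the $\I_g$-orbits of $p$-cells and $\Z_\sigma$ is the orientation module of~$\sigma$. First I would recall from~\cite{BBM07} the combinatorial description of the cells — each cell is recorded by a weighted oriented multicurve representing a fixed primitive class $x\in H_1(S_g)$ — together with the description of $\Stab_{\I_g}(\sigma)$ as an extension of the Torelli-type groups of the subsurfaces cut off by the multicurve by the free abelian group of Dehn twists about the separating curves of the cycle.

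For a fixed target degree $k$ with $2g-3\le k\le 3g-6$, I would pin down a single position $(p,q)$ with $p+q=k$ on the $E^1$-page that already carries a free abelian group of infinite rank, and then argue that it survives to $E^\infty$. The infinite rank I would produce from two ingredients. The basic building block is an explicit abelian cycle: a collection of $p$ pairwise disjoint separating curves together with some bounding pairs whose twists lie in~$\I_g$ and commute, so that the image of the fundamental class of the corresponding torus lands in the homology of the stabilizer — in the extreme case $k=2g-3$ one can use exactly $2g-3$ disjoint separating twists, the maximal possible disjoint separating system. To then realise \emph{infinitely many} linearly independent classes, I would let the underlying configuration vary over an infinite set of distinct $\I_g$-orbits (separated by a Torelli invariant of the curves), and incorporate a low-genus subsurface whose Torelli group has infinitely generated first homology; for genus~$2$ this is exactly Mess's theorem that $\I_2$ is an infinitely generated free group, so that $H_1(\I_2)$ is free abelian of infinite rank. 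By the Künneth formula these combine with the twist cycles to yield infinitely many classes lying in distinct direct summands of $E^1_{p,q}$, hence manifestly independent on the $E^1$-page.

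The main obstacle is survival to $E^\infty$: I must show that these classes are neither boundaries nor supports of nontrivial differentials. Many of the potential differentials should vanish for dimension reasons, since the stabilizers of the neighbouring cells cut the surface into pieces of controlled complexity and therefore have bounded~$\cd$, forcing the relevant entries of the $E^1$-page to vanish in the pertinent bidegrees. The remaining differentials, together with the genuine linear independence of the surviving classes in $H_k(\I_g)$ (as opposed to merely on~$E^1$), are the crux, and I would handle them by an explicit detection argument: restrict the whole spectral sequence to the Torelli group of an embedded subsurface carrying the support of the cycle, where the chosen abelian cycles are detected directly while every possible source restricts to zero. Once survival is established, the infinite-rank free abelian subgroup of $E^\infty_{p,q}$ injects into the associated graded of $H_k(\I_g)$, and lifting it produces a free abelian subgroup of infinite rank; for $k=2g-3$ the detecting classes are literally abelian cycles, which yields the sharper explicit statement mentioned in the abstract.
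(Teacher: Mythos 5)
Your overall framework---the Cartan--Leray spectral sequence for $\I_g$ acting on $\B_g$, explicit twist-built classes varying over infinitely many orbits, and survival at a single position $(p,q)$---is indeed the paper's framework, but both mechanisms you propose for the crucial step (survival to $E^\infty$ and detection) fail, and this is exactly where the paper's real work lies. First, vanishing ``for dimension reasons'' is false in the original spectral sequence: since $\cd\bigl(\Stab_{\I_g}(M)\bigr)\le 3g-5-\dim P_M$ for every cell, the only vanishing available on the $E^1$-page is $E^1_{p,q}=0$ for $p+q>3g-5$; the differentials entering a position of total degree $k\le 3g-6$ originate at total degree $k+1\le 3g-5$, so nothing forces them to vanish (this is precisely why Bestvina--Bux--Margalit could settle only $k=3g-5$ and left $k\le 3g-6$ open). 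Second, ``restricting the spectral sequence to the Torelli group of an embedded subsurface'' has the arrows pointing the wrong way: in group homology there is no restriction map to an infinite-index subgroup, and functoriality of the Cartan--Leray spectral sequence along an inclusion of groups produces a map \emph{into} the spectral sequence of $\I_g$, which can neither kill unwanted sources nor detect your classes. The paper's key construction, absent from your proposal, produces maps \emph{out of} the spectral sequence: for a perfect $\I_g$-orbit $\fN$ of oriented multicurves spanning $n$-cells, one forms the quotient of $C_*(\B_g)$ by the subcomplex $K_*(\fN)$ spanned by all cells whose multicurve contains no representative of $\fN$, identifies this quotient (via Fact~\ref{fact_ind}) with a complex induced from $\Stab_{\I_g}(N)$, and then pushes forward along the quotient homomorphism $\Stab_{\I_g}(N)\to H_N=\Stab_{\I_g}(N)/\BP(N)$. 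In the resulting auxiliary spectral sequence $\hE^*_{*,*}(\fN)$ one has $\hE^1_{p,q}(\fN)=0$ both for $p<n$ and for $p+q>3g-5-n$, so the position $(n,3g-5-2n)$ is untouched by every differential, and the composite morphism $E^*_{*,*}\to\hE^*_{*,*}(\fN)$ sends the classes built on the orbit $\fN$ to linearly independent elements while annihilating all classes built on other orbits. Without some such outward-pointing detection device, independence on $E^1$ proves nothing about $H_k(\I_g)$.

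There is also a gap in your ingredients. Mess's theorem supplies infinite rank only in degree one and only for genus two (by Johnson, $H_1(\I_h)$ is finitely generated for $h\ge 3$). Since the Torelli group of the genus-two subsurface already contains a nontrivial twist commuting with everything disjoint from it, Vautaw's bound $2g-3$ forces any commuting system of separating and bounding-pair twists disjoint from that subsurface to have at most $2g-4$ members; hence twists tensored with $H_1$ of the genus-two piece reach total degree at most $2g-3$, and for $2g-2\le k\le 3g-6$ (a nonempty range once $g\ge 4$) your classes cannot even be written down. The paper instead inserts the full Bestvina--Bux--Margalit infinite-rank free abelian subgroup of the \emph{top} homology $H_{3g'-2}(\I_{g',1})$ of a genus $g'=g-n-1$ subsurface with boundary (Proposition~\ref{propos_BBM}), a strictly stronger input than Mess, needed for every $g'$ between $2$ and $g-2$. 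Finally, your parenthetical suggestion for $k=2g-3$---an abelian cycle of $2g-3$ disjoint separating twists---is a trap: all these twists stabilize a vertex of $\B_g$, so the class lies in $\CF_{0,2g-3}$ and would have to be detected in $E^\infty_{0,2g-3}$, i.e., in the quotient of $E^1_{0,2g-3}$ by the images of all higher differentials, which is exactly the computation the paper is engineered to avoid. The paper's $k=2g-3$ classes instead mix $g-2$ bounding-pair twists, which translate a $(g-2)$-cell $P_N$, with $g-1$ separating twists, which stabilize it, so that Lemma~\ref{propos_spectral} places them in $\CF_{g-2,g-1}$, where the auxiliary spectral sequence argument applies.
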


Recall the definition of an \textit{abelian cycle}. Let $h_1,\ldots,h_k$ be pairwise commuting elements of a group~$G$. Consider the homomorphism
$\chi\colon\Z^k\to G$ that sends the generator of the $i$\textsuperscript{th} factor~$\Z$ to~$h_i$ for every~$i$. We denote by~$\CA(h_1,\ldots,h_k)$ the image of the standard generator~$\mu_k$ of the group~$\HH_k(\Z^k)\cong\Z$ under the homomorphism
$
\chi_*:\HH_k(\Z^k)\to \HH_k(G)
$.
Homology classes $\CA(h_1,\ldots,h_k)$  are called \textit{abelian cycles}. 

Vautaw~\cite{Vau02} proved that the Torelli group~$\I_g$ does not contain a free abelian subgroup of rank greater than~$2g-3$. Since $\I_g$ is torsion-free, this result implies that the group~$\HH_k(\I_g)$ contains no non-trivial abelian cycles, provided that $k>2g-3$. For $k=2g-3$, we can refine Theorem~\ref{theorem_main} in the following way.

\begin{theorem}\label{theorem_Abelian}
Suppose that $g\ge 3$. Then the group $\HH_{2g-3}(\I_g)$ contains a free abelian subgroup of infinite rank  whose generators are abelian cycles.
\end{theorem}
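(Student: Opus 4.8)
The plan is to construct explicitly an infinite family of abelian cycles in $H_{2g-3}(\I_g)$ and prove their linear independence using the Cartan--Leray spectral sequence for the action of $\I_g$ on the complex of cycles $\B_g$. The dimension $2g-3$ is precisely Vautaw's bound on the rank of a maximal abelian subgroup of $\I_g$, so the abelian cycles $\CA(h_1,\ldots,h_{2g-3})$ must come from a maximal collection of pairwise commuting, linearly independent elements. The natural candidates are \emph{bounding pair maps} and \emph{separating twists} supported on disjoint subsurfaces: given a suitable collection of disjoint simple closed curves (or bounding pairs) $\{c_1,\ldots,c_{2g-3}\}$ cutting $S_g$ into pieces, the corresponding twists $T_{c_1},\ldots,T_{c_{2g-3}}$ lie in $\I_g$, commute pairwise, and generate a free abelian group of the maximal rank $2g-3$. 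The first step is therefore to fix a combinatorial model producing infinitely many such maximal multicurves that are genuinely distinct, and to record the associated abelian cycles $\CA(T_{c_1},\ldots,T_{c_{2g-3}})$.

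\emph{Next I would} set up the Cartan--Leray spectral sequence $E^2_{p,q}=H_p(\I_g; \mathcal{H}_q)$ converging to $H_{p+q}(\I_g)$, where $\mathcal{H}_q$ is the coefficient system given by the homology of cell stabilizers acting on $\B_g$. Since $\B_g$ is contractible and $\I_g$ acts on it cellularly, this spectral sequence computes $H_*(\I_g)$ from the equivariant cellular chain complex. The key structural input, already used by Bestvina--Bux--Margalit to analyze the top dimension $3g-5$, is that the cell stabilizers are themselves understood (they are commensurable with direct products of mapping class groups of the complementary subsurfaces), so the $E^1$ or $E^2$ page in low total degree near $2g-3$ admits an explicit description. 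I would locate the abelian cycles as classes living on the bottom row or a controlled diagonal, where the differentials into and out of their position can be shown to vanish or to respect the splitting into infinitely many cells.

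\emph{The main obstacle} will be proving \textbf{linear independence} of the infinitely many constructed abelian cycles inside $H_{2g-3}(\I_g)$, rather than merely exhibiting nontriviality of finitely many of them. Abelian cycles are easy to write down but notoriously prone to collapsing, being equal, or satisfying unexpected relations in homology, so the heart of the argument is to distinguish them. The plan is to detect each abelian cycle by pairing it against a cohomology class, or equivalently by reading off its image under an edge homomorphism or projection associated with a single orbit of cells in $\B_g$: for a judiciously chosen maximal multicurve, the corresponding cell contributes a copy of $\Z$ to the relevant spectral sequence term, and the abelian cycle maps to a generator while all but finitely many of the other cycles in the family map to zero. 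Making this precise requires (i) verifying that the relevant spectral sequence differentials vanish so that the $E^\infty$ term injects as claimed, and (ii) checking that the stabilizer homology carries the abelian class faithfully. I would organize the curves into an infinite family indexed so that a suitable topological invariant (for instance the isotopy class of a distinguished separating curve in the configuration) separates them, guaranteeing that the detecting functionals are simultaneously nonzero on distinct members, which yields a free abelian subgroup of infinite rank generated by abelian cycles as required.
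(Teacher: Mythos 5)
Your outline has the same skeleton as the paper's actual proof: the generators are abelian cycles
$A_{\CU,\bb}=\CA\bigl(T_{\beta_1}T_{\beta_1'}^{-1},\ldots,T_{\beta_{g-2}}T_{\beta_{g-2}'}^{-1},T_{\delta_1},\ldots,T_{\delta_{g-1}}\bigr)$
built from $g-2$ bounding-pair maps and $g-1$ separating twists, indexed by an infinite family of configurations (in the paper, by splittings of $H_1(S_g)$ into orthogonal rank-two summands), and independence is detected through the Cartan--Leray spectral sequence for $\I_g$ acting on~$\B_g$, one orbit of cells at a time. Two corrections of detail before the main point. First, twists $T_{c_i}$ about the curves themselves lie in $\I_g$ only when the curves separate, so the commuting family must mix separating twists with bounding-pair maps, as above. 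Second, the relevant cell of $\B_g$ is \emph{not} spanned by the curves supporting these twists: separating curves are invisible in $\B_g$, and the cell is $P_N$ for a multicurve $N$ of $2g-2$ nonseparating curves $\alpha_i,\alpha_i'$ crossed by the $\beta_i$'s. The class then sits in position $(p,q)=(g-2,g-1)$, not on the bottom row or any obvious diagonal: the $g-2$ bounding-pair maps translate the facets of the cube-shaped cell $P_N$ while the $g-1$ separating twists stabilize it, and Lemma~\ref{propos_spectral} converts exactly this structure into membership in $\CF_{g-2,g-1}$ together with an explicit representative $\iota_{P_N}(u)$ of the image in $E^{\infty}_{g-2,g-1}$.

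The genuine gap is your step (i), and it is precisely the obstacle the paper is engineered to circumvent rather than verify. In the Cartan--Leray spectral sequence $E^*_{*,*}$ itself, the differentials into position $(g-2,g-1)$, namely $d^r\colon E^r_{g-2+r,g-r}\to E^r_{g-2,g-1}$, have sources of total degree $2g-2\le 3g-5$ for $g\ge 3$; these are built from homology of stabilizers of higher-dimensional cells, which do \emph{not} vanish for dimension reasons, and nobody knows how to compute these differentials --- this is exactly why the approach suggested in~\cite{BBM07} had remained unrealized. So a nontrivial combination of your representing elements in $E^1_{g-2,g-1}$ could a priori be a boundary of a higher differential, destroying independence in $E^{\infty}$; there is nothing to ``verify'' here by inspection. (Note also that your detecting map is only defined on the filtration step $\CF_{g-2,g-1}$, not on all of $H_{2g-3}(\I_g)$, so it is not given by pairing with a cohomology class.) What the paper does instead is map $E^*_{*,*}$ to an auxiliary spectral sequence $\hE^*_{*,*}(\fN)$, one for each (\emph{perfect}) $\I_g$-orbit $\fN$ of the multicurves $N$, constructed in two steps: first pass to the quotient of $C_*(\B_g)$ supported on cells containing a multicurve of~$\fN$, and second --- this is the move your proposal lacks --- replace $\Stab_{\I_g}(N)$ by its quotient $H_N=\Stab_{\I_g}(N)/\BP(N)$ by the free abelian group of bounding-pair twists along~$N$. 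It is this quotient that lowers the cohomological dimension of cell stabilizers from about $3g-5-m$ down to $2g-3-m$ (Proposition~\ref{propos_HM}), whence $\hE^1_{p,q}(\fN)=0$ whenever $p+q>2g-3$ or $p<g-2$; only after this is the position $(g-2,g-1)$ isolated by dimension reasons, giving $\hE^{\infty}_{g-2,g-1}(\fN)=\hE^1_{g-2,g-1}(\fN)$ and a well-defined detecting homomorphism $\Phi_{\fN}$. Without this quotient (or some substitute for it), your edge-homomorphism argument cannot be completed. A smaller inaccuracy: for a fixed orbit $\fN$ it is not true that all but finitely many of the cycles die under $\Phi_{\fN}$ --- infinitely many splittings (all the shifts $\CU[\bk]$) are compatible with the same $\fN$ --- but their images $\theta_{\CU}$ are distinct members of a basis of the free abelian group $\hE^1_{g-2,g-1}(\fN)$ (Proposition~\ref{propos_hE_basis}), which is what makes the orbit-by-orbit detection conclude.
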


\begin{remark}
For $g=3$, Theorem~\ref{theorem_main} exactly recovers the already mentioned result by Johnson and Millson (unpublished, see~\cite{Mes92}) that $\HH_3(\I_3)$ contains a free abelian subgroup of infinite rank. Nevertheless, Theorem~\ref{theorem_Abelian} seems to be a new result even for genus~$3$. For each pair $(g,k)$ such that  $g\ge 4$ and $2g-3\le k\le 3g-6$, the assertion of Theorem~\ref{theorem_main} is a new result.
\end{remark}

\begin{remark}
Our proof of Theorem~\ref{theorem_Abelian} is constructive, which means that we will construct explicitly an infinite set of linearly independent abelian cycles in $\HH_{2g-3}(\I_g)$. (Recall that elements of an abelian group are \textit{linearly independent} if they form a basis for a free abelian subgroup of this group.) Note also that this construction does not use the result of Bestvina, Bux, and Margalit~\cite{BBM07} claiming that the group $\HH_{3g-5}(\I_g)$ is infinitely generated. On the contrary, our construction of an infinite set of linearly independent homology classes in $\HH_{k}(\I_g)$, where $2g-3<k<3g-5$, is in a sense a mixture of our construction of an infinite set of linearly independent abelian cycles in~$\HH_{2g-3}(\I_g)$ and the Bestvina--Bux--Margalit description of an infinite set of linearly independent homology classes in~$\HH_{3g-5}(\I_g)$. It is a debatable question whether the latter description can be regarded as an explicit construction. Our proof of Theorem~\ref{theorem_main} is constructive modulo this description.
\end{remark}

Bestvina, Bux, and Margalit~\cite{BBM07} obtained their results on the cohomological dimension of~$\I_g$ and on the top homology subgroup~$\HH_{3g-5}(\I_g)$ using a spectral sequence~$E^*_{*,*}$ for the action of~$\I_g$ on a new contractible CW complex~$\B_g$, which they call a \textit{complex of cycles}. Recall that this spectral sequence converges to the homology of~$\I_g$, see Section~\ref{section_CL} for details.
Our proofs of Theorems~\ref{theorem_main} and~\ref{theorem_Abelian} are based on a deeper  study of the same spectral sequence.

\begin{remark}
Bestvina, Bux, and Margalit asked explicitly whether the groups $\HH_k(\I_g)$ are infinitely generated whenever $2g-3\le k\le 3g-6$ (cf. Question~8.1 in~\cite{BBM07}), and suggested a possible approach towards obtaining the affirmative answer to this question. Though our main result (Theorem~\ref{theorem_main}) is exactly the answer to this question, our proof does not follow the approach suggested in~\cite{BBM07}. Namely, Bestvina, Bux, and Margalit showed that the term~$E^1_{0,k}$ of the spectral sequence for the action of~$\I_g$ on~$\B_g$ is infinitely generated, provided that $2g-3\le k\le 3g-6$, and concluded that if one could prove that this group remains infinitely generated after taking consecutive quotients by the images of the differentials $d^1,\ldots,d^{k+1}$, then he would obtain that  $\HH_k(\I_g)$ is infinitely generated, since $E^{\infty}_{0,k}=E^{k+2}_{0,k}$ injects into~$\HH_k(\I_g)$. This plan seems to be very hard to realize because it requires computation of higher differentials of the spectral sequence. Instead, we show that every group~$E^{\infty}_{n,\,3g-5-2n}$, where $1\le n\le g-2$, contains a free abelian subgroup of infinite rank. This also implies that  $\HH_k(\I_g)$ contains a free abelian subgroup of infinite rank for $2g-3\le k\le 3g-6$, since the groups~$E^{\infty}_{p,\,k-p}$, where $p=0,\ldots,k$, are consecutive quotients for certain filtration in~$\HH_k(\I_g)$. The crucial point in our approach is that we avoid explicit computation of higher differentials by mapping the  spectral sequence~$E^*_{*,*}$ to certain auxiliary spectral sequences~$\hE^*_{*,*}(\fN)$ in which the corresponding differentials are trivial by dimension reasons.
\end{remark}

\begin{remark}
 Since the appearance of the first preprint version of this paper, a number of other results have been obtained by means of a detailed study of the spectral sequences for the actions of the Torelli group~$\I_g$ and its subgroups on the complex of cycles~$\B_g$, namely, results on the top homology group of the Johnson kernel~$\mathcal{K}_g$ (see~\cite{Gai22}, \cite{Spi21}) and results on the homology of the genus~$3$ Torelli group~$\I_3$ (see~\cite{Gai21}, \cite{Spi22}).
\end{remark}

Kirby's list of problems in low-dimensional topology~\cite{Kir97} contains the following Problem~2.9(B) attributed to Mess: \textit{Given~$g$, what is the largest~$k$ for which $\I_g$ admits a classifying space with finite $k$-skeleton}. (This is also Problem~5.11 in~\cite{Far06}.) The best previously known estimate $k\le 3g-6$ was obtained by Bestvina, Bux, and Margalit~\cite{BBM07}. The following is a direct consequence of Theorem~\ref{theorem_main}.

\begin{cor}
If $\I_g$ admits a classifying space with finite $k$-skeleton, then $k\le 2g-4$.
\end{cor}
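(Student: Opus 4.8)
The plan is to derive the corollary formally from Theorem~\ref{theorem_main} via the standard relationship between finiteness of the skeleta of a classifying space and finite generation of homology. First I would record the following elementary fact: if a CW-complex~$X$ has a finite $k$-skeleton, then its integral homology groups $H_j(X)$ are finitely generated for all $j\le k$. To see this, pass to the cellular chain complex $C_*(X)$. For every $j\le k$ the group $C_j(X)$ is free abelian of finite rank, since $X$ has only finitely many cells of each such dimension. Hence the cycle group $Z_j=\ker\partial_j$, being a subgroup of the finitely generated free abelian group $C_j(X)$, is itself finitely generated, and therefore so is its quotient $H_j(X)=Z_j/\mathrm{im}\,\partial_{j+1}$.

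Next I would apply this fact to a classifying space. Suppose $\I_g$ admits a $K(\I_g,1)$ with finite $k$-skeleton, and realize it by such a CW-complex~$X$. Then the remark above gives that $H_j(\I_g)=H_j(X)$ is finitely generated for every $j\le k$. On the other hand, for every $g\ge 3$ the value $2g-3$ lies in the range covered by Theorem~\ref{theorem_main}, since $2g-3\le 3g-6$ is equivalent to $g\ge 3$; consequently $H_{2g-3}(\I_g)$ is \emph{not} finitely generated. Were $k\ge 2g-3$, the two conclusions would contradict each other at $j=2g-3$. Therefore $k\le 2g-4$, as claimed.

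I do not anticipate any real obstacle, as the statement is a purely formal consequence of Theorem~\ref{theorem_main} together with a textbook finiteness property. The only place that warrants a moment's care is the borderline degree $j=k$ in the homological lemma: there one cannot argue through finite generation of $C_{k+1}(X)$, which may well be infinite, but must instead use that $H_k(X)$ is a quotient of the finitely generated cycle subgroup $Z_k\subseteq C_k(X)$.
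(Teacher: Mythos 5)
Your proof is correct and is exactly the argument the paper intends: the corollary is stated there as a direct consequence of Theorem~\ref{theorem_main}, with the standard fact (finite $k$-skeleton $\Rightarrow$ $H_j$ finitely generated for $j\le k$) left implicit, and you apply it precisely at $j=2g-3$, which lies in the theorem's range for $g\ge 3$. Your care with the borderline degree $j=k$ (arguing via the cycle subgroup $Z_k$ rather than via $C_{k+1}$) is the right way to handle it.
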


Recall that the \textit{extended Torelli group} of genus~$g$ is the subgroup $\widehat{\I}_g\subset\Mod(S_g)$ consisting of all mapping classes that act on~$\HH_1(S_g)$ either trivially or by multiplication by~$-1$. Obviously, $\widehat{\I}_g/\I_g\cong\Z/2\Z$, which yields the action of the group~$\Z/2\Z$ on the homology of~$\I_g$. If we endow~$S_g$ with the structure of a hyperelliptic complex curve, then the hyperelliptic involution~$s$ will be an element of~$\widehat{\I}_g$ not belonging to~$\I_g$. Hence the action of~$\Z/2\Z$ on~$\HH_*(\I_g)$ is exactly the action of the hyperelliptic involution. If $2$ is invertible in the coefficient ring~$R$, then we obtain the splitting
$$
\HH_*(\I_g;R)=\HH_*(\I_g;R)^+\oplus \HH_*(\I_g;R)^-,
$$
where the generator of~$\Z/2\Z$ acts trivially on~$\HH_*(\I_g;R)^+$ and acts as~$-1$ on~$\HH_*(\I_g;R)^-$.

Hain asked in~\cite[Problem~4.2]{Hai06} whether or not $\HH_*(\I_g;\Z[1/2])^-$ is always a finitely generated $\Z[1/2]$-module. The interest to this question is evoked by the following geometric interpretation of it. Consider the \textit{Torelli space}~$\T_g$, i.\,e., the quotient of the Teichm\"uller space by the (free) action of~$\I_g$. Then $\T_g$ is a~$K(\I_g,1)$, hence, $\HH_*(\T_g;R)=\HH_*(\I_g;R)$. The space~$\T_g$ is the moduli space of compact smooth genus~$g$ complex curves~$C$ together with a symplectic basis in $\HH_1(C)$. Therefore we have the period mapping $\mathcal{P}\colon \T_g\to \h_g$, where $\h_g$ is the upper Siegel half-space. Let $\J_g$ be the image of~$\mathcal{P}$, i.\,e., the set of framed jacobians of smooth genus~$g$ curves. It is well known that for $g\ge 3$, the mapping $\mathcal{P}\colon \T_g\to \J_g$ is two-to-one, branched along the locus of hyperelliptic curves. Let $\sigma$ be the involution on~$\T_g$ interchanging the pre-images of every point in~$\J_g$. It is easy to see that the action of~$\sigma$ on $\HH_*(\T_g;R)$ coincides with the above action of the generator of~$\Z/2\Z=\widehat{\I}_g/\I_g$ on $\HH_*(\I_g;R)$. The mapping $\mathcal{P}$ induces an isomorphism
$$
\HH_*(\I_g;\Z[1/2])^+=\HH_*(\T_g;\Z[1/2])^+\cong  \HH_*(\J_g;\Z[1/2]),
$$
see~\cite[Proposition~5]{Hai02}.
So an equivalent formulation of the above question by Hain is whether the infinite topology of~$\T_g$ localized away from~$2$ entirely comes from~$\J_g$.

It is not hard to show (see Corollary~\ref{cor_invariant}) that the images in $\HH_{3g-5}(\I_g;\Z[1/2])$ of the homology classes constructed by Bestvina, Bux, and Margalit~\cite{BBM07} lie in $\HH_{3g-5}(\I_g;\Z[1/2])^+$. So until now the question by Hain has remained completely open. We give the following negative answer to it.

\begin{theorem}\label{theorem_pm}
 Suppose that $g\ge 3$ and $2g-3\le k\le 3g-6$. 
\begin{enumerate}
\item If $g+k$ is even, then the group~$\HH_k(\I_g;\Z[1/2])^-$ contains an infinitely generated free $\Z[1/2]$-module.
\item If $g+k$ is odd, then the group $\HH_k(\J_g;\Z[1/2])\cong \HH_k(\I_g;\Z[1/2])^+$ contains an infinitely generated free $\Z[1/2]$-module.
\end{enumerate}
\end{theorem}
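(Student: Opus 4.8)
The plan is to feed the infinite family of linearly independent classes produced in the proofs of Theorems~\ref{theorem_main} and~\ref{theorem_Abelian} into the $\pm$-splitting, by computing on them the action of the hyperelliptic involution~$s$, regarded as the generator of $\widehat{\I}_g/\I_g\cong\Z/2\Z$ acting on $H_*(\I_g)$ by conjugation. First I would arrange all the geometric data entering the construction to be invariant under a single fixed hyperelliptic involution~$s$: the primitive class $x\in H_1(S_g)$, the oriented multicurves representing it, the cells of~$\B_g$, and the pairwise commuting bounding pair maps and separating twists that generate the relevant cell stabilizers. Since $s$ is orientation preserving, it normalizes~$\I_g$ and acts cellularly on~$\B_g$, hence it acts on the whole Cartan--Leray spectral sequence $E^*_{*,*}$ compatibly with its action on the abutment; with symmetric data the constructed classes become eigenvectors of~$s_*$.

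The core of the argument is the computation of the eigenvalue, which I claim is $(-1)^{g+k+1}$ for every member of the family. The abelian cycle $\CA(h_1,\dots,h_k)$ is natural, so if conjugation by~$s$ carries the commuting tuple $(h_1,\dots,h_k)$ to a tuple expressible through the same commuting elements by a matrix $M\in\mathrm{GL}_k(\Z)$, then $s_*\CA(h_1,\dots,h_k)=\det(M)\,\CA(h_1,\dots,h_k)$. A bounding pair map $T_\gamma T_\delta^{-1}$ whose two curves are interchanged by~$s$ is inverted, $s(T_\gamma T_\delta^{-1})s^{-1}=(T_\gamma T_\delta^{-1})^{-1}$, whereas a separating twist about an $s$-invariant curve is fixed. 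Hence in the pure abelian cycle case $k=2g-3$ of Theorem~\ref{theorem_Abelian} the eigenvalue equals $(-1)^{a}$, where $a$ is the number of inverted bounding pair maps among the generators; the symmetric configuration is chosen so that $a\equiv g+k+1\pmod 2$.

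For the intermediate dimensions $2g-3<k<3g-5$ the class is detected in $E^\infty_{p,q}$ with $p=3g-5-k$ and $q=2k-(3g-5)$, and its eigenvalue is a product of two contributions: the sign by which $s$ acts on the orientation module $\Z_\sigma$ of the base cell~$\sigma$, and the determinant sign of its conjugation action on the rank-$q$ abelian cycle in the stabilizer $H_q(\Stab(\sigma))$. I would check that this product is again $(-1)^{g+k+1}$; this is consistent with Corollary~\ref{cor_invariant} at the boundary value $k=3g-5$, where $g+k+1=4g-4$ is even and the Bestvina--Bux--Margalit class is $s$-invariant. Granting the eigenvalue, the conclusion is formal: the family spans a free abelian subgroup $A\subset H_k(\I_g)$ of infinite rank, and since $\Z[1/2]$ is flat over~$\Z$ the induced map $A\otimes\Z[1/2]\to H_k(\I_g;\Z[1/2])$ is injective with image an infinitely generated free $\Z[1/2]$-module contained in the $(-1)^{g+k+1}$-eigenspace of~$s$. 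When $g+k$ is even this eigenspace is $H_k(\I_g;\Z[1/2])^-$, giving~(1); when $g+k$ is odd it is $H_k(\I_g;\Z[1/2])^+$, which the period mapping~$\mathcal{P}$ identifies with $H_k(\J_g;\Z[1/2])$ as explained above, giving~(2).

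The main obstacle will be the eigenvalue computation in the intermediate range, and specifically the bookkeeping of the orientation module $\Z_\sigma$ in the base direction against the determinant sign in the fiber direction. The delicate point is that the answer depends on the parity of $g+k$ rather than of~$k$ alone: naively inverting every generator would only produce $(-1)^k$, so the extra $g$-dependent sign must be extracted carefully from the orientation behaviour of~$s$ on the cells of~$\B_g$ (equivalently, from the precise number of bounding pair maps that $s$ interchanges in the chosen symmetric configuration), and one must verify that this sign is the same for all infinitely many classes in the family.
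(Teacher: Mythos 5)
Your overall strategy --- fix one hyperelliptic-type involution $s$, make the constructed classes eigenvectors of $s_*$, compute the eigenvalue to be $(-1)^{g+k+1}$, and conclude formally after inverting $2$ --- is exactly the paper's, and your treatment of the case $k=2g-3$ coincides with Proposition~\ref{propos_pm_abelian}: the rotation $s$ fixes each $T_{\delta_i}$ and inverts each of the $g-2$ bounding pair maps $T_{\beta_i}T_{\beta_i'}^{-1}$, giving the eigenvalue $(-1)^{g-2}=(-1)^{g+k+1}$. The claimed eigenvalue and the formal localization step are correct throughout.

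In the intermediate range $2g-3<k\le 3g-6$, however, your eigenvalue computation has a genuine gap. The classes of Theorem~\ref{theorem_explicit} are not abelian cycles, and the class sitting in the fiber direction is not ``a rank-$q$ abelian cycle in $H_q(\Stab(\sigma))$'': it is the generalized abelian cycle $\CA\bigl(T_{\delta_1},\ldots,T_{\delta_n};\psi_*(\xi_{\lambda})\bigr)$, whose essential factor $\xi_{\lambda}\in H_{3g'-2}(\I_{g',1})$, $g'=g-n-1$, is a top-dimensional Bestvina--Bux--Margalit class. (By Vautaw's theorem cited in the paper, nontrivial abelian cycles cannot even exist in these degrees, so no choice of ``symmetric commuting generators'' is available.) Your determinant trick computes the sign contributed by the $2n$ commuting twists and bounding pair maps, but it says nothing about $s_*\psi_*(\xi_{\lambda})$. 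Pinning that down is precisely the content of Proposition~\ref{propos_equivariant} and Corollary~\ref{cor_invariant}: one must take $\{\xi_{\lambda}\}$ to be the BBM system specifically and prove that every isomorphism and injection in the BBM induction~\eqref{eq_sequence} is $\Z/2\Z$-equivariant (by tracking the involution through the Hochschild--Serre spectral sequences of the relevant extensions), with the base case that $\widehat{\I}_2/\I_2$ acts trivially on $H_1(\I_2)$. You invoke Corollary~\ref{cor_invariant} only as a consistency check at the boundary value $k=3g-5$, but it is the crux: without it (or an equivalent argument) you cannot determine in which eigenspace your classes land, so cases (1) and (2) of the theorem cannot be distinguished. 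A secondary inaccuracy: $s$ does not act on~$\B_g$, since it takes cycles for~$x$ to cycles for~$-x$; one would have to compose with reversal of all curve orientations to get an action, or better, avoid the spectral sequence altogether --- the paper computes $s_*$ directly in $H_k(\I_g)$ by naturality of generalized abelian cycles, with no orientation-module bookkeeping, using the spectral sequence only for linear independence, which you are already granting.
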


This paper is organized as follows. In Section~\ref{section_explicit} we describe explicitly an infinite linearly independent system of abelian cycles in~$\HH_{2g-3}(\I_g)$ (Theorem~\ref{theorem_Abelian_explicit}) and an infinite linearly independent system of homology classes in~$\HH_{3g-5-n}(\I_g)$ for every $n=1,\ldots,g-3$ (Theorem~\ref{theorem_explicit}). Also in  Section~\ref{section_explicit}, we show that  these homology classes localized away from~$2$ lie in~$\HH_k(\I_g;\Z[1/2])^-$ whenever $g+k$ is even and in~$\HH_k(\I_g;\Z[1/2])^+$ whenever $g+k$ is odd.
The rest part of the paper contains the proof that the constructed systems of homology classes are indeed linearly independent. 
The main tool is the spectral sequence for the action of~$\I_g$ on the complex of cycles~$\B_g$.
Sections~\ref{section_cycles} and~\ref{section_CL} contain necessary background information on the complex of cycles~$\B_g$ and on the spectral sequence for the action of a group on a CW complex, respectively. In Section~\ref{section_lemma} we prove a useful lemma explaining how generalized abelian cycles look like from the viewpoint of this spectral sequence.
In the last five sections we study the spectral sequence~$E^*_{*,*}$ for the action of~$\I_g$ on~$\B_g$.
In Section~\ref{section_aux_spectral} we construct auxiliary spectral sequences~$E^*_{*,*}(\fN)$ and~$\hE^*_{*,*}(\fN)$ and morphisms of spectral sequences 
$$
E^*_{*,*}\to E^*_{*,*}(\fN)\to \hE^*_{*,*}(\fN).
$$
This construction is needed to avoid explicit computation of higher differentials of the spectral sequence~$E^*_{*,*}$. The auxiliary spectral sequences~$E^*_{*,*}(\fN)$ and~$\hE^*_{*,*}(\fN)$ depend on the choice of an $\I_g$-orbit~$\fN$ of oriented multicurves on~$S_g$. We will need to study these spectral sequences for certain special oriented multicurves, which we call $\alpha$-\textit{multicurves}. In Section~\ref{section_alpha} we define  $\alpha$-multicurves and prove some their properties. In Section~\ref{section_alpha_E} we study the properties of the spectral sequence~$\hE^*_{*,*}(\fN)$ in the case when $\fN$ is an $\I_g$-orbit consisting of $\alpha$-multicurves. Finally, in Sections~\ref{section_Abelian_proof} and~\ref{section_main_proof} we prove Theorems~\ref{theorem_Abelian_explicit} and~\ref{theorem_explicit}, respectively.

\smallskip

The author is grateful to S.~I.~Adian and A.~L.~Talambutsa for fruitful discussions.

\section{Systems of linearly independent homology classes}\label{section_explicit}

A \textit{multicurve} on the surface~$S_g$ is a union of a finite number of pairwise disjoint simple closed curves $M=\gamma_1\cup\cdots\cup\gamma_k$ such that no curve~$\gamma_i$ is homotopic to a point and no two curves~$\gamma_i$ and~$\gamma_j$ are homotopic to each other. A multicurve~$M$ is said to be \textit{oriented} if all components of~$M$ are endowed with orientations. (Note that, by definition, an oriented multicurve is not allowed to contain a pair of components that become homotopic to each other after reversing the orientation of one of them.)

Most constructions in the present paper concerning curves and multicurves are determined up to isotopy. For the sake of simplicity, we will later not distinguish between notation for a curve (or a multicurve) and its isotopy class. For instance, the subgroup of~$\I_g$ consisting of all mapping classes~$h$ that stabilize the isotopy class of a multicurve~$M$  will be denoted by $\Stab_{\I_g}(M)$ and will be called the stabilizer of the multicurve~$M$.

We denote by $[\alpha]$ the integral homology class of an oriented simple closed curve~$\alpha$. We denote by $a\cdot b$ the algebraic intersection index of homology classes $a,b\in \HH_1(S_g)$. We denote by $\langle c_1,\ldots,c_k\rangle$ the subgroup of~$\HH_1(S_g)$ generated by homology classes $c_1,\ldots,c_k$.  We denote by~$T_{\alpha}$ the left Dehn twist about a simple closed curve~$\alpha$. A \textit{bounding pair} is a pair~$\{\alpha,\alpha'\}$ of disjoint non-separating not homotopic to each other simple closed curves in the same homology class. The mapping class $T_{\alpha}T_{\alpha'}^{-1}$ will be called a \textit{bounding pair map} or simply a \textit{BP map}. Recall that Dehn twists about separating simple closed curves and BP maps belong to the Torelli group.

We choose a primitive element $x\in \HH_1(S_g)$ and fix it throughout the whole paper.

Further we always suppose that $g\ge 2$.

\subsection{System of linearly independent abelian cycles}\label{subsection_A}
Consider a  splitting
\begin{equation}\label{eq_splitting}
\HH_1(S_g)=U_0\oplus\cdots\oplus U_{g-1},
\end{equation}
where every~$U_j$ has rank~$2$ and $U_i$ is orthogonal to~$U_j$ with respect to the intersection form unless~$i=j$. Note that these conditions immediately imply that the restriction of the intersection form to each~$U_i$ has the matrix 
$$
\left(
\begin{aligned}
0&&1&\\-1&&0&
\end{aligned}
\right)
$$
in a suitable basis.

For each splitting of the form~\eqref{eq_splitting}, there is a unique decomposition
\begin{equation}\label{eq_x_decompose}
x=x_0+\cdots+x_{g-1},\qquad x_i\in U_i.
\end{equation}
Let $\CS$ be the set of all  splittings of the form~\eqref{eq_splitting} such that all summands~$x_i$ in the corresponding decomposition~\eqref{eq_x_decompose} are nonzero.  Splittings obtained from each other by permutations of summands are regarded to be different, i.\,e., a splitting is an ordered $g$-tuple $\CU=(U_0,\ldots,U_{g-1})$. 

The following proposition is straightforward.

\begin{propos}\label{propos_infinite1}
The set~$\CS$ is infinite.
\end{propos}

Consider a splitting $\CU=(U_0,\ldots,U_{g-1})$ belonging to~$\CS$. Every summand in the decomposition~\eqref{eq_x_decompose} can be uniquely written as $x_i=l_ia_i$, where $a_i$ is primitive and~$l_i>0$. Then $a_0,\ldots,a_{g-1}$ is a basis of a \textit{Lagrangian subspace} of~$\HH_1(S_g)$, i.\,e., a rank $g$ direct summand such that the intersection form restricts trivially to it. We denote by $\fB(\CU)$ the set of all $(g-2)$-tuples $\bb=(b_1,\ldots,b_{g-2})$ such that $b_i\in U_i$ and $a_i\cdot b_i=1$ for every $i=1,\ldots,g-2$. (Note that we do not  choose elements~$b_0$ and~$b_{g-1}$  in $U_0$ and~$U_{g-1}$.)

\begin{definition}\label{defin_delta}
A \textit{$\delta$-multicurve} for~$\CU$ is a multicurve 
$$
\Delta=\delta_1\cup\cdots\cup\delta_{g-1}
$$
that satisfies the following conditions:
 \begin{enumerate}
 \item $\delta_1,\ldots,\delta_{g-1}$ are separating simple closed curves,
 \item $S_g\setminus\Delta$ consists of $g$ connected components $X_0\ldots,X_{g-1}$ such that
\begin{itemize} 
\item $X_0$ and~$X_{g-1}$ are once-punctured tori adjacent to~$\delta_1$ and to~$\delta_{g-1}$, respectively,
 \item for every $i=1,\ldots,g-2$, the component~$X_i$ is a twice-punctured torus adjacent to~$\delta_{i}$ and~$\delta_{i+1}$,
 \end{itemize}
 \item the image of the homomorphism $\HH_1(X_i)\to \HH_1(S_g)$ induced by the inclusion is~$U_i$.
 \end{enumerate}
\end{definition}

\begin{definition}\label{defin_beta}
A \textit{$\beta$-multicurve} for $\CU$ compatible with a $\delta$-multicurve~$\Delta$ is an oriented multicurve 
$$
B=\beta_1\cup\beta_1'\cup\cdots\cup\beta_{g-2}\cup\beta_{g-2}'
$$
that satisfies the following conditions:
\begin{enumerate}
\item $\beta_i$ and $\beta_i'$ are homologous  to each other oriented simple closed curves contained in~$X_i$,
\item the $(g-2)$-tuple $\bb=\bigl([\beta_1],\ldots,[\beta_{g-2}]\bigr)$ belongs to~$\fB(\CU)$,
\end{enumerate}
see Fig.~\ref{fig_beta_delta}. The union $\Gamma=B\cup\Delta$ will be called a \textit{$\beta\delta$-multicurve} for~$\CU$.
\end{definition}

\begin{figure}
\begin{tikzpicture}[scale=.8]

\draw[mygreen, thick] (1.5,-1.5) arc (270:90:0.3 and 1.5)
node[pos=0,below]{$\delta_1$};
\draw[mygreen, thick, dashed] (1.5,-1.5) arc (-90:90:0.3 and 1.5);

\draw[mygreen, thick] (4.5,-1.5) arc (270:90:0.3 and 1.5)
node[pos=0,below]{$\delta_2$};
\draw[mygreen, thick, dashed] (4.5,-1.5) arc (-90:90:0.3 and 1.5);

\draw[mygreen, thick] (7.5,-1.5) arc (270:90:0.3 and 1.5)
node[pos=0,below]{$\delta_3$};
\draw[mygreen, thick, dashed] (7.5,-1.5) arc (-90:90:0.3 and 1.5);

\draw[mygreen, thick] (10.5,-1.5) arc (270:90:0.3 and 1.5)
node[pos=0,below]{\,\,\,\,\,\,$\delta_{g-2}$};
\draw[mygreen, thick, dashed] (10.5,-1.5) arc (-90:90:0.3 and 1.5);

\draw[mygreen, thick] (13.5,-1.5) arc (270:90:0.3 and 1.5)
node[pos=0,below]{\,\,\,\,\,\,$\delta_{g-1}$};
\draw[mygreen, thick, dashed] (13.5,-1.5) arc (-90:90:0.3 and 1.5);

\draw[mygreen, thick, snake=brace] (-1.4,1.75) -- (1.4,1.75) 
node[pos=.5,above]{$X_0$};
\draw[mygreen, thick, snake=brace] (1.6,1.75) -- (4.4,1.75)
node[pos=.5,above]{$X_1$};
\draw[mygreen, thick, snake=brace] (4.6,1.75) -- (7.4,1.75)
node[pos=.5,above]{$X_2$};
\draw[mygreen, thick, snake=brace] (10.6,1.75) -- (13.4,1.75)
node[pos=.5,above]{$X_{g-2}$};
\draw[mygreen, thick, snake=brace] (13.6,1.75) -- (16.4,1.75)
node[pos=.5,above]{$X_{g-1}$};

\draw[myblue, thick] (3,1.5) arc (90:270:.15 and .55); 
\draw[myblue, thick,->] (3,1.5) arc (90:185:.15 and .55); 
\draw[myblue, thick, dashed] (3,1.5) arc (90:-90:.15 and .55)
node[pos=0.5,right]{\small{${}\!\beta_{1}$}}; 

\draw[myblue, thick] (3,-.4) arc (90:270:.15 and .55); 
\draw[myblue, thick,->] (3,-1.5) arc (270:175:.15 and .55); 
\draw[myblue, thick, dashed] (3,-.4) arc (90:-90:.15 and .55)
node[pos=0.5,right]{\small{${}\!\beta_{1}'$}}; 

\draw[myblue, thick] (6,1.5) arc (90:270:.15 and .55); 
\draw[myblue, thick,->] (6,1.5) arc (90:185:.15 and .55); 
\draw[myblue, thick, dashed] (6,1.5) arc (90:-90:.15 and .55)
node[pos=0.5,right]{\small{${}\!\beta_{2}$}}; 

\draw[myblue, thick] (6,-.4) arc (90:270:.15 and .55); 
\draw[myblue, thick,->] (6,-1.5) arc (270:175:.15 and .55); 
\draw[myblue, thick, dashed] (6,-.4) arc (90:-90:.15 and .55)
node[pos=0.5,right]{\small{${}\!\beta_{2}'$}}; 

\draw[myblue, thick] (12,1.5) arc (90:270:.15 and .55); 
\draw[myblue, thick,->] (12,1.5) arc (90:185:.15 and .55); 
\draw[myblue, thick, dashed] (12,1.5) arc (90:-90:.15 and .55)
node[pos=0.5,right]{\small{${}\!\beta_{g-2}$}}; 

\draw[myblue, thick] (12,-.4) arc (90:270:.15 and .55); 
\draw[myblue, thick,->] (12,-1.5) arc (270:175:.15 and .55); 
\draw[myblue, thick, dashed] (12,-.4) arc (90:-90:.15 and .55)
node[pos=0.5,right]{\small{${}\!\beta_{g-2}'$}};

\draw[black, ultra thick] (0,1.5) -- (15,1.5) arc (90:-90:1.5) -- (0,-1.5) arc (270:90:1.5);
\draw[black, ultra thick] (0,0) circle (.4);
\draw[black, ultra thick] (3,0) circle (.4);
\draw[black, ultra thick] (6,0) circle (.4);
\draw[black, ultra thick] (12,0) circle (.4);
\draw[black, ultra thick] (15,0) circle (.4);

\fill[black] (9,0) circle (1.5pt);
\fill[black] (9.3,0) circle (1.5pt);
\fill[black] (8.7,0) circle (1.5pt);

\end{tikzpicture}
\caption{A $\beta\delta$-multicurve for $\CU\in\CS$}\label{fig_beta_delta}
\end{figure}

From the fact that the group $\Sp(2g,\Z)$ acts transitively on the set of all orthogonal splittings $\HH_1(S_g)=U_0\oplus\cdots\oplus U_{g-1}$ with $\rank U_i=2$ it follows that a $\delta$-multicurve exists  for any splitting~$\CU\in\CS$. For each primitive homology class in~$U_i$, there exists a pair of non-isotopic simple closed curves in~$X_i$ in this homology class. Hence, for any pair~$(\CU,\Delta)$, there exists a $\beta$-multicurve for~$\CU$ compatible with~$\Delta$ with any prescribed set of homology classes of components $\bb\in\fB(\CU)$.

We conveniently agree that we always choose which of the two components of~$B$ contained in~$X_i$ is~$\beta_i$ and which is~$\beta'_i$ so that the following condition is satisfied:
 the connected component of~$X_i\setminus(\beta_i\cup\beta_i')$ that is adjacent to~$\delta_i$ lies on the right-hand side from~$\beta_i$ and on the left-hand side from~$\beta'_i$.

\begin{propos}\label{propose_one_orbit}
Two $\beta\delta$-multicurves for~$\CU$ lie in the same $\I_g$-orbit if and only if the corresponding $(g-2)$-tuples $\bb\in\fB(\CU)$ coincide.
\end{propos}

\begin{proof}
The `only if' part of the proposition is obvious; let us prove the `if' part. Suppose that $\Gamma$ and~$\widetilde{\Gamma}$ are two $\beta\delta$-multicurves for~$\CU$ with the same~$\bb=(b_1,\ldots,b_{g-2})$. Using the Alexander method (cf.~\cite[Section~2.3]{FaMa12}), one easily obtains that there exists a mapping class~$f\in\Mod(S_g)$ that takes $\Gamma$ to~$\widetilde{\Gamma}$ preserving the orientations of the components~$\beta_i$ and~$\beta_i'$. Consider the action of~$f$ on~$\HH_1(S_g)$. We have $f(U_i)=U_i$ for $i=0,\ldots,g-1$ and $f(b_i)=b_i$ for $i=1,\ldots,g-2$.  If $1\le i\le g-2$, then $a_i,b_i$ is a symplectic basis of~$U_i$. So  $f(a_i)=a_i+m_ib_i$ for some $m_i\in\Z$. Therefore, the mapping class $$f_1=fT_{\beta_1}^{-m_1}\cdots T_{\beta_{g-2}}^{-m_{g-2}}$$ still takes~$\Gamma$ to~$\widetilde{\Gamma}$ and acts trivially on each of the summands $U_1,\ldots,U_{g-2}$.  On each of the rest two summands~$U_0$ and~$U_{g-1}$ the mapping class~$f_1$ acts by a matrix in~$\mathrm{SL}(2,\Z)$. The corresponding components~$X_0$ and~$X_{g-1}$ of $S_g\setminus\Gamma$ are once-punctured tori. Hence there exists an orientation preserving homeomorphism $\varphi\colon S_g\to S_g$ that is the identity on $S_g\setminus(X_0\cup X_{g-1})$ and acts on $U_0$ and~$U_{g-1}$ in the same way as~$f_1$ does. Then $f_2=f_1\varphi^{-1}$ is a required element of the Torelli group~$\I_g$ taking~$\Gamma$ to~$\widetilde{\Gamma}$.
\end{proof}

For each $\CU\in\CS$ and each $\bb\in\fB(\CU)$, we consider the abelian cycle
\begin{equation*}
A_{\CU,\bb}=\CA\bigl(
T_{\beta_1}T_{\beta_1'}^{-1},\ldots,T_{\beta_{g-2}}T_{\beta_{g-2}'}^{-1}, T_{\delta_1},\ldots, T_{\delta_{g-1}}
\bigr)\in \HH_{2g-3}(\I_g),
\end{equation*}
where 
$$
\Gamma =\beta_1\cup\beta_1'\cup\cdots\cup\beta_{g-2}\cup\beta_{g-2}'\cup \delta_1\cup\cdots\cup\delta_{g-1}
$$
is a $\beta\delta$-multicurve for~$\CU$ with $\bigl([\beta_1],\ldots,[\beta_{g-2}]\bigr)=\bb$. It follows from Proposition~\ref{propose_one_orbit} that the homology class~$A_{\CU,\bb}$ is independent of the choice of~$\Gamma$.

For $\CU=(U_0,\ldots,U_{g-1})$ and $\bb=(b_1,\ldots,b_{g-2})$, we  denote by 
$\overline{\CU}$ and $\overline{\bb}$ the tuples obtained from~$\CU$ and~$\bb$ by reversing the orders of~$U_j$'s and $b_j$'s, respectively, i.\,e., $\overline{\CU}=(U_{g-1},\ldots,U_0)$, $\overline{\bb}=(b_{g-2},\ldots,b_{1})$. We have,
$$
A_{\overline{\CU},\overline{\bb}}=A_{\CU,\bb}.
$$
Indeed, both of these abelian cycles come from the same multicurve~$\Gamma$, so we only need to take care of the sign. Reversing the order of the Dehn twists~$T_{\delta_i}$ gives factor~$(-1)^{\binom{g-1}{2}}$, and reversing the order of the BP maps~$T_{\beta_i}T_{\beta_i'}^{-1}$ gives factor~$(-1)^{\binom{g-2}{2}}$. Besides, our convention on which of the two components in~$X_i$ is $\beta_i$ and which is~$\beta_i'$ leads to the fact that each pair of such components must be swapped when we pass from~$(\CU,\bb)$ to~$(\overline{\CU},\overline{\bb})$, which leads to additional factor~$(-1)^{g-2}$. The product of these three factors is equal to~$1$.

\begin{theorem}\label{theorem_Abelian_explicit}
Let $\CS'\subset\CS$ be a subset containing exactly one splitting in every pair~$\{\CU,\overline{\CU}\}$. Choose any representatives $\bb_{\CU}\in\fB(\CU)$, where $\CU\in \CS'$. Then the abelian cycles~$A_{\CU,\bb_{\CU}}$, where $\CU$ runs over~$\CS'$, are linearly independent in $\HH_{2g-3}(\I_g)$.
\end{theorem}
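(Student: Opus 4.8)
The plan is to work inside the Cartan--Leray spectral sequence $E^*_{*,*}$ for the action of $\I_g$ on the complex of cycles $\B_g$, to pin each abelian cycle $A_{\CU,\bb_{\CU}}$ down to a single orbit summand of the $E^1$-page, and then to detect it by mapping $E^*_{*,*}$ into an auxiliary spectral sequence $\hE^*_{*,*}(\fN)$ in which the differentials that could otherwise interfere vanish for dimension reasons. Throughout, the structural fact I would exploit is that $\I_g$ acts trivially on $H_1(S_g)$, so that every homological invariant of a multicurve $\Gamma$ is constant along its $\I_g$-orbit.

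First I would describe the cell carrying the cycle. The $2g-3$ pairwise commuting generators split into the $g-1$ separating twists $T_{\delta_1},\ldots,T_{\delta_{g-1}}$ and the $g-2$ maps $T_{\beta_i}T_{\beta_i'}^{-1}$. Since each $\delta_i$ is separating and disjoint from a cycle representing $x$ supported in the interiors of the pieces $X_0,\ldots,X_{g-1}$, the twists $T_{\delta_i}$ fix a distinguished cell $\sigma=\sigma_{\CU,\bb}$ of $\B_g$ and hence lie in $\Stab_{\I_g}(\sigma)$, whereas the maps $T_{\beta_i}T_{\beta_i'}^{-1}$ --- which shear any curve in class $a_i$ because $a_i\cdot b_i=1$ --- act as translations carrying $\sigma$ across its $\I_g$-orbit. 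Thus the homomorphism $\chi\colon\Z^{2g-3}\to\I_g$ defining $A_{\CU,\bb}$ factors as a product of a rank-$(g-2)$ translation part in the cellular direction and the rank-$(g-1)$ subgroup inside $\Stab_{\I_g}(\sigma)$; consequently $A_{\CU,\bb}$ is represented on the $E^1$-page by the image of the fundamental class of the torus generated by the $T_{\delta_i}$ inside $H_{g-1}\bigl(\Stab_{\I_g}(\sigma)\bigr)$, placed in the summand of $E^1_{g-2,\,g-1}$ indexed by the orbit $\I_g\cdot\sigma$. This is the position $E^r_{n,\,3g-5-2n}$ with $n=g-2$.

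The decisive step combines detection with the separation of different splittings. Because $\I_g$ fixes $H_1(S_g)$, the ordered splitting $\CU$ and the classes $b_i$ are invariants of the orbit $\I_g\cdot\sigma$; hence two pairs yield the same orbit summand only when they agree up to the reversal symmetry $A_{\overline{\CU},\overline{\bb}}=A_{\CU,\bb}$, which is exactly why one keeps a single representative of each pair $\{\CU,\overline{\CU}\}$ in $\CS'$. Thus distinct $\CU\in\CS'$ land in distinct summands of $E^1_{g-2,\,g-1}$, and it remains to show that each such class is nonzero and survives to $E^\infty$. For nonvanishing I would build an explicit homomorphism from $\Stab_{\I_g}(\sigma)$ onto $\Z^{g-1}$ sending the separating twists $T_{\delta_i}$ to a basis, under which the torus class maps to a generator of $H_{g-1}(\Z^{g-1})\cong\Z$. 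For survival I would choose a datum $\fN$ adapted to $\CU$, map $E^*_{*,*}\to\hE^*_{*,*}(\fN)$, and check that the image of our class lands in a free summand whose bidegree has both neighbouring differentials zero in $\hE^*_{*,*}(\fN)$. Since $E^\infty_{g-2,\,g-1}$ is a subquotient of $H_{2g-3}(\I_g)$ in the Cartan--Leray filtration, nonvanishing and mutual independence in $\hE^\infty(\fN)$ pull back to linear independence of the $A_{\CU,\bb_{\CU}}$; as $\CS'$ is infinite, this yields a free abelian subgroup of infinite rank.

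I expect the main obstacle to be the survival step, namely controlling the higher differentials of $E^*_{*,*}$ emanating from and landing in the position $(g-2,g-1)$. Attacking these directly would require computing $d^2,d^3,\ldots$, which appears intractable; the purpose of interposing $E^*_{*,*}(\fN)$ and $\hE^*_{*,*}(\fN)$ is precisely to replace this by a dimension count in an auxiliary complex, where the cells contributing to the neighbouring total degrees simply do not exist, forcing the relevant differentials to vanish. A secondary, more technical difficulty is to verify that $\chi$ factors through the stabilizer-and-translation decomposition exactly as claimed and that the fundamental class of the separating-twist torus is not annihilated when passing from $H_{g-1}\bigl(\Stab_{\I_g}(\sigma)\bigr)$ to $\hE^1(\fN)$; this is what the explicit homomorphism onto $\Z^{g-1}$ is designed to guarantee.
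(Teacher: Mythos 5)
Your overall architecture is the same as the paper's: realize $A_{\CU,\bb}$ via the cube-with-translations picture so that Lemma~\ref{propos_spectral} places it in $\CF_{g-2,g-1}$ with image in $E^{\infty}_{g-2,g-1}$ represented by a torus class supported on the cell $P_N$, and then detect it through the auxiliary spectral sequence $\hE^*_{*,*}(\fN)$, where $\hE^1_{g-2,g-1}(\fN)=\hE^{\infty}_{g-2,g-1}(\fN)$ for dimension reasons. The gap is in your ``decisive step''. You claim that, since $\I_g$ acts trivially on $H_1(S_g)$, the splitting $\CU$ (and even the classes $b_i$) are invariants of the orbit $\I_g\cdot\sigma$, so that distinct $\CU\in\CS'$ occupy distinct summands of $E^1_{g-2,g-1}$. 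This is false. The cell $\sigma$ is $P_N$ for the multicurve $N$ of the curves $\alpha_i,\alpha_i'$; its $\I_g$-orbit determines the primitive classes $a_i$ and the topological configuration, but not the rank-two subgroups $U_i$ --- these are images of $H_1(X_i)$ for the complementary pieces of the separating multicurve $\Delta$, which is not part of $N$ --- and not the $b_i$ either. Indeed, by Proposition~\ref{propos_compatible_N}, the splittings compatible with a fixed orbit $\fN$ form a full orbit of the free shift action of $\Z^{g-1}$ (up to reversal): $\CU$ and $\CU[\bk]$ give cells in the same $\I_g$-orbit for every $\bk\in\Z^{g-1}$, yet they are distinct splittings and contribute distinct elements of $\CS'$. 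So infinitely many of your classes land in one and the same summand, and separation-by-summand proves nothing about them.

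Consequently your detection device is too coarse. Inside the common summand the class attached to $\CU[\bk]$ is the torus class on the conjugated twists $y_{i,k_i}=T_{\gamma_i}^{k_i}T_{\delta_i}T_{\gamma_i}^{-k_i}$, that is, $\theta_{\CU[\bk]}=\CA(y_{1,k_1},\ldots,y_{g-1,k_{g-1}})$. A homomorphism $\Stab_{\I_g}(\sigma)\to\Z^{g-1}$ ``sending the separating twists to a basis'' (the natural one sends every $y_{i,k}$ to $e_i$) maps all of these classes to the same generator of $H_{g-1}(\Z^{g-1})\cong\Z$: it certifies that each class is nonzero but cannot certify that any two of them are independent. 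To separate them one needs precisely the structural results that constitute the bulk of the paper's argument: $H_N\cong\F_{\infty}\times\cdots\times\F_{\infty}$ with the $y_{i,k}$, $k\in\Z$, as free generators of the $i$th factor (Proposition~\ref{propos_stabilizer}), hence $H_{g-1}(H_N)$ is free abelian with the classes $\CA(y_{1,k_1},\ldots,y_{g-1,k_{g-1}})$ as a basis (Corollary~\ref{cor_Hg-1}), together with the matching of this basis with $\CS_{\fN}$ modulo reversal (Proposition~\ref{propos_hE_basis}). With that in hand, $\Phi_{\fN}(A_{\CU,\bb})=\theta_{\CU}$ for $\fN=\fN_{\CU}$ and $0$ otherwise (Proposition~\ref{propos_Phi}) gives the theorem; without it, your argument establishes only that each $A_{\CU,\bb_{\CU}}$ is nonzero, not that the family is linearly independent.
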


Since the set $\CS$ is infinite,  Theorem~\ref{theorem_Abelian} follows from Theorem~\ref{theorem_Abelian_explicit}. We will prove Theorem~\ref{theorem_Abelian_explicit} in Section~\ref{section_Abelian_proof}.

We denote the image of~$A_{\CU,\bb}$ under the natural mapping $\HH_{2g-3}(\I_g)\to \HH_{2g-3}(\I_g;\Z[1/2])$ again by~$A_{\CU,\bb}$.

\begin{propos}\label{propos_pm_abelian}
The classes $A_{\CU,\bb}$ lie in $\HH_{2g-3}(\I_g;\Z[1/2])^-$ whenever $g$ is odd and lie in $\HH_{2g-3}(\I_g;\Z[1/2])^+$ whenever $g$ is even.
\end{propos}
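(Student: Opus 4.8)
The plan is to compute the action of the hyperelliptic involution~$s$ directly on the abelian cycle~$A_{\CU,\bb}$, using the naturality of abelian cycles under conjugation. Recall that conjugation by any element of~$\I_g$ acts trivially on $H_*(\I_g)$, so the action of the extended Torelli group factors through $\widehat{\I}_g/\I_g\cong\Z/2\Z$; in particular any hyperelliptic involution induces the same involution of~$H_*(\I_g)$, and I am free to choose~$s$ conveniently. Since $c_s(h)=shs^{-1}$ is an automorphism of the normal subgroup $\I_g\triangleleft\widehat{\I}_g$ and abelian cycles are natural with respect to homomorphisms, we have
\begin{equation*}
s_*A_{\CU,\bb}=\CA\bigl(c_s(T_{\beta_1}T_{\beta_1'}^{-1}),\ldots,c_s(T_{\beta_{g-2}}T_{\beta_{g-2}'}^{-1}),\,c_s(T_{\delta_1}),\ldots,c_s(T_{\delta_{g-1}})\bigr).
\end{equation*}
As $c_s(T_\alpha)=T_{s(\alpha)}$ for every simple closed curve~$\alpha$, the computation reduces to understanding the images $s(\delta_j)$, $s(\beta_i)$, $s(\beta_i')$.

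First I would fix a convenient symmetric model. Exploiting that $A_{\CU,\bb}$ does not depend on the choice of representative multicurve~$\Gamma$ within its $\I_g$-orbit, I would realize $S_g$ as a surface in~$\R^3$ invariant under the rotation~$s$ by the angle~$\pi$ about a horizontal axis meeting~$S_g$ in $2g+2$ points, so that $s$ is a hyperelliptic involution. I would draw the chain of Figure~\ref{fig_beta_delta} symmetrically, so that each separating curve~$\delta_j$ lies in a plane orthogonal to the axis and the two homologous curves $\beta_i,\beta_i'\subset X_i$ are interchanged by the rotation. Then $s(\delta_j)=\delta_j$ and $s(\beta_i)=\beta_i'$, $s(\beta_i')=\beta_i$ as isotopy classes of unoriented curves (orientations being irrelevant for Dehn twists). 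One checks that the resulting~$\Gamma$ satisfies conditions $(\Delta1)$--$(L3)$ for the given data $(\CU,\bb)$: indeed $s$ preserves each subsurface~$X_i$ and acts on $U_i=\mathrm{Im}(H_1(X_i)\to H_1(S_g))$ as~$-1$, which is compatible with $[\beta_i]=[\beta_i']=b_i$.

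With this model $c_s(T_{\delta_j})=T_{\delta_j}$, while
\begin{equation*}
c_s\bigl(T_{\beta_i}T_{\beta_i'}^{-1}\bigr)=T_{\beta_i'}T_{\beta_i}^{-1}=\bigl(T_{\beta_i}T_{\beta_i'}^{-1}\bigr)^{-1},
\end{equation*}
the last equality using that $T_{\beta_i}$ and $T_{\beta_i'}$ commute, since $\beta_i$ and~$\beta_i'$ are disjoint. Thus conjugation by~$s$ inverts each of the $g-2$ bounding pair generators and fixes each of the $g-1$ generators~$T_{\delta_j}$. Now I would invoke the elementary transformation rule for abelian cycles: for pairwise commuting elements and signs $\epsilon_i\in\{\pm1\}$ one has $\CA(h_1^{\epsilon_1},\ldots,h_m^{\epsilon_m})=(\epsilon_1\cdots\epsilon_m)\,\CA(h_1,\ldots,h_m)$, because replacing $e_i\mapsto\epsilon_i e_i$ is an automorphism of~$\Z^m$ of determinant $\epsilon_1\cdots\epsilon_m$ that acts on $H_m(\Z^m)\cong\Z$ by multiplication by that determinant. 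Applying this with exactly $g-2$ inversions gives
\begin{equation*}
s_*A_{\CU,\bb}=(-1)^{g-2}A_{\CU,\bb}=(-1)^{g}A_{\CU,\bb}.
\end{equation*}
Passing to $\Z[1/2]$-coefficients, $A_{\CU,\bb}$ is an eigenvector of~$s_*$ with eigenvalue~$(-1)^g$, hence lies in $H_{2g-3}(\I_g;\Z[1/2])^-$ when $g$ is odd and in $H_{2g-3}(\I_g;\Z[1/2])^+$ when $g$ is even.

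The only genuinely nonformal step is the construction of the symmetric model, that is, exhibiting a single hyperelliptic involution~$s$ together with a representative~$\Gamma$ in the prescribed $\I_g$-orbit for which $s$ fixes every~$\delta_j$ and swaps $\beta_i$ with~$\beta_i'$; I expect this to be the main point requiring care, though it is geometrically transparent from Figure~\ref{fig_beta_delta}. Everything else is the naturality of abelian cycles and the determinant computation, which are routine.
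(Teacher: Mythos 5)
Your proof is correct and follows essentially the same route as the paper: the paper also takes $s$ to be the rotation by~$\pi$ about the horizontal axis of Fig.~\ref{fig_beta_delta}, so that $s(\delta_i)=\delta_i$ and $s$ swaps $\beta_i\leftrightarrow\beta_i'$, and concludes $s_*(A_{\CU,\bb})=\CA\bigl(T_{\beta_1}^{-1}T_{\beta_1'},\ldots,T_{\delta_1},\ldots\bigr)=(-1)^{g-2}A_{\CU,\bb}$. The only difference is that you spell out the steps the paper leaves implicit --- that any element of $\widehat{\I}_g\setminus\I_g$ induces the same action on $H_*(\I_g)$, the naturality of abelian cycles under conjugation, the existence of a symmetric representative $\Gamma$ in the prescribed $\I_g$-orbit, and the determinant sign rule --- which is a faithful elaboration rather than a different argument.
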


\begin{proof}
Let $s$ be the rotation of~$S_g$  around the horizontal axis by~$\pi$ that stabilizes every curve~$\delta_i$ and swaps the curves in every pair~$\{\beta_i,\beta_{i}'\}$ (cf. Fig.~\ref{fig_s} in Subsection~\ref{subsection_BBM}). Then~$s$ acts on~$\HH_1(S_g)$ as~$-1$, hence, $s\in\widehat{\I}_g$ and~$s\notin\I_g$. Therefore $\HH_{2g-3}(\I_g;\Z[1/2])^{\pm}$ are the eigenspaces with eigenvalues $\pm 1$ for the action of~$s$ on~$\HH_{2g-3}(\I_g;\Z[1/2])$. We have
\begin{equation*}
s_*(A_{\CU,\bb})=\CA\bigl(
T_{\beta_1'}T_{\beta_1}^{-1},\ldots,T_{\beta_{g-2}'}T_{\beta_{g-2}}^{-1}, T_{\delta_1},\ldots, T_{\delta_{g-1}}
\bigr)=(-1)^{g-2}A_{\CU,\bb},
\end{equation*}
which implies the required assertion.
\end{proof}

The assertion of Theorem~\ref{theorem_pm} for $k=2g-3$ follows immediately from Theorem~\ref{theorem_Abelian_explicit} and Proposition~\ref{propos_pm_abelian}.

\subsection{Bestvina--Bux--Margalit classes}\label{subsection_BBM}
Let $S_{g,1}$ be an oriented genus~$g$ surface with one puncture and let $S_g^1$ be an oriented compact genus~$g$ surface with one boundary component. Consider the corresponding mapping class groups
\begin{align*}
	\Mod(S_{g,1})&=\pi_0\Homeo^+(S_{g,1}),\\
	\Mod(S_g^1)&=\pi_0\Homeo(S_g^1,\partial S_g^1),
\end{align*} 
where $\Homeo(S_g^1,\partial S_g^1)$ is the group of all homeomorphisms $S_g^1\to S_g^1$ that are the identity on the boundary.
Let $\I_{g,1}$ and~$\I_g^1$ be the corresponding Torelli groups, that is, the kernels of the natural surjective homomorphisms $\Mod(S_{g,1})\to\Sp(2g,\Z)$ and $\Mod(S_g^1)\to\Sp(2g,\Z)$, respectively. One more group, which will be of our interest, is the stabilizer $\Stab_{\I_{g+1}}(v)$ of a non-separating simple closed curve~$v$ on~$S_{g+1}$.

\begin{theorem}[Bestvina, Bux, Margalit~\cite{BBM07}]\label{theorem_BBM2}
 Suppose that $g\ge 2$. Then
 \begin{align*}
 	\cd(\I_g)=3g-5,\qquad\qquad
 	\cd(\I_{g,1})&=3g-3,\qquad\qquad
 	\cd(\I_g^1)=3g-2,\\
 	\cd\bigl(\Stab_{\I_{g+1}}(v)\bigr)&=3g-2,
 \end{align*}
 and the top homology groups~$\HH_{3g-5}(\I_g)$, $\HH_{3g-3}(\I_{g,1})$, $\HH_{3g-2}(\I_g^1)$, and~$\HH_{3g-2}\bigl(\Stab_{\I_{g+1}}(v)\bigr)$ contain  free abelian subgroups of infinite rank.
\end{theorem}

\begin{remark}
	In~\cite{BBM07} this theorem is formulated for~$\I_g$, $\I_{g,1}$, and~$\Stab_{\I_{g+1}}(v)$  while the group~$\I_g^1$ is never mentioned. Nevertheless, the result for~$\I_g^1$ follows immediately from the result for~$\I_{g,1}$ and in fact is implicitly contained in~\cite{BBM07}. Besides, Bestvina, Bux, and Margalit asserted only that the top homology groups are not finitely generated. However, their proof in fact yields that these groups contain free abelian subgroups of infinite rank.
\end{remark}

Let us now recall in convenient for us form the construction from~\cite{BBM07} of an infinite number of linearly independent homology classes in the top homology groups of the Torelli groups~$\I_g$, $\I_{g,1}$ and~$\I_g^1$ and the stabilizer~$\Stab_{\I_{g+1}}(v)$. We have to slightly change the description of this construction, since for us the case of~$\I_g^1$, which was omitted in~\cite{BBM07}, is of the most interest. The construction consists of the following sequence of injective homomorphisms (for $g\ge 2$), the first two of which are isomorphisms:
\begin{equation}\label{eq_mono_sequence}
\HH_{3g-5}(\I_g)\xrightarrow{\cong} \HH_{3g-3}(\I_{g,1})\xrightarrow{\cong} \HH_{3g-2}(\I_g^1)\hookrightarrow \HH_{3g-2}\bigl(\Stab_{\I_{g+1}}(v)\bigr)\hookrightarrow \HH_{3g-2}(\I_{g+1}).
\end{equation}
Let us describe these homomorphisms one by one and explain how to extract their injectivity from~\cite{BBM07}.

1. The following short exact sequence  is due to Johnson~\cite{Joh83} (cf.~\cite[Lemma~8.3]{BBM07}):
\begin{equation}\label{eq_ses}
1\to \pi_1(S_g)\to \I_{g,1}\to\I_g\to 1.
\end{equation}
Since $\cd(\I_g)=3g-5$, $\cd\bigl( \pi_1(S_g)\bigr)=2$, and the group~$\I_g$ acts trivially on the homology group $\HH_2(\pi_1(S_g))=\HH_2(S_g)=\Z$, the Lyndon--Hochschild--Serre spectral sequence~$E^*_{*,*}$ for this short exact sequence yields the first isomorphism in the sequence~\eqref{eq_mono_sequence}:
$$
\HH_{3g-5}(\I_g)=\HH_{3g-5}(\I_g,\HH_2(\pi_1(S_g)))=E^2_{3g-5,\,2}=E^{\infty}_{3g-5,\,2}\cong \HH_{3g-3}(\I_{g,1}),
$$
see~\cite[Lemma~8.8]{BBM07} for details.

2. An inclusion $S_g^1\subset S_{g,1}$ induces a short exact sequence
\begin{equation}\label{eq_exact}
	1\to\Z\to\I_g^1\to\I_{g,1}\to 1.
\end{equation}
Here the subgroup~$\Z\subset\I_g^1$ is generated by the Dehn twist about a simple curve isotopic to the boundary of~$S_g^1$, hence, is central in~$\I_g^1$. Since $\cd(\I_{g,1})=3g-3$, the Lyndon--Hochschild--Serre spectral sequence for the central extension~\eqref{eq_exact} yields the second isomorphism in the sequence~\eqref{eq_mono_sequence}, namely,
$\HH_{3g-3}(\I_{g,1})\cong \HH_{3g-2}(\I_g^1)$.

3. Let $\varphi\colon S^1_g\hookrightarrow S_{g+1}\setminus v$ be an arbitrary embedding. Extending homeomorphisms by the identity yields a homomorphism $\I_g^1\to\Stab_{\I_{g+1}}(v)$. For the third homomorphism in the sequence~\eqref{eq_mono_sequence}, we take the corresponding homomorphism in homology~$\varphi_*\colon \HH_{3g-2}(S^1_g)\to \HH_{3g-2}\bigl(\Stab_{\I_{g+1}}(v)\bigr)$. For the injectivity of~$\varphi_*$, see Proposition~\ref{propos_injection} below.

4. The fourth homomorphism in the sequence~\eqref{eq_mono_sequence} is the homomorphism induced by the inclusion $\Stab_{\I_{g+1}}(v)\subset\I_{g+1}$. It is injective by~\cite[Lemma~8.10]{BBM07}.

\begin{propos}\label{propos_injection}
Let $v$ be a non-separating simple closed curve on~$S_{g+1}$, where $g\ge 1$. Then any embedding $\varphi\colon S_g^1\hookrightarrow S_{g+1}\setminus v$ induces an injective homomorphism $\HH_{3g-2}(\I_g^1)\to \HH_{3g-2}\bigl(\Stab_{\I_{g+1}}(v)\bigr)$.
\end{propos}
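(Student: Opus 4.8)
The plan is to factor the homomorphism induced by $\varphi$ through the group-extension structure that Bestvina, Bux, and Margalit use to study $\Stab_{\I_g}(v)$, and to extract the map on top homology by comparing two Hochschild--Serre spectral sequences. First I would set up the geometry. Extending each mapping class of $S_{g-1,1}$ by the identity outside $\varphi(S_{g-1,1})$ defines a homomorphism $\iota\colon\I_{g-1,1}\to\Stab_{\I_g}(v)$: the image preserves $v$ and is the identity near $v$, so it lies in the Torelli group and stabilizes $v$. The complement of $\varphi(S_{g-1,1})$ in $S_g$ is a once-holed torus $Y$ containing $v$, with $\partial Y=c=\varphi(\partial S_{g-1,1})$. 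Recall the central extension
\[
1\to\Z\to\I_{g-1,1}\to\I_{g-1,*}\to 1,
\]
whose kernel is generated by the boundary twist $T_c$ (this is the extension already used in the proof of Proposition~\ref{propos_BBM}). Since $\cd(\I_{g-1,*})=3g-6$, its Hochschild--Serre spectral sequence has only the term $E^2_{3g-6,1}$ in total degree $3g-5$ and no differentials in or out of it, so $H_{3g-5}(\I_{g-1,1})\cong H_{3g-6}(\I_{g-1,*})$.

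Next I would bring in the structure of $\Stab_{\I_g}(v)$. Cutting $S_g$ along $v$ and capping the resulting boundary circles, as in the proof of Theorem~C of~\cite{BBM07} underlying Fact~\ref{fact_emb2}, produces a short exact sequence
\[
1\to F\to\Stab_{\I_g}(v)\to\I_{g-1,*}\to 1,
\]
with $F$ free (generated by point-pushing maps and boundary twists), whose quotient map collapses the $Y$--side to a single marked point. Because elements of $\iota(\I_{g-1,1})$ are supported in $S_{g-1,1}$, the composite $\I_{g-1,1}\xrightarrow{\iota}\Stab_{\I_g}(v)\to\I_{g-1,*}$ is exactly the capping map $p$ above, and $\iota$ carries $T_c$ into $F$. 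Thus $\iota$ is a morphism of the two extensions inducing the identity on the common quotient $\I_{g-1,*}$, with kernel map $\Z=\langle T_c\rangle\to F$, $T_c\mapsto T_c$.

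Comparing the two Hochschild--Serre spectral sequences, both of which reduce in total degree $3g-5$ to the single term $E^2_{3g-6,1}$ (since $\cd(\I_{g-1,*})=3g-6$ and the kernels have cohomological dimension $\le 1$), identifies $\iota_*$ on top homology with the coefficient homomorphism
\[
H_{3g-6}\bigl(\I_{g-1,*};\Z\bigr)\to H_{3g-6}\bigl(\I_{g-1,*};H_1(F)\bigr)
\]
induced by $\Z\to H_1(F)$, $1\mapsto[T_c]$. The map $\Z\to H_1(F)$ is $\I_{g-1,*}$-equivariant with $\Z$ a trivial module, so $[T_c]$ lies in the free abelian group $M\subseteq H_1(F)$ of $\I_{g-1,*}$-invariants furnished by the proof of Fact~\ref{fact_emb2}, and $\iota_*$ factors as
\[
H_{3g-6}(\I_{g-1,*})\otimes\Z\xrightarrow{\ \otimes[T_c]\ }H_{3g-6}(\I_{g-1,*})\otimes M\hookrightarrow H_{3g-5}(\Stab_{\I_g}(v)),
\]
the last arrow being the injection of Fact~\ref{fact_emb2}.

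Everything therefore reduces to showing that $[T_c]$ is a \emph{primitive} element of $M$, i.e.\ that $\Z[T_c]$ is a direct summand; granting this, $\otimes[T_c]$ is a split injection even when $H_{3g-6}(\I_{g-1,*})$ has torsion, and $\iota_*$ is injective. This primitivity is the main obstacle, since it cannot be seen abstractly (one must exclude $[T_c]$ being zero or divisible in $M$) and requires tracing $T_c$ through the cut-and-cap construction. Concretely, after cutting along $v$ the separating curve $c$ bounds a pair of pants with boundary $c,b_1,b_2$, so once one copy of $v$ is capped the twist $T_c$ becomes a boundary twist that is one of the free generators of $F$; its homology class is then part of a basis of $H_1(F)$ and survives as a primitive element of $M$. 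Pinning down that this generator is neither killed nor identified with another in passing to $M$ is the delicate step that completes the argument.
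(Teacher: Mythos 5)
Your setup and reduction coincide with the paper's own proof: you map the central extension $1\to\Z\to\I_{g-1,1}\to\I_{g-1,*}\to 1$ into the Birman-type extension $1\to K\to\Stab_{\I_g}(v)\to\I_{g-1,*}\to 1$ (your $F$ is the paper's $K$, the commutator subgroup of $\pi_1(S_{g-1,*})$), observe that both Hochschild--Serre spectral sequences have a single surviving term $E^2_{3g-6,1}$ in total degree $3g-5$ because $\cd(\I_{g-1,*})=3g-6$ and the kernels have $\cd\le 1$, and thereby identify $\varphi_*$ with the coefficient homomorphism $H_{3g-6}(\I_{g-1,*};\Z)\to H_{3g-6}(\I_{g-1,*};H_1(K))$ sending the generator to $[T_c]$. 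All of this is correct and is exactly what the paper does.

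After that, however, there is a genuine gap at the decisive step, which you flag yourself as ``the delicate step''. You factor the coefficient map through $H_{3g-6}(\I_{g-1,*})\otimes M$ and need both (i) $[T_c]\in M$ and (ii) $[T_c]$ primitive in $M$, where $M$ is the \emph{specific} submodule of $H_1(K)$ constructed in~\cite[Lemma~8.7]{BBM07}, for which the injection $H_{3g-6}(\I_{g-1,*})\otimes M\hookrightarrow H_{3g-5}(\Stab_{\I_g}(v))$ is known. Neither is proved: your equivariance argument only places $[T_c]$ in the group of $\I_{g-1,*}$-invariants of $H_1(K)$, which need not coincide with $M$; and primitivity is asserted rather than shown, even though it is genuinely needed in your formulation (if $[T_c]$ were divisible in $M$, the map $u\mapsto u\otimes[T_c]$ could annihilate torsion in $H_{3g-6}(\I_{g-1,*})$, which is not known to be torsion-free). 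The paper closes exactly this gap by a different observation: the module $M$ of~\cite[Lemma~8.7]{BBM07} is precisely $i_*(\Z)$, the cyclic subgroup generated by the boundary-twist class; since that class is non-trivial by~\cite[Lemma~8.6]{BBM07} and $H_1(K)$ is torsion-free, $i_*\colon\Z\to H_1(K)$ is injective, and the coefficient map above is then \emph{literally} the injection established in the proof of Theorem~C of~\cite{BBM07}, so no question of primitivity inside a larger module ever arises. Your fallback sketch (that $T_c$ ``becomes one of the free generators of $F$'') does not repair this: $K$ is an infinite-rank free group, the non-vanishing of $[T_c]$ in $H_1(K)$ is itself the content of~\cite[Lemma~8.6]{BBM07}, and primitivity in $H_1(K)$ would in any case not imply primitivity in $M$. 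To complete your argument you must import from~\cite{BBM07} the identification of $M$ with the cyclic group generated by $[T_c]$, at which point your proof becomes the paper's.
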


\begin{proof}
The proof of this proposition is essentially contained in the proofs of Lemma~8.7 and Theorem~C in~\cite{BBM07}. Let us explain how to extract the proof from what is written there.

We have a commutative diagram with exact rows:
\begin{equation}\label{eq_CD}
\begin{tikzcd}
1 \arrow[r] &
\Z  \arrow[r] \arrow[d, "i"] &
\I_g^1 \arrow [r]\arrow [d,"\varphi_*"] &
\I_{g,1} \arrow[r] \arrow[d, equal] &
1\\
1 \arrow[r] &
K  \arrow[r] &
\Stab_{\I_{g+1}}(v) \arrow[r] &
\I_{g,1} \arrow[r] &
1
\end{tikzcd}
\end{equation}
Here the first row is the exact sequence~\eqref{eq_exact}, the second row is the restriction of the Birman exact sequence obtained in~\cite[Lemma~8.3]{BBM07}, and $K$ is the commutator subgroup of~$\pi_1(S_{g,1})$, where $S_{g,1}$ is a once-punctured surface of genus~$g$ obtained from~$S_{g+1}\setminus v$ by forgetting one of the two punctures. The group~$\Z$ in the first row is generated by the Dehn twist~$T_{\varepsilon}$, where $\varepsilon$ is a simple closed curve isotopic to the boundary of~$S_g^1$. By~\cite[Lemma~8.6]{BBM07}, the element~$i(T_{\varepsilon})$ represents a non-trivial homology class in~$\HH_1(K)$. But the group~$K$ is free, since it is a subgroup of the free group~$\pi_1(S_{g,1})$. Hence $\HH_1(K)$ is free abelian. Therefore, the homomorphism
$
i_*\colon\Z=\HH_1(\Z)\to \HH_1(K)
$
is an injection. This injection induces a homomorphism
\begin{equation}\label{eq_injection_i*}
i_*\colon \HH_{3g-3}(\I_{g,1})=\HH_{3g-3}(\I_{g,1},\HH_1(\Z))\to \HH_{3g-3}(\I_{g,1},\HH_1(K)),
\end{equation}
where $\HH_1(K)$ is endowed with the structure of an $\I_{g,1}$-module corresponding to the second row of~\eqref{eq_CD}. (Since $T_{\varepsilon}$ lies in the centre of~$\I_{g,1}$, the action of~$\I_{g,1}$ on the coefficient group $\HH_1(\Z)=\Z$ is trivial.) It is shown in the proof of Theorem~C in~\cite{BBM07} that the homomorphism~\eqref{eq_injection_i*} is an injection. (Note that the submodule~$M\subset \HH_1(K)$ that is constructed in~\cite[Lemma~8.7]{BBM07} and used in the proof of Theorem~C in~\cite{BBM07} is exactly~$i_*(\Z)$.)

Finally, consider the Lyndon--Hochschild--Serre spectral sequences for the rows of~\eqref{eq_CD}. Since  $\cd(\I_{g,1})=3g-3$ and $\cd(\Z)=\cd(K)=1$, these spectral sequences give a commutative diagram
\begin{equation*}
\begin{tikzcd}
\HH_{3g-2}(\I_g^1) \arrow[r, leftrightarrow, "\cong" above]
\arrow[d, "\varphi_*"]
& \HH_{3g-3}(\I_{g,1},\Z) \arrow[d,"i_*" left, hookrightarrow]\\
\HH_{3g-2}\bigl(\Stab_{\I_{g+1}}(v)\bigr) \arrow[r, leftrightarrow, "\cong" above]
& \HH_{3g-3}(\I_{g,1},\HH_1(K))
\end{tikzcd}
\end{equation*}
Thus, $\varphi_*$ is an injection.
\end{proof}

Recall that, by a result of Mess~\cite{Mes92}, the group $\I_2$ is an infinitely generated free group and hence $\HH_1(\I_2)$ is an infinitely generated free abelian group. Iterated application of homomorphisms in sequences~\eqref{eq_mono_sequence} provides injections of the group~$\HH_1(\I_2)$ into the groups~$\HH_{3g-5}(\I_g)$, $\HH_{3g-3}(\I_{g,1})$, $\HH_{3g-2}(\I_g^1)$, and~$\HH_{3g-2}\bigl(\Stab_{\I_{g+1}}(v)\bigr)$ for any~$g$.

\begin{definition}
Homology classes in the groups~$\HH_{3g-5}(\I_g)$, $\HH_{3g-3}(\I_{g,1})$, $\HH_{3g-2}(\I_g^1)$, and~$\HH_{3g-2}\bigl(\Stab_{\I_{g+1}}(v)\bigr)$ that can be obtained from homology classes in $\HH_1(\I_2)$ by an iterated application of homomorphisms in sequences~\eqref{eq_mono_sequence} will be called \textit{Bestvina--Bux--Margalit} classes.
\end{definition}

We see that each of the top homology groups~$\HH_{3g-5}(\I_g)$, $\HH_{3g-3}(\I_{g,1})$, $\HH_{3g-2}(\I_g^1)$, and~$\HH_{3g-2}\bigl(\Stab_{\I_{g+1}}(v)\bigr)$ contains an infinitely generated free abelian subgroup consisting of Bestvina--Bux--Margalit classes.

\begin{remark}
 The homomorphisms in the sequence~\eqref{eq_mono_sequence} are not canonical. Indeed, they depend on the choice of a non-separating simple closed curve $v$ and an embedding $\varphi\colon S^1_g\hookrightarrow S_{g+1}\setminus v$. The relations between the Bestvina--Bux--Margalit classes corresponding to different choices of these data are yet to be studied.
\end{remark}

Let us now show that Bestvina--Bux--Margalit homology classes in~$\HH_{3g-5}(\I_g)$  lie in $\HH_{3g-5}(\I_g;\Z[1/2])^+$ after localization away from~$2$, and similarly for $\HH_{3g-3}(\I_{g,1})$, $\HH_{3g-2}(\I_g^1)$, and~$\HH_{3g-2}\bigl(\Stab_{\I_{g+1}}(v)\bigr)$.

As in the closed case, the \textit{extended Torelli group}~$\widehat{\I}_{g,1}$ consists of all mapping classes in~$\Mod(S_{g,1})$ that act on~$\HH_1(S_{g,1})$   either trivially or as~$-1$. In the case of~$S_{g}^{1}$, we could define an extended Torelli group likewise, that is, as the subgroup of~$\Mod(S^1_g)$ consists of all mapping classes that act on~$\HH_1(S_g^1)$ either trivially or as~$-1$. Nevertheless, we prefer to refer to another group as the \textit{extended Torelli group} in this case. Namely, let~$\widehat{\I}_g^1$ be the group consisting of all isotopy classes of orientation preserving homeomorphisms $f\colon S_g^1\to S_g^1$ such that either $f$ acts trivially on~$\HH_1(S_g^1)$ and fixes $\partial S_g^1$ pointwise or  $f$ acts as~$-1$ on~$\HH_1(S_g^1)$ and acts as rotation by~$\pi$ on~$\partial S_g^1$. An advantage of such extension of the Torelli group~$\I_g^1$ is that $\widehat{\I}^1_g$ contains a \textit{hyperelliptic involution}, which is an element of order~$2$ in $\widehat{\I}_g^1$ not belonging to~$\I_g^1$. As in the closed case, the groups~$\I_{g,1}$ and $\I_{g}^1$ are subgroups of index~$2$ of the groups~$\widehat{\I}_{g,1}$ and~$\widehat{\I}_{g}^1$, respectively, which yields the action of~$\Z/2\Z$ on the homology of~$\I_{g,1}$ and $\I_{g}^1$.

Note also that $\Stab_{\I_{g+1}}(v)$  is a subgroup of index~$2$ of the group~$\Stab_{\widehat{\I}_{g+1}}(v)$, since there exists a mapping class in~$\widehat{\I}_{g+1}$ that takes $v$ to itself reversing the orientation of it. This yields the action of $\Z/2\Z$ on the homology of~$\Stab_{\I_{g+1}}(v)$.

\begin{propos}\label{propos_equivariant}
All homomorphisms in the sequence~\eqref{eq_mono_sequence}
are $\Z/2\Z$-equivariant.
\end{propos}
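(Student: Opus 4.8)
The plan is to realize all five $\Z/2\Z$-actions appearing in~\eqref{eq_sequence} as conjugation by a single geometric involution of~$S_g$, after which the equivariance of every map follows from naturality. The key observation is that for a group~$G$ sitting in an extension $1\to G\to\widehat{G}\to\Z/2\Z\to 1$, the action of the generator of~$\Z/2\Z$ on~$H_*(G)$ is induced by conjugation by an \emph{arbitrary} element $\widehat{s}\in\widehat{G}\setminus G$: conjugation by elements of~$G$ induces the identity on~$H_*(G)$, so the conjugation action of~$\widehat{G}$ on~$H_*(G)$ factors through~$\widehat{G}/G$. Hence it suffices to exhibit one involution that simultaneously represents the non-trivial coset for each of the groups $\I_{g-1}$, $\I_{g-1,*}$, $\I_{g-1,1}$, $\Stab_{\I_g}(v)$, and~$\I_g$, and that is compatible with all the intervening group-theoretic data.

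First I would fix the involution $s\colon S_g\to S_g$ given by the rotation by~$\pi$ about the horizontal axis of Figure~\ref{fig_beta_delta}, which acts on~$H_1(S_g)$ as~$-1$ and so represents the generator of~$\widehat{\I}_g/\I_g$. I would then choose the non-separating curve~$v$ to be invariant under~$s$ with $s$ reversing its orientation (e.g. a curve meeting the rotation axis in two points), so that $s\in\Stab_{\widehat{\I}_g}(v)\setminus\Stab_{\I_g}(v)$ represents the generator of the $\Z/2\Z$ acting on $H_*(\Stab_{\I_g}(v))$. Finally I would arrange the embedded subsurface $\varphi(S_{g-1,1})\subset S_g\setminus v$ to be $s$-invariant, so that $s$ restricts to an involution $s'$ of~$S_{g-1,1}$ acting as~$-1$ on~$H_1(S_{g-1,1})$ and as rotation by~$\pi$ on~$\partial S_{g-1,1}$; by definition $s'$ is then an element of $\widehat{\I}_{g-1,1}\setminus\I_{g-1,1}$, and, placing the forgotten puncture at a fixed point of~$s'$, it likewise represents the generators for~$\widehat{\I}_{g-1,*}$ and~$\widehat{\I}_{g-1}$.

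With these compatible choices each map in~\eqref{eq_sequence} is induced by $s$-equivariant data. For the two injections this is immediate: the inclusion $\Stab_{\I_g}(v)\hookrightarrow\I_g$ is the restriction of conjugation by~$s$ and hence intertwines the involutions, while $\varphi$ was chosen $s$-equivariant, i.e. $s\circ\varphi=\varphi\circ s'$, so that $\varphi_*\colon\I_{g-1,1}\to\Stab_{\I_g}(v)$ intertwines conjugation by~$s'$ and by~$s$; passing to homology gives equivariance. For the two isomorphisms, which arise as edge homomorphisms of the Hochschild--Serre spectral sequences of the central extension~\eqref{eq_exact} (for genus~$g-1$) and of the Birman-type extension underlying Fact~\ref{fact_emb1}, I would note that conjugation by~$s'$ is an automorphism of the \emph{entire} short exact sequence: it fixes the central~$\Z$ generated by the boundary twist~$T_\varepsilon$ (since $s'$ is orientation preserving and preserves~$\varepsilon$) and preserves the normal point-pushing subgroup~$\pi_1$. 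Therefore~$s$ acts on each spectral sequence and the natural edge isomorphisms are equivariant.

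The hard part will be the geometric bookkeeping in the second paragraph: one must arrange~$s$, $v$, and~$\varphi$ \emph{simultaneously} so that the single involution restricts to exactly the extended-Torelli structure on every subsurface, in particular verifying the boundary-rotation condition defining~$\widehat{\I}_{g-1,1}$ and the fixed-point placement of the puncture needed for the Birman sequence. One must also check that~$s$ acts trivially on the relevant coefficient groups ($H_1(\Z)$ and $H_2$ of the surface group, where the action is trivial because $s$ is orientation preserving), so that the edge isomorphisms are genuinely isomorphisms of $\Z/2\Z$-modules. Once this compatibility is established, the equivariance of the whole composite in~\eqref{eq_sequence} follows formally by splicing the five equivariant squares.
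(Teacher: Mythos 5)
Your proof is correct and follows essentially the same route as the paper: the two injections are handled by group-level compatibility with the extended Torelli groups (conjugation intertwining), and the two isomorphisms by letting an involution act on the short exact sequences~\eqref{eq_ses} and~\eqref{eq_exact} and hence on their Hochschild--Serre spectral sequences, the point being that the action on the coefficient groups $H_2(S_{g-1})$ and~$\Z$ is trivial. The only difference is organizational: you insist on one globally compatible involution~$s$ on~$S_g$, whereas the paper exploits exactly the coset-independence you state in your first paragraph to pick a convenient involution separately for each map, so the simultaneous geometric bookkeeping you flag as ``the hard part'' is in fact unnecessary.
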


\begin{proof}
Recall that the sequence of homomorphisms~\eqref{eq_mono_sequence} depends on the choice of a non-separating simple closed curve~$v$ on~$S_{g+1}$ and an embedding $\varphi\colon S_g^1\hookrightarrow S_{g+1}\setminus v$. We identify~$S_g^1$ with its image under~$\varphi$ and arrange the surface~$S_{g+1}$, the curve $v$, and the subsurface $S_g^1\subset S_{g+1}\setminus v$ as it is shown in Fig.~\ref{fig_s}. Further, we identify $S_{g,1}$ with $S_g^1\setminus \partial S_g^1$. Consider the rotation~$s$ by~$\pi$ about the horizontal axis in Fig.~\ref{fig_s}. (Such~$s$ is called a \textit{topological hyperelliptic involution}.) Then, the mapping class~$s$ belongs to~$\widehat{\I}_{g+1}\setminus\I_{g+1}$, stabilizes~$v$, and the restrictions of~$s$ to~$S_g^1$ and~$S_{g,1}$ belong to~$\widehat{\I}_g^1\setminus\I_g^1$ and~$\widehat{\I}_{g,1}\setminus\I_{g,1}$, respectively. So the involution~$s$ defines the required $\Z/2\Z$-actions on all groups in the sequence~\eqref{eq_mono_sequence}. It follows immediately that the last two homomorphisms in~\eqref{eq_mono_sequence} are $\Z/2\Z$-equivariant.
\begin{figure}
\begin{tikzpicture}[scale=.8]

\fill[violet!7] (1.5,-1.5) arc (270:90:0.3 and 1.5) -- (6,1.5) arc (90:-90:1.5) -- cycle
(3,0) circle (.4) (6,0) circle (.4);
\fill[violet!15] (1.5,-1.5) arc (-90:90:0.3 and 1.5) -- (6,1.5) arc (90:-90:1.5) -- cycle
(3,0) circle (.4) (6,0) circle (.4);

\draw[violet, very thick] (1.5,-1.5) arc (270:90:0.3 and 1.5);
\draw[violet, very thick, dashed] (1.5,-1.5) arc (-90:90:0.3 and 1.5);
\draw[violet] (4.5,-1) node
{$S_g^1$};

\draw[red, thick] (-1.5,0) arc (180:360:.55 and .15) node[pos=0.5,below] {$v$};
\draw[red, thick, dashed] (-.4,0) arc (0:180:.55 and .15);

\draw[black] (-3,0) -- (-1.8,0);
\draw[black] (7.8,0) -- (9,0);

\draw[black,->] (9.15,-.65) arc (-70:150:.2 and .7) node[pos=.3,right]{$s$};

\draw[black, ultra thick] (0,1.5) -- (6,1.5) arc (90:-90:1.5) -- (0,-1.5) arc (270:90:1.5);
\draw[black, ultra thick] (0,0) circle (.4);
\draw[black, ultra thick] (3,0) circle (.4);
\draw[black, ultra thick] (6,0) circle (.4);

\fill[black] (4.5,0) circle (1.5pt);
\fill[black] (4.8,0) circle (1.5pt);
\fill[black] (4.2,0) circle (1.5pt);

\end{tikzpicture}
 \caption{The hyperelliptic involution $s$}\label{fig_s}
\end{figure}

The first two isomorphisms in~\eqref{eq_mono_sequence} come from the Lyndon--Hochschild--Serre spectral sequences for the short exact sequences~\eqref{eq_ses} and~\eqref{eq_exact}, respectively. The hyperelliptic involution~$s$ defines the action of~$\Z/2\Z$ on these short exact sequences and hence on the corresponding Lyndon--Hochschild--Serre spectral sequences. Since the actions of~$s$ on~$\HH_2(\pi_1(S_g))=\HH_2(S_g)$ and on the group~$\Z$ in~\eqref{eq_exact} are trivial, we obtain that the isomorphisms in the sequence~\eqref{eq_mono_sequence} are  $\Z/2\Z$-equivariant.
\end{proof}

\begin{cor}\label{cor_invariant}
All Bestvina--Bux--Margalit homology classes in the groups~$\HH_{3g-5}(\I_g)$, $\HH_{3g-3}(\I_{g,1})$, $\HH_{3g-2}(\I_g^1)$, and~$\HH_{3g-2}\bigl(\Stab_{\I_{g+1}}(v)\bigr)$ are $\Z/2\Z$-invariant. So after the localization away from~$2$, they lie in the groups~$\HH_{3g-5}(\I_g;\Z[1/2])^+$, $\HH_{3g-3}(\I_{g,1};\Z[1/2])^+$, $\HH_{3g-2}(\I_g^1;\Z[1/2])^+$, and~$\HH_{3g-2}\bigl(\Stab_{\I_{g+1}}(v);\Z[1/2]\bigr)^+$ respectively.
\end{cor}

\begin{proof}
The group~$\I_2$ is generated by Dehn twists about separating curves. For each separating curve $\delta$ on~$S_2$, there exists an involution $s\in\Mod(S_2)$ that acts on $\HH_1(S_2)$ as~$-1$ and takes~$\delta$ to itself. It follows that the action of~$\Z/2\Z=\widehat{\I}_2/\I_2$ on~$\HH_1(\I_2)$ is trivial. So the required assertion follows from Proposition~\ref{propos_equivariant}.
\end{proof}

\subsection{Systems of linearly independent  classes in $\HH_{3g-5-n}(\I_g)$ for $1\le n\le g-3$.}
\label{subsection_A2}
We need the following generalization of the construction of abelian cycles.  
Suppose that~$K$ is a subgroup of a group~$G$. Let  $h_1,\ldots,h_k$ be pairwise commuting elements of~$G$ belonging to the centralizer of~$K$ and $\xi\in \HH_l(K)$ be an arbitrary homology class. Consider the homomorphism $\chi\colon \Z^k\times K\to G$ that sends  the generator of the $i$\textsuperscript{th} factor~$\Z$ to~$h_i$ for every~$i$ and is the identity on~$K$. We put
$$
\CA(h_1,\ldots,h_k;\xi)=\chi_*(\mu_k\times\xi)\in \HH_{k+l}(G),
$$
where $\mu_k$ is the standard generator of the group $\HH_k(\Z^k)\cong \Z$.

Suppose that $1\le n\le g-3$. Let $\CS_n$ be the set of all $(n+1)$-tuples $\CU=(U_0,\ldots,U_n)$ such that
\begin{itemize}
\item $\HH_1(S_g)=U_0\oplus\cdots\oplus U_n$,
\item the summands~$U_i$ are pairwise orthogonal with respect to the intersection form,
\item $\rank U_i=2$ for $i=0,\ldots,n-1$, and $\rank U_n=2g-2n$,
\item all summands in the decomposition
\begin{equation}\label{eq_x_decompose2}
x=x_0+\cdots+x_n,\qquad x_i\in U_i,
\end{equation}
are nonzero.
\end{itemize}

The following analog of Proposition~\ref{propos_infinite1} is straightforward.

\begin{propos}\label{propos_infinite2}
The set~$\CS_n$ is infinite.
\end{propos}

Now, we are going to give analogs of definitions and results from Subsection~\ref{subsection_A} for a splitting $\CU\in\CS_n$.

Every summand in the decomposition~\eqref{eq_x_decompose2} can be uniquely written as $x_i=l_ia_i$, where $a_i$ is primitive and~$l_i>0$. Unlike the case in Subsection~\ref{subsection_A}, now $a_0,\ldots,a_n$ is not a basis of a Lagrangian subspace of~$\HH_1(S_g)$ but only a part of such a basis. We denote by~$\fB(\CU)$ the set of all $n$-tuples $\bb=(b_1,\ldots,b_n)$ such that $b_i\in U_i$ and $a_i\cdot b_i=1$ for every $i=1,\ldots,n$. Unlike the case in Subsection~\ref{subsection_A}, we need to choose an element~$b_n$ in the last subgroup~$U_n$ but still do not need to choose~$b_0$  in $U_0$.

\begin{definition}
A \textit{$\delta$-multicurve} for~$\CU\in\CS_n$ is an $n$-component multicurve 
$$
\Delta=\delta_1\cup\cdots\cup\delta_{n}
$$
that satisfies the following conditions:
 \begin{enumerate}
 \item $\delta_1,\ldots,\delta_n$ are separating simple closed curves,
 \item $S_g\setminus\Delta$ consists of $n+1$ connected components $X_0,\ldots,X_{n}$ such that
\begin{itemize} 
\item $X_0$ is a once-punctured torus adjacent to~$\delta_1$,
 \item for every $i=1,\ldots,n-1$, the component~$X_i$ is a twice-punctured torus adjacent to~$\delta_{i}$ and~$\delta_{i+1}$,
 \item $X_n$ is a once-punctured oriented surface of genus~$g-n$ adjacent to~$\delta_n$,
 \end{itemize}
 \item the image of the homomorphism $\HH_1(X_i)\to \HH_1(S_g)$ induced by the inclusion is~$U_i$.
 \end{enumerate}
\end{definition}

\begin{definition}
A \textit{$\beta$-multicurve} for $\CU\in\CS_n$ compatible with a $\delta$-multicurve~$\Delta$ is a $2n$-component oriented multicurve 
$$
B=\beta_1\cup\beta_1'\cup\cdots\cup\beta_{n}\cup\beta_{n}'
$$
that satisfies the following conditions:
\begin{enumerate}
\item $\beta_i$ and $\beta_i'$ are homologous to each other oriented simple closed curves contained in~$X_i$,
\item the $n$-tuple $\bb=\bigl([\beta_1],\ldots,[\beta_{n}]\bigr)$ belongs to~$\fB(\CU)$,
\item the connected component of~$X_n\setminus(\beta_n\cup\beta_n')$ that is adjacent to~$\delta_n$ is a three-punctured sphere,
\end{enumerate}
see Fig.~\ref{fig_beta_delta2}. The union $\Gamma=B\cup\Delta$ will be called a \textit{$\beta\delta$-multicurve} for~$\CU$.
\end{definition}
 \begin{figure}
\begin{tikzpicture}[scale=.8]

\draw[mygreen, thick] (1.5,-1.5) arc (270:90:0.3 and 1.5)
node[pos=0,below]{$\delta_1$};
\draw[mygreen, thick, dashed] (1.5,-1.5) arc (-90:90:0.3 and 1.5);

\draw[mygreen, thick] (4.5,-1.5) arc (270:90:0.3 and 1.5)
node[pos=0,below]{$\delta_2$};
\draw[mygreen, thick, dashed] (4.5,-1.5) arc (-90:90:0.3 and 1.5);

\draw[mygreen, thick] (7.5,-1.5) arc (270:90:0.3 and 1.5)
node[pos=0,below]{$\delta_n$};
\draw[mygreen, thick, dashed] (7.5,-1.5) arc (-90:90:0.3 and 1.5);

\fill[violet!7] (10.5,-1.5) arc (270:90:0.3 and 1.5) -- (15,1.5) arc (90:-90:1.5) -- cycle
(12,0) circle (.4) (15,0) circle (.4);
\fill[violet!15] (10.5,-1.5) arc (-90:90:0.3 and 1.5) -- (15,1.5) arc (90:-90:1.5) -- cycle
(12,0) circle (.4) (15,0) circle (.4);

\draw[violet, very thick] (10.5,-1.5) arc (270:90:0.3 and 1.5)
node[pos=0,below]{$\varepsilon$};
\draw[violet, very thick, dashed] (10.5,-1.5) arc (-90:90:0.3 and 1.5);

\draw[mygreen, thick, snake=brace] (-1.4,1.75) -- (1.4,1.75) 
node[pos=.5,above]{$X_0$};
\draw[mygreen, thick, snake=brace] (1.6,1.75) -- (4.4,1.75)
node[pos=.5,above]{$X_1$};
\draw[violet, thick, snake=brace] (7.6,1.75) -- (10.4,1.75)
node[pos=.5,above]{$Z$};
\draw[violet, thick, snake=brace] (10.6,1.75) -- (16.4,1.75)
node[pos=.5,above]{$\psi\bigl(S_{g'}^1\bigr)$};
\draw[mygreen, thick, snake=brace] (7.6,2.75) -- (16.4,2.75)
node[pos=.5,above]{$X_n$};

\draw[myblue, thick] (3,1.5) arc (90:270:.15 and .55); 
\draw[myblue, thick,->] (3,1.5) arc (90:185:.15 and .55); 
\draw[myblue, thick, dashed] (3,1.5) arc (90:-90:.15 and .55)
node[pos=0.5,right]{\small{${}\!\beta_{1}$}}; 

\draw[myblue, thick] (3,-.4) arc (90:270:.15 and .55); 
\draw[myblue, thick,->] (3,-1.5) arc (270:175:.15 and .55); 
\draw[myblue, thick, dashed] (3,-.4) arc (90:-90:.15 and .55)
node[pos=0.5,right]{\small{${}\!\beta_{1}'$}}; 

\draw[myblue, thick] (9,1.5) arc (90:270:.15 and .55); 
\draw[myblue, thick,->] (9,1.5) arc (90:185:.15 and .55); 
\draw[myblue, thick, dashed] (9,1.5) arc (90:-90:.15 and .55)
node[pos=0.5,right]{\small{${}\!\beta_{n}$}}; 

\draw[myblue, thick] (9,-.4) arc (90:270:.15 and .55); 
\draw[myblue, thick,->] (9,-1.5) arc (270:175:.15 and .55); 
\draw[myblue, thick, dashed] (9,-.4) arc (90:-90:.15 and .55)
node[pos=0.5,right]{\small{${}\!\beta_{n}'$}};

\draw[black, ultra thick] (0,1.5) -- (15,1.5) arc (90:-90:1.5) -- (0,-1.5) arc (270:90:1.5);
\draw[black, ultra thick] (0,0) circle (.4);
\draw[black, ultra thick] (3,0) circle (.4);
\draw[black, ultra thick] (9,0) circle (.4);
\draw[black, ultra thick] (12,0) circle (.4);
\draw[black, ultra thick] (15,0) circle (.4);

\fill[black] (6,0) circle (1.5pt);
\fill[black] (6.3,0) circle (1.5pt);
\fill[black] (5.7,0) circle (1.5pt);
\fill[black] (13.5,0) circle (1.5pt);
\fill[black] (13.8,0) circle (1.5pt);
\fill[black] (13.2,0) circle (1.5pt);

\end{tikzpicture}
\caption{A $\beta\delta$-multicurve and a compatible embedding~$\psi$ for $\CU\in\CS_n$}\label{fig_beta_delta2}
\end{figure}

We use the same convention on which of the two components of~$\Delta$ in~$X_i$ is~$\beta_i$ and which is~$\beta'_i$, namely,
 the connected component of~$X_i\setminus(\beta_i\cup\beta_i')$ that is adjacent to~$\delta_i$ lies on the right-hand side from~$\beta_i$ and on the left-hand side from~$\beta'_i$.

We put $g'=g-n-1$; then $g'\ge 2$. Consider an oriented compact surface~$S_{g'}^1$
of genus~$g'$ with one boundary component. 

\begin{definition}
An orientation preserving embedding $\psi\colon S_{g'}^1\hookrightarrow S_g$ is said to be \textit{compatible} with a $\beta\delta$-multicurve~$\Gamma$ for~$\CU$ if it satisfies the following conditions:
\begin{enumerate}
\item the image of~$\psi$ is disjoint from~$\Gamma$ (and hence is contained in~$X_n$),
\item the homology class~$a_n$ can be represented by a simple closed curve lying in~$Z=X_n\setminus\psi\bigl(S_{g'}^1\bigr)$.
\end{enumerate}
\end{definition}

Note that the latter condition in this definition is equivalent to the requirement that $a_n$, $[\beta_n]$ is a basis of the image of the homomorphism $\HH_1(Z)\to \HH_1(S_g)$ induced by the inclusion.

As in Subsection~\ref{subsection_A}, we easily see that
\begin{itemize}
\item for each $\CU\in\CS_n$, there exists a $\delta$-multicurve~$\Delta$ for~$\CU$,
\item for each pair $(\CU,\Delta)$ there exists a $\beta$-multicurve~$B$ for~$\CU$ compatible with~$\Delta$ and with any prescribed set of homology classes of components $\bb\in\fB(\CU)$,
\item for each $\CU\in\CS_n$ and each $\beta\delta$-multicurve~$\Gamma$ for~$\CU$, there exists a compatible embedding~$\psi\colon S_{g'}^1\hookrightarrow S_g$.
\end{itemize}
To prove the last assertion note that we can first choose a simple closed curve~$\alpha_n\subset X_n$ in homology class~$a_n$ such that~$\alpha_n$ has geometric intersection number~$1$ with each of the curves~$\beta_n$ and~$\beta_n'$, and then choose a subsurface $\psi\bigl(S_{g'}^1\bigr)\subset X_n$ disjoint from~$\alpha_n$, $\beta_n$ and~$\beta'_n$.

We denote by~$\mathfrak{P}(\CU)$ the set of all pairs~$(\Gamma,\psi)$ such that $\Gamma$ is a $\beta\delta$-multicurve for~$\CU$ and $\psi\colon S_{g'}^1\hookrightarrow S_g$ is an embedding compatible with~$\Gamma$.

The embedding $\psi$ induces an injection $\psi_*\colon\I_{g'}^1\to\I_g$ by extending homeomorphisms by the identity. The mapping classes $T_{\beta_i}T_{\beta_i'}^{-1}$ and~$T_{\delta_i}$, where $i=1,\ldots,n$, pairwise commute and belong to the centralizer of the image of~$\psi_*$. Therefore, for each homology class $\xi\in \HH_{3g'-2}\bigl(\I_{g'}^1\bigr)$, we obtain a well-defined homology class
\begin{equation*}
A_{\Gamma,\psi,\xi}=\CA\bigl(
T_{\beta_1}T_{\beta_1'}^{-1},\ldots,T_{\beta_{n}}T_{\beta_{n}'}^{-1},
T_{\delta_1},\ldots, T_{\delta_{n}};\psi_*(\xi)
\bigr)\in \HH_{3g-5-n}(\I_g).
\end{equation*}

By Theorem~\ref{theorem_BBM2} we have that $\cd\bigl(\I_{g'}^1\bigr)=3g'-2$ and the top homology group $\HH_{3g'-2}(\I_{g',1})$ contains a free abelian subgroup of infinite rank.

\begin{theorem}\label{theorem_explicit}
Suppose that $1\le n\le g-3$. For each $\CU\in\CS_n$, choose an arbitrary pair $(\Gamma_{\CU},\psi_{\CU})\in\mathfrak{P}(\CU)$. Let $\{\xi_{\lambda}\}_{\lambda\in\Lambda}$ be an infinite system of linearly independent  homology classes in~$\HH_{3g'-2}\bigl(\I_{g'}^1\bigr)$.
Then the homology classes~$A_{\Gamma_{\CU},\psi_{\CU},\xi_{\lambda}}$, where $\CU$ runs over~$\CS_n$ and $\lambda$ runs over~$\Lambda$, are linearly independent in $\HH_{3g-5-n}(\I_g)$.
\end{theorem}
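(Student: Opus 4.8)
The plan is to pin each class $A_{\CU,\Gamma_{\CU},\psi_{\CU},\lambda}$ to the position $(n,3g-5-2n)$ of the Cartan--Leray spectral sequence $E^*_{*,*}$ for the action of $\I_g$ on $\B_g$, and to establish its nontriviality there by transporting it, through the morphisms $E^*_{*,*}\to E^*_{*,*}(\fN)\to\hE^*_{*,*}(\fN)$ constructed in Section~\ref{section_aux_spectral}, into an auxiliary spectral sequence in which the obstructing differentials vanish for dimensional reasons. Fix $\CU\in\CS_n$ and write $\Gamma=\Gamma_{\CU}$, $\psi=\psi_{\CU}$, and $x_i=l_ia_i$ with $a_i$ primitive and $l_i>0$. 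The first, geometric step is to exhibit a cell of~$\B_g$ carrying our class. Choose disjoint simple closed curves $c_0,\ldots,c_n$ with $[c_i]=a_i$ and $c_i\subset X_i$, where $c_n\subset Z$; then the multicurve $c_0\cup\cdots\cup c_n$ with weights $l_0,\ldots,l_n$ is a cycle representing~$x$ and determines a cell of~$\B_g$. I would verify that the separating twists $T_{\delta_i}$ and every mapping class supported in~$\psi(S_{g',1})$ fix this cell, since their supports miss all the curves~$c_j$, so that $\langle T_{\delta_1},\ldots,T_{\delta_n}\rangle\times\psi_*(\I_{g',1})$ is contained in its stabiliser, whereas each $T_{\beta_i}T_{\beta_i'}^{-1}$ genuinely moves it, because $a_i\cdot[\beta_i]=1$ forces $c_i$ to cross both $\beta_i$ and~$\beta_i'$.

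Restricting the action to the commuting data that defines the class, the free abelian group $\langle T_{\beta_1}T_{\beta_1'}^{-1},\ldots,T_{\beta_n}T_{\beta_n'}^{-1}\rangle\cong\Z^n$ translates the chosen cell with torus quotient, exactly as in the Johnson--Millson and Bestvina--Bux--Margalit abelian cycles, and so contributes the base degree~$n$, while $T_{\delta_1},\ldots,T_{\delta_n}$ together with $\psi_*(\xi_\lambda)$ remain in the fibre. This shows that $A_{\CU,\Gamma_{\CU},\psi_{\CU},\lambda}$ lies in filtration degree~$\le n$, and I would compute its image in $E^\infty_{n,3g-5-2n}$: under the identification $E^1_{n,*}\cong\bigoplus_{[\sigma]}H_*(\Stab_{\I_g}(\sigma);\widetilde{\Z})$ with orientation coefficients, this image is supported on the summand of the cell~$\sigma_{\CU}$ above and is the fibre class $\CA(T_{\delta_1},\ldots,T_{\delta_n};\psi_*(\xi_\lambda))\in H_{3g-5-2n}(\Stab_{\I_g}(\sigma_{\CU}))$. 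The auxiliary sequence is decisive at this point: choosing $\fN$ large enough to see the finitely many classes entering a prospective linear relation, the differentials into and out of~$(n,3g-5-2n)$ in~$\hE^*_{*,*}(\fN)$ vanish by dimension, so that $\hE^\infty_{n,3g-5-2n}=\hE^1_{n,3g-5-2n}$ is an explicit direct sum of stabiliser homology groups. It then suffices to prove linear independence of the images there, since independence in~$\hE^\infty$ forces independence in~$E^\infty$ and hence in $H_{3g-5-n}(\I_g)$.

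The independence in~$\hE^1_{n,3g-5-2n}$ I would prove in two stages. Across splittings, distinct $\CU,\CU'\in\CS_n$ produce cells in distinct $\I_g$--orbits: such a cell remembers the ordered orthogonal system $U_0,\ldots,U_n\subseteq H_1(S_g)$ and the primitive classes~$a_i$, and since $\I_g$ acts trivially on~$H_1(S_g)$ no element of it can carry one homological configuration to another; hence different~$\CU$ contribute to different summands and cannot interfere. Within a single orbit, I would show that $\lambda\mapsto\CA(T_{\delta_1},\ldots,T_{\delta_n};\psi_*(\xi_\lambda))$ has linearly independent image. By a K\"unneth splitting over the product $\langle T_{\delta_1},\ldots,T_{\delta_n}\rangle\times\psi_*(\I_{g',1})$ this reduces to the linear independence of the classes~$\xi_\lambda$ in $H_{3g'-2}(\I_{g',1})$, furnished by Proposition~\ref{propos_BBM}, together with injectivity of~$\psi_*$ on top homology, which is of the same nature as the injectivity established in Proposition~\ref{propos_injection}.

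The main obstacle is precisely the control of the higher differentials $d^2,d^3,\ldots$ of~$E^*_{*,*}$, which cannot be computed directly; the entire construction of~$\hE^*_{*,*}(\fN)$ exists to make these differentials disappear for dimension reasons, so that survival of our classes to~$\hE^\infty$ becomes automatic and only the transparent computation in~$\hE^1$ remains. The residual difficulties are of a bookkeeping nature: arranging a single~$\fN$ that simultaneously detects all classes occurring in a given finite relation while preserving the vanishing-by-dimension property; verifying that the abelian cycle of the $\beta$--twists really sweeps out a nondegenerate torus, so that the base degree is exactly~$n$ and not smaller; and checking that the cross product with this torus class does not collapse in the passage from~$E^1$ to~$E^\infty$. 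All of these are handled inside the auxiliary spectral sequence rather than confronted in~$E^*_{*,*}$ itself.
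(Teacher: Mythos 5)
Your high-level strategy is the paper's: place each class in filtration position $(n,3g-5-2n)$ and push it through $E^*_{*,*}\to E^*_{*,*}(\fN)\to\hE^*_{*,*}(\fN)$, where the relevant differentials vanish for dimension reasons. But the concrete geometric step fails. The multicurve $c_0\cup\cdots\cup c_n$ that you propose has $n+1$ components whose homology classes $a_0,\ldots,a_n$ are linearly independent, so by the dimension formula $\dim P_M=|M|-D(M)$ its cell is a \emph{vertex} of~$\B_g$, not an $n$-cell. A $0$-cell indexes a summand of~$E^1_{0,*}$, so your assertion that the image of $A_{\CU,\Gamma,\psi,\lambda}$ in $E^\infty_{n,3g-5-2n}$ is ``supported on the summand of~$\sigma_{\CU}$'' is incoherent, and a $\Z^n$-orbit of a vertex has no torus quotient to contribute base degree~$n$. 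What is needed is the $(2n+1)$-component multicurve $N=\alpha_0\cup\alpha_1\cup\alpha_1'\cup\cdots\cup\alpha_n\cup\alpha_n'$ with $\alpha_i'=\bigl(T_{\beta_i}T_{\beta_i'}^{-1}\bigr)(\alpha_i)$ homologous and disjoint from~$\alpha_i$: its cell $P_N$ is an $n$-cube whose opposite facets are translated onto each other by the elements $f_i=T_{\beta_i}T_{\beta_i'}^{-1}$, and only with this structure does the comparison with the cubulated~$\R^n$ (Lemma~\ref{propos_spectral}) yield $A_{\CU,\Gamma,\psi,\lambda}\in\CF_{n,3g-5-2n}$ with image $\iota_{P_N}\bigl(\CA(T_{\delta_1},\ldots,T_{\delta_n};\psi_*(\xi_{\lambda}))\bigr)$ in~$E^\infty_{n,3g-5-2n}$.

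The second, more serious error is your separation of distinct splittings. A cell of~$\B_g$ remembers only the homology classes~$a_i$ and weights~$l_i$ of its components, \emph{not} the subgroups~$U_i$: the shifted splittings $\CU[\bk]$ have exactly the same classes~$a_i$ as~$\CU$ and are compatible with the same orbit~$\fN_{\CU}$ (Proposition~\ref{propos_compatible_N2}), so their cells lie in a single $\I_g$-orbit and hit the \emph{same} summand of~$\hE^1$. Thus ``different~$\CU$ contribute to different summands and cannot interfere'' is false precisely where the real difficulty sits. The paper resolves it inside that single summand: one computes $H_N\cong\F_{\infty}\times\cdots\times\F_{\infty}\times G$ (Proposition~\ref{propos_stabilizer2}), observes that shifted splittings give abelian cycles $\CA(y_{1,k_1},\ldots,y_{n,k_n};\psi_*(\xi_{\lambda}))$ on distinct free generators $y_{i,k}=T_{\gamma_i}^kT_{\delta_i}T_{\gamma_i}^{-k}$, and deduces independence from the K\"unneth theorem together with injectivity of $\psi_*$ on top homology (Proposition~\ref{propos_eta}). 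Your K\"unneth reduction over the subgroup $\langle T_{\delta_1},\ldots,T_{\delta_n}\rangle\times\psi_*(\I_{g',1})$ is not enough for this, since independence in the homology of a subgroup does not imply independence after pushing forward to the stabilizer; one needs the product decomposition of the full group~$H_N$. Finally, ``choosing $\fN$ large enough'' has no meaning: $\fN$ is a single $\I_g$-orbit, and a finite relation is dispatched by applying one morphism $\Phi_{\fN}$ per orbit occurring in it, which in turn requires proving (as in Proposition~\ref{propos_Phi2}) that $\Phi_{\fN}$ annihilates every class with $\fN_{\CU}\ne\fN$ — a vanishing statement your outline assumes rather than establishes.
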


Since the sets $\CS_n$ and~$\Lambda$ are infinite,  Theorem~\ref{theorem_main} follows from Theorem~\ref{theorem_explicit}. (In fact, Theorem~\ref{theorem_main} would follow even if only one of the two sets $\CS_n$ and~$\Lambda$ were infinite.) We will prove Theorem~\ref{theorem_explicit} in Section~\ref{section_main_proof}.

\begin{propos}\label{propos_pm_2}
Suppose that $\xi\in \HH_{3g'-2}\bigl(\I_{g'}^1\bigr)$ is a Bestvina--Bux--Margalit homology class. Then
the class $A_{\Gamma,\psi,\xi}$ localized away from~$2$ lies in $\HH_{3g-5-n}(\I_g;\Z[1/2])^-$ if $n$ is odd and  lies in $\HH_{3g-5-n}(\I_g;\Z[1/2])^+$ if $n$ is even.
\end{propos}

\begin{proof}
Let $s$ be the rotation of~$S_g$  around the horizontal axis by~$\pi$ that stabilizes every curve~$\delta_i$ and the subsurface~$\psi\bigl(S_{g'}^1\bigr)$ and swaps the curves in every pair~$\{\beta_i,\beta_{i}'\}$ (cf. Fig.~\ref{fig_s} in Subsection~\ref{subsection_BBM}). Then $s$ acts on~$\HH_1(S_g)$ as~$-1$, hence, $s\in\widehat{\I}_g$ and $s\notin\I_g$. Therefore $\HH_{3g-5-n}(\I_g;\Z[1/2])^{\pm}$ are the eigenspaces with eigenvalues $\pm 1$ for the action of~$s$ on~$\HH_{3g-5-n}(\I_g;\Z[1/2])$. By Corollary~\ref{cor_invariant}, we have $s_*(\xi)=\xi$.
Consequently,
\begin{equation*}
s_*(A_{\Gamma,\psi,\xi})=\CA\bigl(
T_{\beta_1}^{-1}T_{\beta_1'},\ldots,T_{\beta_{n}}^{-1}T_{\beta_{n}'},
T_{\delta_1},\ldots, T_{\delta_{n}};\psi_*(\xi)\bigr)=(-1)^{n}A_{\Gamma,\psi,\xi},
\end{equation*}
which implies the required assertion.
\end{proof}

The assertion of Theorem~\ref{theorem_pm} for $k>2g-3$ follows immediately from Theorem~\ref{theorem_explicit} and Proposition~\ref{propos_pm_2}.

\section{Complex of cycles}\label{section_cycles}

In this section we recall in a convenient for us form some definitions and results of~\cite{BBM07}.

Let~$\cur$ be the set of isotopy classes of oriented non-separating simple closed curves on~$S_g$. Consider the cone~$\R_+^{\cur}$ consisting of all finite formal linear combinations $\sum_{\gamma\in\cur}l_{\gamma}\gamma$, where $l_{\gamma}$ are nonnegative real numbers. Endow~$\R_+^{\cur}$ with the standard direct limit topology. Notice that if $\gamma$ and $\widehat\gamma$ are two isotopy classes in~$\cur$ obtained from each other by reversing the orientation of the curve, then $\gamma$ and $\widehat\gamma$ are two different (linearly independent) basis vectors of~$\R_+^{\cur}$, so  $\widehat\gamma\ne -\gamma$.

Recall that we have fixed a primitive homology class~$x\in \HH_1(S_g)$.  A \textit{simple $1$-cycle} for~$x$ is  a finite linear combination $\sum_{i=1}^s l_i\gamma_i\in\R_+^{\cur}$, where $l_i>0$ and $\gamma_1,\ldots,\gamma_s$ are pairwise different isotopy classes in~$\cur$, that satisfies the following conditions:
\begin{enumerate}
\item the isotopy classes~$\gamma_1,\ldots,\gamma_s$ contain representatives such that the  union of these representatives is an oriented multicurve,
\item $\sum_{i=1}^s l_i[\gamma_i]=x$.
\end{enumerate} 
The oriented multicurve in condition~(1) (or its isotopy class) is called the \textit{support} of the simple $1$-cycle $\sum_{i=1}^s l_i\gamma_i$. A simple $1$-cycle $\sum_{i=1}^s l_i\gamma_i$ is called \textit{integral} if all coefficients~$l_i$ are integral.  A simple $1$-cycle $\sum_{i=1}^s l_i\gamma_i$ is called a
\textit{basic 1-cycle} if the homology classes $[\gamma_1],\ldots,[\gamma_s]$ are linearly independent. Notice that a basic $1$-cycle for the integral homology class $x$ is always integral.

Denote by~$\M$ the set of isotopy classes of oriented multicurves~$M$ on~$S_g$ satisfying the following conditions: 
\begin{enumerate}
\item For each component~$\gamma$ of~$M$, there exists a basic $1$-cycle for~$x$ whose support  is contained in~$M$ and contains~$\gamma$.

\item Any non-trivial linear combination of the homology classes of components of~$M$ with nonnegative coefficients is nonzero.
\end{enumerate}

For an oriented multicurve~$M\in\M$, let~$P_M$ be the convex hull in~$\R_+^{\cur}$ of all basic $1$-cycles for~$x$ with supports contained in~$M$. Since there are finitely many such basic $1$-cycles, $P_M$ is a finite-dimensional convex polytope.  It is easy to see that $P_M$ is exactly the set of all simple $1$-cycles for~$x$ with supports contained in~$M$. 
If $M,N\in\M$ and $N\subset M$, then $P_N$ is a face of~$P_M$. (Hereafter, the notation $N\subset M$ implies that the orientations of components of~$N$ are the same as their orientations in~$M$.) Vice versa, any face of~$P_M$ is~$P_N$ for certain oriented multicurve $N\in\M$ contained in~$M$.
Besides, it is not hard to check that the intersection of any two polytopes~$P_{M_1}$ and~$P_{M_2}$, where $M_1,M_2\in\M$, is either empty or a face of both of them. The union of all polytopes~$P_M$ with $M\in\M$ is a regular cell complex~$\B_g=\B_g(x)$, which was introduced by Bestvina, Bux, and Margalit~\cite{BBM07}  and was called by them the \textit{complex of cycles}. Denote by~$C_*(\B_g)$ the cellular chain complex of~$\B_g$ with integral coefficients.

\begin{remark}
Bestvina, Bux, and Margalit described the construction of~$\B_g$ in a slightly different way. Namely, they defined~$\B_g$ to be the quotient space of the disjoint union of the cells~$P_M$ with $M\in\M$ by identifying faces that are equal in $\R^{\cur}$ and endowing the quotient with the weak topology. These two constructions are equivalent to each other.
\end{remark}

For $M\in\M$, denote by $|M|$ the number of components of~$M$, by $c(M)$ the number of components of~$S_g\setminus M$, and by $D(M)$ the rank of the subgroup of $\HH_1(S_g)$ generated by the homology classes of components of~$M$. By~\cite[Lemma~2.1]{BBM07},
\begin{equation}\label{eq_dim}
\dim P_M=|M|-D(M)=c(M)-1.
\end{equation} 
We denote by~$\M_m$ the subset of~$\M$ consisting of all~$M$ such  that $\dim P_M=m$.

\begin{theorem}[Bestvina, Bux, Margalit~\cite{BBM07}]\label{theorem_BBM} The complex of cycles~$\B_g$ is contractible, provided that~$g\ge 2$.
\end{theorem}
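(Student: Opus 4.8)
The plan is to exhibit an explicit deformation retraction of $\B_g$ onto a single vertex, overcoming the failure of $\B_g$ to be star-shaped by means of a surgery flow. Since $x$ is primitive, there is a non-separating oriented simple closed curve $\gamma_0$ with $[\gamma_0]=x$; the single-curve basic $1$-cycle $1\cdot\gamma_0$ is then a vertex $v_0=P_{\gamma_0}$ of~$\B_g$. The naive idea is to contract every simple $1$-cycle $c=\sum l_i\gamma_i$ to~$v_0$ along the straight segment $c_t=(1-t)c+t\gamma_0$ inside the cone~$\R_+^{\cur}$. This segment has homology class $(1-t)x+tx=x$ for all~$t$, so the only obstruction to staying in~$\B_g$ is the requirement that the support be realizable as a single oriented multicurve: $\gamma_0$ will in general intersect the components $\gamma_i$ of the support of~$c$. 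This intersection obstruction is the entire content of the theorem.

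First I would reduce to the disjoint case. I would fix a hyperbolic metric, put all curves in geodesic (hence minimal-position) representatives, and define a canonical surgery procedure: given a cycle~$c$ whose support meets~$\gamma_0$, resolve the intersections by oriented smoothings that respect orientations, so that the homology class is preserved and the result is again a positive simple $1$-cycle for~$x$. This is the standard cut-and-paste along~$\gamma_0$ used in contractibility proofs of curve- and arc-type complexes. Tracking the number of intersection points with~$\gamma_0$ gives a combinatorial complexity that strictly drops under each surgery, so after finitely many surgeries every cycle is carried to one whose support is disjoint from~$\gamma_0$. On the subcomplex of~$\B_g$ spanned by multicurves disjoint from~$\gamma_0$, the straight-line homotopy~$c_t$ does stay inside~$\B_g$ and deformation retracts this subcomplex onto~$v_0$; composing the two steps gives a retraction of all of~$\B_g$ onto~$v_0$.

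The hard part will be to make the surgery flow continuous and compatible with the cell structure, i.\,e., to organize the individual surgeries into a single homotopy consistent on overlapping polytopes $P_M\cap P_N$ and varying continuously with the barycentric coordinates~$l_i$ of a cycle inside each~$P_M$. Concretely, one must choose the order of resolving intersection points canonically (independently of which cell the cycle is viewed in), verify that each elementary surgery sends cells of~$\B_g$ into cells of~$\B_g$ (using that the faces of~$P_M$ are exactly the~$P_N$ with $N\subset M$), and check that the resolved cycles again satisfy the two defining conditions for membership in~$\M$, in particular that no non-trivial nonnegative combination of the homology classes of the new components vanishes. An alternative route that sidesteps the continuity bookkeeping is purely combinatorial: since $\B_g$ is a regular cell complex with face poset~$\M$, it is homotopy equivalent to the order complex of~$\M$, and one can instead build a discrete Morse function on~$\M$ (pairing each multicurve with the one obtained by a fixed canonical surgery against~$\gamma_0$) having the single critical cell~$v_0$, whence contractibility follows from discrete Morse theory. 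I expect the surgery route to be the one that generalizes, but the poset route is technically safer; either way, the crux is the canonical, globally consistent surgery against the fixed reference curve~$\gamma_0$.
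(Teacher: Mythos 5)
First, a point of calibration: this paper does not prove Theorem~\ref{theorem_BBM} at all. It is imported from Bestvina--Bux--Margalit~\cite{BBM07} (with the corrected definition of~$\M$, including condition~(2), discussed in the remark following the theorem and in~\cite{HaMa12}), so your proposal has to be judged against the proof in~\cite{BBM07}, which is a long argument, not against anything in the present text. Judged that way, your outline has a genuine gap at its central step, and the gap sits exactly where the content of the theorem lies. Points of~$\B_g$ are \emph{weighted} multicurves $c=\sum l_i\gamma_i$ with arbitrary positive real weights satisfying $\sum l_i[\gamma_i]=x$; the cells are polytopes of such weights. Oriented smoothing, by contrast, is an operation on unweighted unions of transverse curves, and what it preserves is the class of the total unweighted $1$-cycle being smoothed. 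Smoothing the crossings of $\operatorname{supp}(c)$ with~$\gamma_0$ therefore does not preserve~$x$: it produces something homologous to $\sum[\gamma_i]+[\gamma_0]$, since $\gamma_0$ is not part of~$c$ and cannot be ``mixed in'' without acquiring a weight. A cut-and-paste that does preserve the class of~$c$ must reroute along an arc of~$\gamma_0$ joining two intersection points of \emph{opposite} sign, and for the result to stay embedded that arc must contain no other point of $\operatorname{supp}(c)\cap\gamma_0$, i.e.\ the two points must be consecutive along~$\gamma_0$. In general the two members of every such pair lie on \emph{different} components $\gamma_i,\gamma_j$ carrying different weights $l_i\ne l_j$ (one can easily arrange configurations where the intersection points alternate between two components along~$\gamma_0$), and then no surgery of supports yields a weighted multicurve representing~$x$: one is forced to split weights --- surger ``$\min(l_i,l_j)$ worth'' and keep remainders --- i.e.\ to develop a train-track--style calculus of weighted resolutions, and then to prove that this calculus is canonical, lands in~$\M$ (your condition~(2) included), and is jointly continuous in the weights and across cells. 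Until that is done, your induction on the number of intersection points has nothing to induct on; this is not ``continuity bookkeeping'' but the actual theorem, and it is why the standard Hatcher flow for arc, sphere and curve complexes (where simplices are weight-free, or weights are unconstrained barycentric coordinates) does not transplant to~$\B_g$, whose weights are constrained by the homological equation $\sum l_i[\gamma_i]=x$.

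The proposed discrete Morse fallback fails for a structural reason: a discrete Morse matching on the face poset of a regular CW complex must pair each cell with a face or coface of codimension one, i.e.\ a multicurve $M\in\M$ with some $N\subset M$ or $N\supset M$. The multicurve obtained from~$M$ by ``surgery against~$\gamma_0$'' contains curves that are not components of~$M$, so it is neither a face nor a coface of~$M$; the proposed pairing is not a Morse matching, and acyclicity never even enters. What is correct in your proposal is the easy half: if $M\in\M$ is disjoint from~$\gamma_0$, then $M\cup\gamma_0\in\M$ (condition~(2) for~$M$ excludes a component anti-isotopic to~$\gamma_0$, and any vanishing nonnegative relation $a_0[\gamma_0]+\sum a_i[\gamma_i]=0$ with $a_0>0$ would, after adding $a_0$ times a basic $1$-cycle supported in~$M$, give a vanishing nonnegative relation among the classes of components of~$M$); hence the subcomplex of cycles with support disjoint from~$\gamma_0$ is a cone with apex~$v_0$, contracted by the straight-line homotopy. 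But reducing all of~$\B_g$ to that subcomplex \emph{is} the theorem, and a per-point ``finite sequence of surgeries'' does not assemble into a continuous homotopy $\B_g\times[0,1]\to\B_g$ without precisely the weighted, globally consistent resolution machinery that your outline defers to the end and never supplies.
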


\begin{remark}
In~\cite{BBM07} the authors were not careful enough about condition~(2) in the definition of~$\M$. Namely, they did not include explicitly this condition in the definition of the complex of cycles~$\B_g$ but later used it to prove that $\B_g$ is contractible. Possibly, they thought that condition~(2) is superfluous, since it follows from condition~(1). Nevertheless, the following example shows that this is not true. Let $a_1$, $a_2$, $a_3$ be three linearly independent homology classes that have trivial pairwise algebraic intersection numbers and generate a direct summand of $\HH_1(S_g)$. Suppose that  $x=a_1+a_2+2a_3$. Consider a $5$-component oriented multicurve~$M$ with components $\gamma_1$, $\gamma_2$, $\gamma_3$, $\gamma_4$, and~$\gamma_5$ whose homology classes are~$a_1$, $a_2$, $a_3$, $-a_1-a_2$, and~$a_1+a_2+a_3$, respectively. Then $\gamma_1+\gamma_2+2\gamma_3$ and $\gamma_4+2\gamma_5$ are basic $1$-cycles for~$x$. Therefore, $M$ satisfies condition~(1) and does not satisfy condition~(2). All results in~\cite{BBM07} become completely correct if one includes condition~(2) explicitly in the definition of the set~$\M$. Notice also that a thorough description of the complex of cycles including condition~(2) was given in~\cite{HaMa12}.
\end{remark}

The Torelli group~$\I_g$ acts cellularly and without rotations on the contractible complex~$\B_g$. `Without rotation' means that if an element $h\in\I_g$ stabilizes a cell~$P_M$, then it stabilizes every point of~$P_M$. The fact that the action of~$\I_g$ on~$\B_g$ is without rotations follows from a theorem by Ivanov~\cite[Theorem~1.2]{Iva92} saying that if an element $h\in\I_g$ stabilizes a multicurve~$M$, then it stabilizes every component of~$M$.

\section{Spectral sequence}\label{section_CL}

In this section we recall some standard facts on a spectral sequence associated with a cellular action of a group on a CW complex~$X$. In fact, there are two spectral sequences associated with such an action. In~\cite[Section~VII.7]{Bro82}, they are referred to as the \textit{first} and the \textit{second} spectral sequences. We are interested in the \textit{second} spectral sequence in terminology of~\cite{Bro82}. Usually, this spectral sequence is written for a cellular action of a group~$G$ on a CW complex~$X$. However, we need to consider the following more general purely algebraic situation.

Let $G$ be a group. Let $(C_*,\partial)$ be a chain complex of left $G$-modules satisfying the following conditions:
\begin{enumerate}
\item $C_k=0$ for $k<0$,
\item every $C_k$ is a free abelian group with a fixed basis~$\CE_k$, 
\item $G$ acts on~$C_k$ by permutations of elements of~$\CE_k$, i.\,e., $g\sigma\in\CE_k$ for all $g\in G$ and all $\sigma\in\CE_k$. 
\end{enumerate}

If $G$ acts cellularly and without rotations on a CW complex~$X$, then the cellular chain complex~$C_*(X)$ with the basis consisting of cells of~$X$ with appropriate orientations satisfies these condition. The orientations of cells should agree each other in every $G$-orbit, which is possible, since the action is without rotations.

Recall that the tensor product $A\otimes_GB$ of two left $G$-modules~$A$ and~$B$ is the quotient of the abelian group~$A\otimes B$ by all relations of the form $(ga)\otimes (gb)=a\otimes b$, where $g\in G$.

Let $(R_*,d)$ be a projective resolution of~$\Z$ over the group ring~$\Z G$, where $\Z$ is endowed with the structure of left $\Z G$-module so that $g\cdot 1=1$ for all $g\in G$. Consider the double complex
$$
B_{p,q}=C_p\otimes_GR_q,
$$
with anti-commuting differentials~$\partial'$ and~$\partial''$ of bi-degrees~$(-1,0)$ and~$(0,-1)$, respectively, given by
$$
\partial'(z\otimes\kappa)=(\partial z)\otimes\kappa,\qquad
\partial''(z\otimes\kappa)=(-1)^{\deg z}z\otimes(d\kappa).
$$ 

The spectral sequence we are interested in is the \textit{second} spectral sequence~$E^*_{*,*}$ of the double complex~$B_{*,*}$ in terminology of~\cite[Section~VII.3]{Bro82}. This is a first-quadrant spectral sequence with differentials~$d^r$ of bi-degrees $(-r,r-1)$, respectively, such that  $E^0_{p,q}=B_{p,q}$, $d^0=\partial''$, and $d^1$ is induced by~$\partial'$. We will need the following standard facts on this spectral sequence.

\begin{fact}\label{fact_E1}
Let $\Sigma_p\subseteq \CE_p$ be a subset containing exactly one representative in each $G$-orbit. Then there is a canonical isomorphism 
\begin{equation}\label{eq_E1pq}
E^1_{p,q}\cong\bigoplus_{\sigma\in \Sigma_p}\HH_q(\Stab_{G}(\sigma)).
\end{equation}
In particular, starting from the $E^1$-page, the spectral sequence is independent of the choice of a projective resolution~$R_*$.
Besides, if we choose another set of representatives of $G$-orbits~$\Sigma'_p=\{g_{\sigma}\sigma\}_{\sigma\in\Sigma_p}$, where $g_{\sigma}\in G$, then the two corresponding isomorphisms~\eqref{eq_E1pq} will differ by the direct sum of the isomorphisms $\HH_q(\Stab_{G}(\sigma))\to \HH_q(\Stab_{G}(g_{\sigma}\sigma))$ induced by the conjugations by~$g_{\sigma}$.
 \end{fact}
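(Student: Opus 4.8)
The plan is to reduce the computation of $E^1_{p,q}=H_q(B_{p,*},\partial'')$ to the standard description of group homology via a projective resolution, by decomposing the permutation module $C_p$ into its $G$-orbit summands. First I would note that condition~(3) guarantees that $C_p$, as a $\Z G$-module, is a genuine permutation module with no signs: it splits as $C_p=\bigoplus_{\sigma\in\Sigma_p}\Z[G\cdot\sigma]$, and the map $g\sigma\mapsto gG_\sigma$ identifies each orbit summand $\Z[G\cdot\sigma]$ with the permutation module $\Z[G/G_\sigma]$, where $G_\sigma=\Stab_G(\sigma)$. Since the functor $-\otimes_G R_q$ preserves direct sums, this yields a decomposition $B_{p,q}=\bigoplus_{\sigma\in\Sigma_p}\bigl(\Z[G/G_\sigma]\otimes_G R_q\bigr)$ that is compatible with the differential $\partial''$, so it suffices to identify the homology of each summand column.

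The key step is a Shapiro-type isomorphism. For any left $\Z G$-module $N$ there is a natural isomorphism $\Z[G/G_\sigma]\otimes_G N\cong N_{G_\sigma}$, the $G_\sigma$-coinvariants: using the defining relations of $\otimes_G$, every element is equivalent to one of the form $(eG_\sigma)\otimes m$, and the relation forces $(eG_\sigma)\otimes m=(eG_\sigma)\otimes(h^{-1}m)$ for $h\in G_\sigma$, which is exactly passage to coinvariants. Applying this with $N=R_q$ term by term gives an isomorphism of chain complexes $\Z[G/G_\sigma]\otimes_G R_*\cong (R_*)_{G_\sigma}$; the global sign $(-1)^{\deg z}=(-1)^p$ appearing in $\partial''$ is constant along the column $B_{p,*}$ and hence does not affect homology. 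Now $\Z G$ is free as a module over $\Z G_\sigma$ (on any set of coset representatives), so the restriction of each projective $\Z G$-module $R_q$ is projective over $\Z G_\sigma$; since restriction preserves exactness, $R_*|_{G_\sigma}$ is a projective resolution of $\Z$ over $\Z G_\sigma$. Therefore $H_q\bigl((R_*)_{G_\sigma}\bigr)=H_q(G_\sigma)$ by definition of group homology, and summing over $\sigma$ yields the isomorphism~\eqref{eq_E1pq}.

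For the independence statement I would invoke that any two projective resolutions of $\Z$ over $\Z G$ are chain homotopy equivalent over $\Z G$ (lifting $\id_\Z$), that such an equivalence induces a morphism of the double complexes $B_{*,*}$ and hence of the associated spectral sequences, and that this morphism is an isomorphism on $E^1$ by the computation above; by the comparison theorem it is then an isomorphism on every later page. For the change-of-representatives formula, I would track the basepoint through the Shapiro isomorphism: replacing $\sigma$ by $g_\sigma\sigma$ replaces $G_\sigma$ by $g_\sigma G_\sigma g_\sigma^{-1}$ and the distinguished coset $eG_\sigma$ by $e\,(g_\sigma G_\sigma g_\sigma^{-1})$, and comparing the two identifications of the single orbit summand $\Z[G\cdot\sigma]$ shows that the two resulting isomorphisms on $H_q$ differ precisely by the conjugation isomorphism $H_q(G_\sigma)\to H_q(g_\sigma G_\sigma g_\sigma^{-1})$ induced by $g\mapsto g_\sigma g g_\sigma^{-1}$.

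I expect the only genuinely delicate points to be bookkeeping rather than conceptual: verifying that the constant sign $(-1)^p$ is harmless, and carrying out the basepoint-tracking that produces the conjugation isomorphism in the change-of-representatives clause with the correct variance. Everything else is the standard derivation of the $E^1$ page of an equivariant homology spectral sequence, as in~\cite[Ch.~VII]{Bro82}; the decomposition into orbit modules together with Shapiro's lemma does all the work.
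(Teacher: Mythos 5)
Your proposal is correct and coincides with the paper's own proof: the paper establishes this Fact by citing \cite[Section~VII.7]{Bro82} (isomorphism~(7.7) there) and observing that the argument carries over verbatim from $C_*(X)$ to an arbitrary permutation chain complex, and Brown's argument is exactly yours — decomposition of $C_p$ into orbit summands $\Z[G/G_\sigma]$, the coinvariants/Shapiro identification $\Z[G/G_\sigma]\otimes_G R_*\cong (R_*)_{G_\sigma}$ with $R_*$ restricting to a projective $\Z G_\sigma$-resolution, plus the same basepoint-tracking that yields the conjugation isomorphisms under a change of representatives. No gaps to report.
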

 
We denote the canonical injection $\HH_q(\Stab_{G}(\sigma))\hookrightarrow E^1_{p,q}$ by~$\iota_{\sigma}$.
 
\begin{remark}\label{remark_sign}
If $C_*=C_*(X)$, then the basis consisting of the cells of~$X$ is canonically defined only up to signs. More precisely, we may reverse simultaneously the orientations of all cells in any $G$-orbit. It is easy to see that the injection~$\iota_{\sigma}$ reverses its sign whenever we reverse the orientation of~$\sigma$.
\end{remark}

 \begin{fact}\label{fact_Einfty} $\bigoplus_{p+q=s}E^{\infty}_{p,q}$ is the graded group associated with certain filtration
$$
0=\CF_{-1,s+1}\subseteq \CF_{0,\,s}\subseteq \CF_{1,\,s-1}\subseteq \cdots \subseteq \CF_{s,\,0}=\HH_s^G(C_*)
$$
so that
$$E^{\infty}_{p,q}=\CF_{p,q}/\CF_{p-1,\,q+1}.$$
Here~$\HH_s^G(C_*)$ is the $s$\textsuperscript{th} $G$-equivariant homology group of~$C_*$, i.\,e., the $s$\textsuperscript{th}  homology group of the total complex of the double complex~$B_{*,*}$.  Moreover, $\CF_{p,q}$ is exactly the image of the natural homomorphism
$$
\HH_{p+q}^G(C_*^{\le p})\to \HH_{p+q}^G(C_*),
$$
where $C_i^{\le p}=C_i$ for $i\le p$ and $C_i^{\le p}=0$ for $i>p$. Recall that if $C_*=C_*(X)$ and $X$ is contractible, then~$\HH_s^G(C_*)$ is naturally isomorphic to~$\HH_s(G)$.
 \end{fact}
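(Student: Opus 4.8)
The plan is to derive all three assertions from the standard theory of the spectral sequence of a first-quadrant double complex (see~\cite[Section~VII.3]{Bro82}), applied to $B_{*,*}$ filtered by columns. First I would form the total complex $T_*$, with $T_s=\bigoplus_{p+q=s}B_{p,q}$ and differential $\partial'+\partial''$, and equip it with the increasing filtration $F_pT_s=\bigoplus_{i\le p}B_{i,s-i}$; by definition the spectral sequence associated with this filtration is exactly $E^*_{*,*}$. Since $B_{p,q}=0$ unless $p\ge 0$ and $q\ge 0$, the filtration $F_\bullet T_s$ is finite in each total degree $s$. Hence the spectral sequence converges to $H_*(T_*)=H^G_*(C_*)$, and the general convergence statement gives $E^\infty_{p,q}=\CF_{p,q}/\CF_{p-1,q+1}$, where $\CF_{p,q}$ is the image of the map $H_{p+q}(F_pT_*)\to H_{p+q}(T_*)$ induced by the inclusion. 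This already produces the filtration $0\subseteq\CF_{0,s}\subseteq\cdots\subseteq\CF_{s,0}=H^G_s(C_*)$ whose associated graded is $\bigoplus_{p+q=s}E^\infty_{p,q}$.

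Next I would identify the filtration steps. By construction $F_pT_*$ is the total complex of the truncated double complex obtained from $B_{*,*}$ by discarding all columns of index $>p$; but this truncated double complex is precisely $C_*^{\le p}\otimes_GR_*$, the double complex built from the truncated chain complex $C_*^{\le p}$ in place of $C_*$. Consequently $H_{p+q}(F_pT_*)=H^G_{p+q}(C_*^{\le p})$, and the inclusion $F_pT_*\hookrightarrow T_*$ is induced by the inclusion of chain complexes $C_*^{\le p}\hookrightarrow C_*$. Therefore $\CF_{p,q}$ is exactly the image of the natural homomorphism $H^G_{p+q}(C_*^{\le p})\to H^G_{p+q}(C_*)$, as asserted.

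For the final clause, that $H^G_s(C_*)\cong H_s(G)$ when $C_*=C_*(X)$ with $X$ contractible, I would instead run the transposed spectral sequence of $B_{*,*}$, obtained by filtering by rows and first taking homology with respect to $\partial'$. Each $R_q$ is a free, hence flat, $\Z G$-module, so the functor $-\otimes_GR_q$ is exact and commutes with homology, giving $H_p(C_*\otimes_GR_q)\cong H_p(C_*)\otimes_GR_q$. Contractibility of $X$ forces $H_p(C_*)$ to be $\Z$ for $p=0$ and $0$ otherwise, so the first page of the transposed sequence is concentrated in the single column $p=0$, where it equals $\Z\otimes_GR_q$. The induced differential there is the one coming from $d$ on $R_*$, so the next page computes the homology of the complex $\Z\otimes_GR_*$, which is $H_*(G)$ by the very definition of $R_*$ as a projective resolution of the trivial module~$\Z$. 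Being concentrated in one column, this spectral sequence collapses, and since both spectral sequences compute $H_*(T_*)$, we conclude $H^G_s(C_*)\cong H_s(G)$.

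Since every assertion is a direct application of the two spectral sequences of a double complex, there is no serious obstacle here; the only points demanding care are the identification $F_pT_*=\mathrm{Tot}(C_*^{\le p}\otimes_GR_*)$, which must be compatible with the sign $(-1)^{\deg z}$ in $\partial''$, and the appeal to flatness of the free modules $R_q$ in order to pass homology through $-\otimes_GR_q$ in the transposed sequence. Both become routine once the bigrading and differentials are fixed.
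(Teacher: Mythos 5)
Your proposal is correct and is essentially the paper's own argument: the paper simply refers to Brown~\cite[Section~VII.7]{Bro82} for the standard theory of the two spectral sequences of the first-quadrant double complex $B_{*,*}$ (column filtration for the convergence and the identification $\CF_{p,q}=\mathrm{im}\bigl(H^G_{p+q}(C_*^{\le p})\to H^G_{p+q}(C_*)\bigr)$, row filtration plus flatness of the projective modules $R_q$ for the collapse giving $H^G_s(C_*)\cong H_s(G)$ when $X$ is contractible), noting that the argument for an abstract $C_*$ satisfying (1)--(3) is literally the same. The only cosmetic point is that $R_*$ is assumed projective rather than free, but projective modules are flat, so your transposed-sequence step goes through unchanged.
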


In the `topological case' $C_*=C_*(X)$, the proofs of Facts~\ref{fact_E1} and~\ref{fact_Einfty} can be found in~\cite[Section~VII.7]{Bro82}, and~\eqref{eq_E1pq} is exactly isomorphism~(7.7) in~\cite{Bro82}. The proofs for an arbitrary chain complex~$C_*$ are literally the same.
Obviously, the construction of the spectral sequence we consider is functorial in the following sense.

\begin{fact}\label{fact_natural}
For $i=1,2$, let $C_*^{(i)}$ be a chain complex of left $G_i$-modules satisfying \mbox{\textnormal{(1)--(3)}} and let $E^*_{*,*}(i)$  be the corresponding spectral sequence. Suppose that $\chi\colon G_1\to G_2$ is a homomorphism and $\varphi\colon C_*^{(1)}\to C_*^{(2)}$ is a $\chi$-equivariant chain map. Then the pair~$(\varphi,\chi)$ induces a morphism of spectral sequences $E^*_{*,*}(1)\to E^*_{*,*}(2)$ that in the $E^{\infty}$-page is associated with the natural homomorphism $\HH_*^{G_1}\bigl(C_*^{(1)}\bigr)\to \HH_*^{G_2}\bigl(C_*^{(2)}\bigr)$.
\end{fact}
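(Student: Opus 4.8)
The statement to prove is Fact~\ref{fact_natural}, the functoriality of the Cartan--Leray spectral sequence with respect to a homomorphism $\chi\colon G_1\to G_2$ and a $\chi$-equivariant chain map $\varphi\colon C_*^{(1)}\to C_*^{(2)}$. The plan is to reduce everything to the level of the defining double complexes and invoke the standard fact that a morphism of double complexes induces a morphism of the associated spectral sequences compatible with the induced map on total homology. The only genuine work is to construct a morphism of double complexes $B_{*,*}^{(1)}\to B_{*,*}^{(2)}$ out of the given data.

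First I would fix projective resolutions $R_*^{(1)}$ and $R_*^{(2)}$ of $\Z$ over $\Z G_1$ and $\Z G_2$, respectively. Regarding $R_*^{(2)}$ as a complex of $\Z G_1$-modules via $\chi$ (that is, $g_1$ acts through $\chi(g_1)$), it becomes a complex of projective---hence, in particular, acyclic in the relevant sense over $\Z G_1$ up to the usual comparison---modules, and by the fundamental lemma of homological algebra the identity on $\Z$ lifts to a $\Z G_1$-chain map $\rho\colon R_*^{(1)}\to R_*^{(2)}$ covering $\id_\Z$, unique up to chain homotopy. Here I use that $R_*^{(1)}$ is projective and $R_*^{(2)}$ (via $\chi$) is a resolution of $\Z$. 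Combining $\varphi$ on the chain side with $\rho$ on the resolution side, and using that both maps are appropriately equivariant, I would define
\begin{equation*}
\Phi_{p,q}\colon C_p^{(1)}\otimes_{G_1}R_q^{(1)}\longrightarrow C_p^{(2)}\otimes_{G_2}R_q^{(2)},\qquad
\Phi_{p,q}(z\otimes\kappa)=\varphi(z)\otimes\rho(\kappa).
\end{equation*}
The $\chi$-equivariance of $\varphi$ together with the $\Z G_1$-linearity of $\rho$ is exactly what is needed to see that $\Phi_{p,q}$ is well defined on the tensor products over the respective group rings: the relation $(g_1 z)\otimes(g_1\kappa)=z\otimes\kappa$ on the source is sent to $\varphi(g_1 z)\otimes\rho(g_1\kappa)=(\chi(g_1)\varphi(z))\otimes(\chi(g_1)\rho(\kappa))$, which equals $\varphi(z)\otimes\rho(\kappa)$ modulo the relations on the target. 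One then checks that $\Phi$ commutes with both differentials $\partial'$ and $\partial''$: compatibility with $\partial'$ follows because $\varphi$ is a chain map, and compatibility with $\partial''$ follows because $\rho$ is a chain map (the sign $(-1)^{\deg z}$ is preserved since $\Phi$ does not change the $C$-degree).

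Thus $\Phi$ is a morphism of double complexes, and by the general theory of spectral sequences of double complexes (see~\cite[Section~VII.3]{Bro82}) it induces a morphism of the associated spectral sequences $E^*_{*,*}(1)\to E^*_{*,*}(2)$, functorially in $\Phi$. On the abutment, a morphism of double complexes induces a map on the total complexes, hence on total homology $H_*^{G_1}(C_*^{(1)})\to H_*^{G_2}(C_*^{(2)})$, which by construction is the natural map $z\otimes\kappa\mapsto\varphi(z)\otimes\rho(\kappa)$ on equivariant homology; and the induced map on the $E^\infty$-page is, by the standard compatibility of the spectral sequence with the filtration on total homology, the associated graded of this map. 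I expect the main (though still routine) obstacle to be the bookkeeping in verifying that $\Phi_{p,q}$ descends to the coinvariant tensor products and commutes with $\partial''$ on the nose; once the $\chi$-equivariance of $\varphi$ and the $\Z G_1$-linearity of $\rho$ are properly invoked, the rest is formal. I would also remark that independence of the choice of $\rho$ up to the induced map follows from the chain-homotopy uniqueness of $\rho$, so the resulting morphism of spectral sequences from page $E^1$ onward depends only on the pair $(\varphi,\chi)$, consistent with the analogous statement in Fact~\ref{fact_E1}.
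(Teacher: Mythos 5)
Your proof is correct and is precisely the standard functoriality argument that the paper leaves implicit: the paper offers no proof of this Fact, asserting it as obvious and relying on the general theory of spectral sequences of double complexes from Brown, which is exactly what you spell out (comparison theorem to get $\rho$, the map $z\otimes\kappa\mapsto\varphi(z)\otimes\rho(\kappa)$ on the double complexes, and homotopy-uniqueness of $\rho$ to get independence from page~$E^1$ on). One caveat: your parenthetical claim that $R_*^{(2)}$ restricted along $\chi$ becomes a complex of \emph{projective} $\Z G_1$-modules is false in general (restriction of scalars does not preserve projectivity --- e.g.\ $\Z$ restricted along $\Z\to 1$ is not projective over $\Z[\Z]$), but this does not damage the argument, since, as you correctly state immediately afterwards, the comparison theorem only requires that $R_*^{(1)}$ consist of projectives and that the restricted $R_*^{(2)}$ be a resolution of~$\Z$, which is a property of the underlying complex and holds automatically.
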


Recall that if $A$ is a left $H$-module and $H$ is a subgroup of~$G$, then  the left $G$-module
$$
\Ind_H^GA=\Z G\otimes_HA
$$
is called the \textit{induced module}. Here~$\otimes_H$ denotes the tensor product over~$\Z H$ of~$\Z G$ regarded as a right $H$-module with the left $H$-module~$A$. The structure of the left $G$-module on~$\Ind_H^GA$ is induced by the structure of the left $G$-module on~$\Z G$, see~\cite[Section~III.5]{Bro82}.

\begin{fact}\label{fact_ind}
Suppose that $C_*$ is a chain complex of left $H$-modules satisfying~\mbox{\textnormal{(1)--(3)}} and $H$ is a subgroup of~$G$. Let $E^*_{*,*}(1)$ be the spectral sequence
for the action of~$H$ on~$C_*$ and let $E^*_{*,*}(2)$ be the spectral sequence
for the action of~$G$ on the chain complex $D_*=\Ind_H^GC_*$. Then the spectral sequences~$E^*_{*,*}(1)$ and~$E^*_{*,*}(2)$ are naturally isomorphic to each other starting from the $E^1$-page.
\end{fact}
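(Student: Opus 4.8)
The plan is to route everything through the functoriality statement of Fact~\ref{fact_natural}, reducing the claim to an orbit-and-stabilizer bookkeeping on page~$E^1$. First I would verify that $D_*=\Ind_H^GC_*$ again satisfies conditions~(1)--(3), now for the $G$-action. Fix a set $\{g_{\alpha}\}$ of representatives of the left cosets $gH$, with $e$ chosen as the representative of $H$ itself, so that $\Z G=\bigoplus_{\alpha}g_{\alpha}\Z H$ as a right $\Z H$-module. Then $D_p=\Z G\otimes_HC_p$ is free abelian with basis $\{g_{\alpha}\otimes\sigma\}_{\alpha,\,\sigma\in\CE_p}$. For $g\in G$ I write $gg_{\alpha}=g_{\beta}h$ with $h\in H$ and obtain $g\cdot(g_{\alpha}\otimes\sigma)=g_{\beta}\otimes(h\sigma)$, which is again a basis element because $h\sigma\in\CE_p$. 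Hence $G$ permutes this basis, and $D_*$ satisfies~(1)--(3).

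Next I would set up the morphism. Let $\chi\colon H\hookrightarrow G$ be the inclusion and let $\varphi\colon C_*\to D_*$ be the map $z\mapsto e\otimes z$. Since the differential on $D_*$ is $\id_{\Z G}\otimes\partial$, the map $\varphi$ is a chain map; and for $h\in H$ one has $\varphi(hz)=e\otimes hz=h\otimes z=\chi(h)\cdot\varphi(z)$, so $\varphi$ is $\chi$-equivariant. By Fact~\ref{fact_natural}, the pair $(\varphi,\chi)$ induces a morphism of spectral sequences $E^*_{*,*}(1)\to E^*_{*,*}(2)$. It then suffices to check that this morphism is an isomorphism already on page~$E^1$, because a morphism of spectral sequences that is an isomorphism on some page is automatically an isomorphism on every later page (each subsequent page being the homology of the previous one with respect to the differential, so the map induced on homology stays an isomorphism).

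To identify the $E^1$-map I would analyse $G$-orbits and stabilizers of the basis $\{g_{\alpha}\otimes\sigma\}$. Every such element equals $g_{\alpha}\cdot(e\otimes\sigma)$, so each $G$-orbit meets the subset $\{e\otimes\sigma:\sigma\in\CE_p\}$. Unwinding the tensor relation, $g\cdot(e\otimes\sigma)=e\otimes\sigma'$ means $g_{\beta}\otimes(h\sigma)=e\otimes\sigma'$ after writing $g=g_{\beta}h$; comparison of basis elements forces $g_{\beta}=e$, i.e.\ $g=h\in H$, and $\sigma'=h\sigma$. Consequently $\sigma\mapsto e\otimes\sigma$ induces a bijection between $H$-orbits in $\CE_p$ and $G$-orbits in the basis of $D_p$, and, crucially, $\Stab_G(e\otimes\sigma)=\Stab_H(\sigma)$ as subgroups of~$G$. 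Choosing a set $\Sigma_p$ of representatives of the $H$-orbits in $\CE_p$ for the first spectral sequence and the corresponding set $\{e\otimes\sigma:\sigma\in\Sigma_p\}$ for the second, the canonical isomorphisms of Fact~\ref{fact_E1} identify both $E^1_{p,q}$ with $\bigoplus_{\sigma\in\Sigma_p}H_q(\Stab_H(\sigma))$, and under these identifications the morphism sends $\iota_{\sigma}$ to $\iota_{e\otimes\sigma}$, its component being the map on $H_q$ induced by the inclusion of stabilizers. Since that inclusion is an \emph{equality}, each component is the identity, so the $E^1$-map is an isomorphism, which completes the argument.

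The only genuinely delicate point is the equality $\Stab_G(e\otimes\sigma)=\Stab_H(\sigma)$ combined with the compatibility of the orbit bijection with the maps $\iota_{\sigma}$: it is the equality of stabilizers, rather than a mere abstract isomorphism, that makes the $E^1$-level map a direct sum of identity maps and thereby forces the isomorphism of spectral sequences from page~$E^1$ onward. This is the spectral-sequence refinement of Shapiro's lemma; a more hands-on alternative would be to build a direct isomorphism of the two double complexes after fixing a $\chi$-equivariant chain map between the projective resolutions, but routing everything through Fact~\ref{fact_natural} avoids carrying the resolutions explicitly.
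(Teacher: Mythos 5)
Your proof is correct and follows essentially the same route as the paper: both induce the morphism $E^*_{*,*}(1)\to E^*_{*,*}(2)$ from the pair $\bigl(z\mapsto 1\otimes z,\, H\hookrightarrow G\bigr)$ via Fact~\ref{fact_natural} and then check it is an isomorphism on page~$E^1$ by matching $H$-orbits of~$\CE_p$ with $G$-orbits of the basis of~$D_p$ and using the equality $\Stab_H(\sigma)=\Stab_G(1\otimes\sigma)$. The only difference is that you spell out details the paper leaves implicit (that $D_*$ again satisfies conditions (1)--(3), and the coset bookkeeping behind the orbit bijection and stabilizer equality), which is a faithful filling-in rather than a different argument.
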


\begin{proof}
By Fact~\ref{fact_natural}, the injection $H\subset G$ and the injection $\varphi\colon C_*\to D_*$ given by~$\xi\mapsto 1\otimes\xi$ induce a well-defined morphism of spectral sequences $\Phi\colon E^*_{*,*}(1)\to E^*_{*,*}(2)$.  Suppose that  $\Sigma_p$ is a set of representatives of $H$-orbits of basis elements in~$C_p$. Then the elements $1\otimes\sigma$, where $\sigma\in\Sigma_p$, constitute the set of representatives of $G$-orbits of basis elements in~$D_p$. The morphism~$\Phi$ in the $E^1$-page coincides with the isomorphism
$$
E^1_{p,q}(1)\cong \bigoplus_{\sigma\in \Sigma_p}\HH_q(\Stab_{H}(\sigma))=
\bigoplus_{\sigma\in \Sigma_p}\HH_q(\Stab_{G}(1\otimes\sigma))\cong E^1_{p,q}(2).
$$
Therefore $\Phi$ is an isomorphism starting from the $E^1$-page.
\end{proof}

\section{The spectral sequence and generalized abelian cycles}\label{section_lemma}
Suppose that $X$ is a contractible regular cell complex and $G$ is a group acting on~$X$ cellularly and without rotations. Let $E^*_{*,*}$ be the  corresponding spectral sequence from Section~\ref{section_CL} and let~$\CF_{*,*}$ be the corresponding filtration in~$\HH_*(G)$.

Consider the cube $[0,1]^n$ with  standard coordinates $t_1,\ldots,t_n$. Let $F_i$ be the facet of~$[0,1]^n$ given by~$t_i=0$, and let $F_i'$ be the facet of~$[0,1]^n$ given by~$t_i=1$. We denote by~$\tau_i$ the standard homeomorphism $F_i\to F_i'$ given by the parallel translation.

\begin{lem}\label{propos_spectral}
Let $f_1,\dots, f_n$ be pairwise commuting elements of~$G$ and let $P$ be an $n$-cell of~$X$ satisfying the following conditions:
\begin{itemize}
\item there is a homeomorphism $\varphi\colon [0,1]^n\to P$ that maps every face of~$[0,1]^n$ homeomorphically onto a subcell of~$P$,
\item for every $i=1,\ldots,n$, the action by~$f_i$ maps the cell~$\varphi(F_i)$ onto the cell~$\varphi(F'_i)$ via the homeomorphism $\varphi\circ\tau_i\circ\varphi^{-1}$,
\item $f_1,\dots,f_n$ lie in the centralizer of the stabilizer~$\Stab_{G}(P)$.
\end{itemize}
Suppose that $u\in \HH_q(\Stab_{G}(P))$. Then the homology class~$\CA(f_1,\ldots,f_n;u)$ lies in the subgroup $\CF_{n,q}\subseteq \HH_{n+q}(G)$ and the image of this class under the projection
$$
\CF_{n,q}\to\CF_{n,q}/\CF_{n-1,\,q+1}=E^{\infty}_{n,q}
$$
is represented by the class $\iota_{P}(u)\in E^{1}_{n,q}$.
\end{lem}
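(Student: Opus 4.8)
The plan is to reduce the statement to an explicit computation in the double complex $B_{*,*}$ by constructing a convenient chain map that realizes the abelian cycle $\CA(f_1,\ldots,f_n;u)$ geometrically on the cell $P$. First I would recall that $\CA(f_1,\ldots,f_n;u)=\chi_*(\mu_n\times u)$, where $\chi\colon\Z^n\times\Stab_G(P)\to G$ sends the $i$th generator of $\Z^n$ to $f_i$ and is the inclusion on $\Stab_G(P)$. The key idea is to model the fundamental class $\mu_n$ of the $n$-torus $T^n=\R^n/\Z^n$ by the image of the standard cube $[0,1]^n$ under the quotient map, and then to transport this model to $X$ using $\varphi$. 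Concretely, the hypotheses say that $\varphi\colon[0,1]^n\to P$, together with the identifications of opposite facets $F_i\to F_i'$ via $f_i$ and $\tau_i$, descends to a continuous map $\bar\varphi\colon T^n\to X/G$ whose lift encodes exactly how the $f_i$ glue the boundary of $P$. I would build a $\chi$-equivariant chain map from the cellular chains of the universal cover of $T^n$ (with its natural $\Z^n$-action by deck transformations) into $C_*(X)$, sending the fundamental cube to $P$.

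Next I would invoke the functoriality of the Cartan--Leray spectral sequence (Fact \ref{fact_natural}). The action of $\Z^n\times\Stab_G(P)$ on the product complex $C_*(\widetilde{T^n})\otimes(\text{a free resolution over }\Stab_G(P))$ — equivalently, the action of $\Z^n\times\Stab_G(P)$ on $C_*(\R^n)$ where $\R^n$ is the universal cover of $T^n$ and $\Stab_G(P)$ acts trivially — has a Cartan--Leray spectral sequence $\widetilde E^*_{*,*}$ converging to $H_*(\Z^n\times\Stab_G(P))$. The chain map constructed above is equivariant with respect to $\chi$, so by Fact \ref{fact_natural} it induces a morphism $\widetilde E^*_{*,*}\to E^*_{*,*}$ which on the $E^\infty$ page is adjoint to $\chi_*\colon H_*(\Z^n\times\Stab_G(P))\to H_*(G)$. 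On the source side, the contractibility of $\R^n$ together with the fact that $\R^n$ has a single $\Z^n$-orbit of top cells makes the spectral sequence $\widetilde E^*_{*,*}$ essentially trivial to analyze: the top cell $[0,1]^n$ contributes $\widetilde E^1_{n,q}\cong H_q(\Stab_G(P))$ via $\iota_{[0,1]^n}$, and the class $\mu_n\times u$ lives in filtration level exactly $n$, with its image in $\widetilde E^\infty_{n,q}$ given precisely by $\iota_{[0,1]^n}(u)$.

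Pushing this through the morphism of spectral sequences then gives the statement: $\CA(f_1,\ldots,f_n;u)=\chi_*(\mu_n\times u)$ lands in $\CF_{n,q}$ and its image in $E^\infty_{n,q}$ is $\iota_P(u)$. The main obstacle, and where I would spend the most care, is the first step: verifying that the geometric data in the hypotheses (the homeomorphism $\varphi$, the facet identifications via $f_i\circ\tau_i$, and the centrality condition $[f_i,\Stab_G(P)]=1$) genuinely assemble into a $\chi$-equivariant chain map defined on \emph{all} of $C_*(\R^n)$, not merely on the top cell. The centrality hypothesis is exactly what is needed here: it guarantees that translating $P$ around by words in the $f_i$ is compatible with the $\Stab_G(P)$-action on the orbit, so that the assignment of lower-dimensional faces of the translated cubes to cells of $X$ is well defined and commutes with the group actions. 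One must also track orientations carefully, since by Remark \ref{remark_sign} the injection $\iota_P$ depends on the chosen orientation of $P$; I would fix the orientation of $[0,1]^n$ coming from the standard coordinates $t_1,\ldots,t_n$ and check that $\varphi$ carries it to the orientation of $P$ used to define $\iota_P$, so that no spurious sign appears in the final identification.
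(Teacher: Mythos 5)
Your proposal is correct and follows essentially the same route as the paper's proof: both pass to the cubical decomposition $Y$ of $\R^n$ acted on by $\Z^n\times\Stab_G(P)$ (with the second factor acting trivially), extend $\varphi$ to a $\chi$-equivariant map $Y\to X$, and use Fact~\ref{fact_natural} to push the class $\mu_n\times u$, whose filtration level and $E^{\infty}$-image are computed in the auxiliary spectral sequence, forward to $E^{\infty}_{n,q}$. The only difference is one of detail: where you assert that the source spectral sequence is ``essentially trivial to analyze,'' the paper carries this out via the K\"unneth theorem, identifying $\uE^{\infty}_{n,q}\cong H_n(\Z^n)\otimes H_q(H)$ with the image of the inclusion $\uE^{\infty}_{n,q}\subseteq\uE^1_{n,q}$, exactly as your plan requires.
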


\begin{proof}
We put $H=\Stab_{G}(P)$. 

Let $Y$ be the standard decomposition of~$\R^n$ into unit cubes with vertices at points with integral coordinates. In particular, the cube $Q=[0,1]^n$ is a cell of~$Y$. The group~$\Z^n$ acts on~$Y$ by translations. This action is cellular and free. Let $C_*(Y/\Z^n)$ be the cellular chain complex of the cell complex~$Y/\Z^n$, which is the standard cell decomposition of the $n$-dimensional torus $\R^n/\Z^n$. It is easy to see that the differential of~$C_*(Y/\Z^n)$ is trivial, hence, $C_*(Y/\Z^n)=\HH_*(Y/\Z^n)=\HH_*(\Z^n)$.

Let $\uE^*_{*,*}$ be the spectral sequence from Section~\ref{section_CL} for the action of~$\Z^n\times H$ on~$Y$, where the factor~$H$ acts  trivially, and let $\uF_{*,*}$ be the corresponding filtration in~$\HH_*(\Z^n\times H)$. Obviously, $\uE^r_{p,q}=0$ off the strip $0\le p\le n$. Since the stabilizer of every face of~$Y$ is exactly~$H$, isomorphism~\eqref{eq_E1pq} takes the form
\begin{equation}\label{eq_iso1}
\uE^1_{n,q}\cong C_n(Y/\Z^n)\otimes \HH_q(H)=\HH_n(\Z^n)\otimes \HH_q(H).
\end{equation}
It follows from Fact~\ref{fact_Einfty} and the K\"unneth theorem that
\begin{gather*}
\uF_{n,q}=\HH_{n+q}(\Z^n\times H)=\bigoplus_{i+j=n+q}\HH_i(\Z^n)\otimes \HH_j(H),\\
\uF_{n-1,q+1}=\bigoplus_{i+j=n+q,\,i<n}\HH_i(\Z^n)\otimes \HH_j(H).
\end{gather*}
Hence we obtain a natural isomorphism
\begin{equation}\label{eq_isoinfty}
\uE^{\infty}_{n,q}\cong \HH_n(\Z^n)\otimes \HH_q(H).
\end{equation}
It can be easily checked that the isomorphism~\eqref{eq_isoinfty} is the composition of the natural inclusion $\uE^{\infty}_{n,q}\subseteq \uE^1_{n,q}$ and the isomorphism~\eqref{eq_iso1}. In particular this implies that $\uE^{\infty}_{n,q}=\uE^1_{n,q}$. (In fact, it is not hard to prove that the spectral sequence~$\uE^*_{*,*}$ stabilizes at the $E^1$-page.) Since the projection $\HH_{n+q}(\Z^n\times H)\to \HH_n(\Z^n)\otimes \HH_q(H)$ takes $\mu_n\times u$ to $\mu_n\otimes u$ and isomorphism~\eqref{eq_iso1} takes $\iota_Q(u)$ to $\mu_n\otimes u$, we obtain that the projection  $\uF_{n,q}\to \uE^{\infty}_{n,q}=\uE^1_{n,q}$ takes $\mu_n\times u$ to~$\iota_Q(u)$.

Now, let $\chi\colon\Z^n\times H\to G$ be the homomorphism that takes the elements of the standard basis of~$\Z^n$ to $f_1,\ldots,f_n$, respectively, and is the identity on~$H$.
Then the homeomorphism $\varphi\colon Q\to P$ has a unique $\chi$-equivariant  extension $\widetilde{\varphi}\colon Y\to X$. By Fact~\ref{fact_natural}, the pair~$(\widetilde{\varphi},\chi)$ induces a morphism of spectral sequences $\Phi\colon\uE^*_{*,*}\to E^*_{*,*}$ that in the $E^{\infty}$-page is associated with the homomorphism $\chi_*\colon \HH_*(\Z^m\times H)\to \HH_*(G)$. Hence $\chi_*(\uF_{i,j})\subseteq \CF_{i,j}$ for all~$i$ and~$j$.
Therefore the class $\CA(f_1,\ldots,f_n;u)=\chi_*(\mu_n\times u)$ belongs to~$\CF_{n,q}$ and 
projects onto the class in~$E^{\infty}_{n,q}$ represented by the element $\iota_{P}(u)=\Phi(\iota_{Q}(u))\in E^{1}_{n,q}$. 
\end{proof}

\section{Auxiliary spectral sequences~$E^*_{*,*}(\fN)$ and~$\hE^*_{*,*}(\fN)$}\label{section_aux_spectral}
Consider the complex of cycles~$\B_g$ and the cellular chain complex $C_*=C_*(\B_g)$. Throughout the rest of the present paper  $E^*_{*,*}$ is the spectral sequence from Section~\ref{section_CL} for the action of~$\I_g$ on~$\B_g$ and $\CF_{*,*}$ is the corresponding filtration in~$\HH_*(\I_g)$.

The Torelli group~$\I_g$ acts naturally on~$\M_n$ for each~$n$.
\begin{definition}
We say that an $\I_g$-orbit $\fN\subseteq\M_n$ is \textit{perfect} if no oriented multicurve $M\in\M$ contains two distinct submulticurves~$N_1$ and~$N_2$  belonging to~$\fN$.
\end{definition}

For each~$m$, the basis of~$C_m$ consists of all~$P_M$, where $M\in\M_m$. Consider an arbitrary $\I_g$-orbit $\fN\subseteq\M_n$. Let $K_m(\mathfrak{N})\subseteq C_m$ be the subgroup generated by all~$P_M$ such that $M$ does not contain an oriented submulticurve belonging to~$\mathfrak{N}$. Obviously, $K_m(\mathfrak{N})$ is invariant under the action of~$\I_g$ and $K_*(\mathfrak{N})$ is a chain subcomplex of~$C_*$. Consider the quotient chain complex of $\I_g$-modules 
$$
C_*^{\mathfrak{N}}=C_*/K_*(\mathfrak{N}),
$$
and denote its differential by~$\partial_{\mathfrak{N}}$. 

The group~$C_m^{\fN}$ is free abelian  with the basis consisting of all~$P_M$ such that $M\in\M_m$ and $M$ contains a submulticurve belonging to~$\fN$. (With some abuse of notation, we denote the image of~$P_M$ under the projection $C_m\to C_m^{\fN}$ again by~$P_M$.)  The group~$\I_g$ acts on~$C_m^{\fN}$ by permutations of the elements of this basis. Therefore 
the spectral sequence from Section~\ref{section_CL} for the action of~$\I_g$ on~$C_*^{\fN}$ is well defined. We denote this spectral sequence by~$E^*_{*,*}(\fN)$. By Fact~\ref{fact_natural}, the projection $C_*\to C_*^{\fN}$ induces a morphism of spectral sequences
$$
\Pi_{\fN}\colon E^*_{*,*}\to E^*_{*,*}(\fN).
$$

Now, choose an oriented multicurve $N\in\fN$. Let $C_m^N\subseteq C_m^{\fN}$ be the subgroup generated by all~$P_M$ such that  $M\in\M_m$ and $M$ contains~$N$. 

\begin{propos}
Suppose that $\fN$ is perfect. Then $C_*^N$ is a chain subcomplex of~$C_*^{\fN}$ and
$$
C_m^{\fN}=\Ind_{\Stab_{\I_g}(N)}^{\I_g}C_m^N
$$
for all~$m$.
\end{propos}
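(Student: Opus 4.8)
The plan is to establish the two claims separately, both resting on the perfectness hypothesis. Recall that $C_m^{\fN}$ is free abelian with basis $\{P_M : M\in\M_m,\ M\supseteq N' \text{ for some } N'\in\fN\}$, and that $\I_g$ permutes this basis. First I would verify that $C_*^N$ is a subcomplex, i.e.\ that $\partial_{\fN}(C_m^N)\subseteq C_{m-1}^N$. The differential $\partial$ of $C_*(\B_g)$ sends $P_M$ to an alternating sum of the codimension-one faces $P_{M'}$, which correspond exactly to the oriented submulticurves $M'\subset M$ with $\dim P_{M'}=\dim P_M-1$. In $C_*^{\fN}$ we discard those faces $P_{M'}$ for which $M'$ contains no member of $\fN$. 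So I must show: if $M\supseteq N$ and $M'\subset M$ is a facet that still contains some member of $\fN$, then in fact $M'\supseteq N$. Here is where perfectness enters. The multicurve $M$ contains $N\in\fN$; if $M'$ contained a member $N_2\in\fN$ with $N_2\ne N$, then $M\supseteq M'$ would contain two distinct members $N$ and $N_2$ of $\fN$, contradicting perfectness. Hence the only member of $\fN$ that any face of $P_M$ can contain is $N$ itself, so every surviving facet contains $N$; thus $\partial_{\fN}(P_M)\in C_{m-1}^N$, and $C_*^N$ is a subcomplex.

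Next I would prove the induced-module identity. Set $H=\Stab_{\I_g}(N)$. The natural inclusion $C_m^N\hookrightarrow C_m^{\fN}$ is $H$-equivariant, so by the universal property of induction it extends to a $\I_g$-module homomorphism
$$
\Psi\colon \Ind_{H}^{\I_g}C_m^N = \Z\I_g\otimes_H C_m^N \to C_m^{\fN},\qquad g\otimes P_M\mapsto g\cdot P_M.
$$
The goal is to show $\Psi$ is an isomorphism. I would do this by comparing bases. A $\Z$-basis for $\Ind_H^{\I_g}C_m^N$ is given by $\{g_\alpha\otimes P_M\}$, where $\{g_\alpha\}$ runs over a set of representatives of the left cosets $\I_g/H$ and $P_M$ runs over the basis $\{P_M : M\supseteq N\}$ of $C_m^N$. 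Under $\Psi$ this maps to $g_\alpha\cdot P_M=P_{g_\alpha M}$. So it suffices to prove that the map $(g_\alpha H, P_M)\mapsto P_{g_\alpha M}$ is a bijection onto the basis of $C_m^{\fN}$.

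For surjectivity: any basis element $P_{M''}$ of $C_m^{\fN}$ has $M''$ containing some $N''\in\fN$; since $\fN$ is a single $\I_g$-orbit, $N''=gN$ for some $g\in\I_g$, whence $g^{-1}M''\supseteq N$, so $P_{g^{-1}M''}$ lies in the basis of $C_m^N$ and $P_{M''}=\Psi(g\otimes P_{g^{-1}M''})$. For injectivity, the key point is again perfectness, and I expect this to be the main obstacle requiring care. Suppose $P_{g_\alpha M}=P_{g_\beta M'}$, i.e.\ $g_\alpha M=g_\beta M'$ as oriented multicurves, with $M,M'\supseteq N$. Then $g_\alpha M$ contains both $g_\alpha N$ and $g_\beta N$, and these are two members of $\fN$ sitting inside a single multicurve; perfectness forces $g_\alpha N=g_\beta N$, hence $g_\beta^{-1}g_\alpha\in H$, so $g_\alpha H=g_\beta H$ and, since $g_\alpha$ and $g_\beta$ are coset representatives, $g_\alpha=g_\beta$; then $M=M'$ follows immediately. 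Thus distinct basis elements of the induced module map to distinct basis elements of $C_m^{\fN}$, establishing injectivity. Since $\Psi$ carries a $\Z$-basis bijectively onto a $\Z$-basis, it is an isomorphism of $\I_g$-modules, completing the proof. The one subtlety I would watch throughout is the bookkeeping of orientations: an element of $\I_g$ fixing a multicurve fixes each component with its orientation (by Ivanov's theorem, invoked earlier for the ``without rotations'' property), so the equalities ``$g_\alpha M=g_\beta M'$'' are genuine equalities of \emph{oriented} multicurves and no spurious sign issues arise.
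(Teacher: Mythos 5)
Your proof is correct and is essentially the paper's own argument. The subcomplex step is identical (perfectness forces every facet of~$P_M$ surviving in~$C_*^{\fN}$ to contain~$N$), and your bijectivity check for the canonical map $\Ind_{\Stab_{\I_g}(N)}^{\I_g}C_m^N\to C_m^{\fN}$ invokes perfectness at exactly the two points where the paper does --- to see that multicurves containing~$N$ which lie in one $\I_g$-orbit already lie in one $\Stab_{\I_g}(N)$-orbit, and that any $M\supseteq N$ contains no other member of~$\fN$ --- the only difference being that the paper packages this bookkeeping as a decomposition of~$C_m^N$ and~$C_m^{\fN}$ into cyclic permutation modules indexed by $\Stab_{\I_g}(N)$-orbit representatives, whereas you verify it directly as a basis count for the map coming from the universal property of induction.
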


\begin{proof}
To prove that $C_*^N$ is a chain subcomplex of~$C_*^{\fN}$, we need to show that $\partial_{\fN}P_M\in C_{m-1}^N$ whenever $M\in\M_m$ and $M\supseteq N$. We immediately see from the construction of~$C_*^{\fN}$ that  $\partial_{\fN}P_M$ is an algebraic sum  of all~$P_{L}$ such that $L\in\M_{m-1}$, $L\subseteq M$, and $L$ contains an oriented multicurve in the orbit~$\fN$. However, since $\fN$ is perfect, $M$ does not contain a multicurve in~$\fN$ different from~$N$. Hence $L\supseteq N$ for all~$P_L$ entering~$\partial_{\fN}P_M$. Therefore $ \partial_{\fN}P_M\in C_{m-1}^N$.

Let $M_1, M_2, \ldots$ be representatives of all different $\Stab_{\I_g}(N)$-orbits of oriented multicurves $M\in\M_m$ containing~$N$. Then $C_m^N=\bigoplus A_i$, where $A_i$ is a cyclic $\Stab_{\I_g}(N)$-module with generator~$M_i$. 
Let us prove that no two different multicurves~$M_i$ and~$M_j$ belong to the same $\I_g$-orbit. Indeed, suppose that $M_j=f(M_i)$, where $f\in\I_g$. Then $f(N)$ is an oriented multicurve in~$\fN$ that is contained in~$M_j$. Since $\fN$ is perfect, we obtain that $f(N)=N$, hence, $f\in\Stab_{\I_g}(N)$, which is impossible, since $M_i$ and~$M_j$ are representatives of different  $\Stab_{\I_g}(N)$-orbits. 

The basis of~$C_m^{\fN}$ consists of all~$P_{M}$ such that $M\in\M_m$ and $M$ contains a multicurve belonging to~$\fN$. Any such~$M$ can be written as $f(M_i)$ for certain~$i$ and certain~$f\in\I_g$. Moreover, the multicurve~$M_i$ is uniquely determined by~$M$, since no two different multicurves~$M_i$ and~$M_j$ belong to the same $\I_g$-orbit. Hence $C_m^{\fN}=\bigoplus B_i$, where $B_i$ is a cyclic $\I_g$-module with generator~$M_i$. Besides, for each~$i$, the multicurve~$N$ is the only multicurve in~$\fN$ that is contained in~$M_i$. Therefore $\Stab_{\I_g}(M_i)\subseteq \Stab_{\I_g}(N)$. It easily follows that $B_i=\Ind_{\Stab_{\I_g}(N)}^{\I_g}A_i$ for all~$i$. Thus, $C_m^{\fN}=\Ind_{\Stab_{\I_g}(N)}^{\I_g}C_m^N$.
\end{proof}

By Fact~\ref{fact_ind}, the spectral sequence~$E^*_{*,*}(\fN)$ for the action of~$\I_g$ on~$C_*^{\fN}$ coincides (starting from the $E^1$-page) with the spectral sequence for the action of~$\Stab_{\I_g}(N)$ on~$C_*^{N}$.

Now, let $\BP(N)$ be the subgroup of~$\I_g$ generated by all BP maps corresponding to bounding pairs of components of~$N$. Consider  the following variant of the Birman--Lubotzky--McCarthy exact sequence (cf.~\cite[Section~6.2]{BBM07}):
\begin{equation}\label{eq_BLMC}
1\to \BP(N)\to \Stab_{\I_g}(N)\xrightarrow{j_N}\PMod(S_g\setminus N).
\end{equation}
Here $\PMod(S_g\setminus N)$ is the \textit{pure mapping class group} of~$S_g\setminus N$, that is, the subgroup of~$\Mod(S_g\setminus N)$ consisting of all mapping classes that stabilize every puncture of~$S_g\setminus N$. Obviously, $\PMod(S_g\setminus N)$ is the direct product of the groups~$\PMod(Y_i)$ over all connected components~$Y_i$ of~$S_g\setminus N$. We put 
\begin{equation*}
H_N=j_N\bigl(\Stab_{\I_g}(N)\bigr)\cong \Stab_{\I_g}(N)/\BP(N).
\end{equation*}

Obviously, the action of~$\BP(N)$  on~$C_*^N$ is trivial. Therefore, $C_*^N$ is a chain complex of $H_N$-modules. Let $\hE^*_{*,*}(\fN)$ be the spectral sequence from Section~\ref{section_CL} for the action of~$H_N$ on~$C_*^N$. By Fact~\ref{fact_natural}, the homomorphism~$j_N$ induces a morphism of spectral sequences
$$
J_{\fN}\colon E^*_{*,*}(\fN)\to\hE^*_{*,*}(\fN).
$$
It is not hard to see that the spectral sequence~$\hE^*_{*,*}(\fN)$ and the morphism~$J_{\fN}$ are independent (up to a canonical isomorphism) of the choice of a multicurve $N\in\fN$. Indeed, if $N'$ is another multicurve in~$\fN$, then for any mapping class $f\in\I_g$ taking $N$ to~$N'$, the conjugation by~$f$ yields a natural isomorphism from the spectral sequence for the action of~$H_N$ on~$C_*^N$ to the spectral sequence for the action of~$H_{N'}$ on~$C_*^{N'}$, and this isomorphism is independent of the choice of~$f$.

Finally, we obtain the morphism of spectral sequences
$$
\hPi_{\fN}=J_{\fN}\circ\Pi_{\fN}\colon E^*_{*,*}\to\hE^*_{*,*}(\fN).
$$

The canonical isomorphism~\eqref{eq_E1pq} takes the form
\begin{equation}\label{eq_hE1}
\hE^1_{p,q}(\fN)\cong\bigoplus_{M\in\M_p\colon M\supseteq N}\HH_q(\Stab_{H_N}(M)).
\end{equation}
In particular, if $N\in\M_n$, then
\begin{align}
\hE^1_{p,q}(\fN)&=0,\qquad p<n,\label{eq_hE1_N_zero}\\
\hE^1_{n,q}(\fN)&\cong \HH_q(H_N).\label{eq_hE1_N}
\end{align}
The latter isomorphism is inverse to~$\iota_{P_N}$. Hence, by Remark~\ref{remark_sign}, it changes sign whenever one reverses the orientation of~$P_N$.

\section{$\alpha$-multicurves}\label{section_alpha}

To prove Theorems~\ref{theorem_Abelian_explicit} and~\ref{theorem_explicit} we will apply the construction of auxiliary spectral sequences from the previous section to some special oriented multicurves~$N$, which will be called \textit{$\alpha$-multicurves}. We will introduce two types of $\alpha$-multicurves; type one $\alpha$-multicurves will be used in the proof of Theorem~\ref{theorem_Abelian_explicit} and type two $\alpha$-multicurves will be used in the proof of Theorem~\ref{theorem_explicit}.

\begin{definition}\label{defin_alpha}
A \textit{type one $\alpha$-multicurve} is a $(2g-2)$-component oriented multicurve
\begin{equation}\label{eq_N}
N=\alpha_0\cup\alpha_1\cup\alpha_1'\cup\cdots\cup\alpha_{g-2}\cup\alpha_{g-2}'\cup\alpha_{g-1}
\end{equation}
that satisfies the following conditions (see Fig.~\ref{fig_alpha}):
\begin{enumerate}
 \item all components of~$N$ are non-separating simple closed curves,
 \item $\alpha_i'$ is homologous to~$\alpha_i$ for $i=1,\ldots,g-2$,
 \item $S_g\setminus N$ consists of~$g-1$ connected components~$Y_1,\ldots,Y_{g-1}$  such that
 every $Y_i$ is a four-punctured sphere that is adjacent to~$\alpha_i$ and~$\alpha_{i-1}'$ from the left and is adjacent to~$\alpha_i'$ and~$\alpha_{i-1}$ from the right, where we use the convention $\alpha_0'=\alpha_0$ and $\alpha_{g-1}'=\alpha_{g-1}$.
 \item $x=l_0[\alpha_0]+\cdots+l_{g-1}[\alpha_{g-1}]$ for some positive integers $l_0,\ldots,l_{g-1}$.
\end{enumerate}
\end{definition}

\begin{figure}
\begin{tikzpicture}[scale=.8]

\draw[red, thick, snake=brace] (2.9,-1.75) -- (-1.4,-1.75) 
node[pos=.47,below]{$Y_1$};
\draw[red, thick, snake=brace] (5.9,-1.75) -- (3.1,-1.75)
node[pos=.47,below]{$Y_2$};
\draw[red, thick, snake=brace] (16.4,-1.75) -- (12.1,-1.75)
node[pos=.42,below]{$Y_{g-1}$};

\draw[red, thick] (3,1.5) arc (90:270:.15 and .55); 
\draw[red, thick,->] (3,.4) arc (270:175:.15 and .55); 
\draw[red, thick, dashed] (3,1.5) arc (90:-90:.15 and .55)
node[pos=0.5,right]{\small{${}\!\alpha_{1}$}}; 

\draw[red, thick] (3,-.4) arc (90:270:.15 and .55); 
\draw[red, thick,->] (3,-.4) arc (90:185:.15 and .55); 
\draw[red, thick, dashed] (3,-.4) arc (90:-90:.15 and .55)
node[pos=0.5,right]{\small{${}\!\alpha_{1}'$}}; 

\draw[red, thick] (6,1.5) arc (90:270:.15 and .55); 
\draw[red, thick,->] (6,.4) arc (270:175:.15 and .55); 
\draw[red, thick, dashed] (6,1.5) arc (90:-90:.15 and .55)
node[pos=0.5,right]{\small{${}\!\alpha_{2}$}}; 

\draw[red, thick] (6,-.4) arc (90:270:.15 and .55); 
\draw[red, thick,->] (6,-.4) arc (90:185:.15 and .55); 
\draw[red, thick, dashed] (6,-.4) arc (90:-90:.15 and .55)
node[pos=0.5,right]{\small{${}\!\alpha_{2}'$}}; 

\draw[red, thick] (12,1.5) arc (90:270:.15 and .55); 
\draw[red, thick,->] (12,.4) arc (270:175:.15 and .55); 
\draw[red, thick, dashed] (12,1.5) arc (90:-90:.15 and .55)
node[pos=0.5,right]{\small{${}\!\alpha_{g-2}$}}; 

\draw[red, thick] (12,-.4) arc (90:270:.15 and .55); 
\draw[red, thick,->] (12,-.4) arc (90:185:.15 and .55); 
\draw[red, thick, dashed] (12,-.4) arc (90:-90:.15 and .55)
node[pos=0.5,right]{\small{${}\!\alpha_{g-2}'$}}; 

\draw[red, thick] (-1.5,0) arc (180:360:.55 and .15)
node[pos=0.5,below]{\small{${}\!\alpha_0$}}; 
\draw[red, thick,->] (-.4,0) arc (0:-95:.55 and .15); 
\draw[red, thick, dashed] (-.4,0) arc (0:180:.55 and .15); 

\draw[red, thick] (15.4,0) arc (180:360:.55 and .15)
node[pos=0.5,below]{\small{${}\!\!\alpha_{g-1}$}}; 
\draw[red, thick,->] (15.4,0) arc (180:275:.55 and .15); 
\draw[red, thick, dashed] (16.5,0) arc (0:180:.55 and .15);

\draw[black, ultra thick] (0,1.5) -- (15,1.5) arc (90:-90:1.5) -- (0,-1.5) arc (270:90:1.5);
\draw[black, ultra thick] (0,0) circle (.4);
\draw[black, ultra thick] (3,0) circle (.4);
\draw[black, ultra thick] (6,0) circle (.4);
\draw[black, ultra thick] (12,0) circle (.4);
\draw[black, ultra thick] (15,0) circle (.4);

\fill[black] (9,0) circle (1.5pt);
\fill[black] (9.3,0) circle (1.5pt);
\fill[black] (8.7,0) circle (1.5pt);

\end{tikzpicture}
\caption{Type one $\alpha$-multicurve}\label{fig_alpha}
\bigskip
\bigskip
\bigskip
\begin{tikzpicture}[scale=.8]

\draw[red, thick, snake=brace] (2.4,-1.75) -- (-1.4,-1.75) 
node[pos=.47,below]{$Y_1$};
\draw[red, thick, snake=brace] (4.9,-1.75) -- (2.6,-1.75)
node[pos=.47,below]{$Y_2$};
\draw[red, thick, snake=brace] (16.4,-1.75) -- (9.6,-1.75)
node[pos=.46,below]{$Y_{n+1}$};

\draw[red, thick] (2.5,1.5) arc (90:270:.15 and .55); 
\draw[red, thick,->] (2.5,.4) arc (270:175:.15 and .55); 
\draw[red, thick, dashed] (2.5,1.5) arc (90:-90:.15 and .55)
node[pos=0.5,right]{\small{${}\!\alpha_{1}$}}; 

\draw[red, thick] (2.5,-.4) arc (90:270:.15 and .55); 
\draw[red, thick,->] (2.5,-.4) arc (90:185:.15 and .55); 
\draw[red, thick, dashed] (2.5,-.4) arc (90:-90:.15 and .55)
node[pos=0.5,right]{\small{${}\!\alpha_{1}'$}}; 

\draw[red, thick] (5,1.5) arc (90:270:.15 and .55); 
\draw[red, thick,->] (5,.4) arc (270:175:.15 and .55); 
\draw[red, thick, dashed] (5,1.5) arc (90:-90:.15 and .55)
node[pos=0.5,right]{\small{${}\!\alpha_{2}$}}; 

\draw[red, thick] (5,-.4) arc (90:270:.15 and .55); 
\draw[red, thick,->] (5,-.4) arc (90:185:.15 and .55); 
\draw[red, thick, dashed] (5,-.4) arc (90:-90:.15 and .55)
node[pos=0.5,right]{\small{${}\!\alpha_{2}'$}}; 

\draw[red, thick] (9.5,1.5) arc (90:270:.15 and .55); 
\draw[red, thick,->] (9.5,.4) arc (270:175:.15 and .55); 
\draw[red, thick, dashed] (9.5,1.5) arc (90:-90:.15 and .55)
node[pos=0.5,right]{\small{${}\!\alpha_{n}$}}; 

\draw[red, thick] (9.5,-.4) arc (90:270:.15 and .55); 
\draw[red, thick,->] (9.5,-.4) arc (90:185:.15 and .55); 
\draw[red, thick, dashed] (9.5,-.4) arc (90:-90:.15 and .55)
node[pos=0.5,right]{\small{${}\!\alpha_{n}'$}}; 

\draw[red, thick] (-1.5,0) arc (180:360:.55 and .15)
node[pos=0.5,below]{\small{${}\!\alpha_0$}}; 
\draw[red, thick,->] (-.4,0) arc (0:-95:.55 and .15); 
\draw[red, thick, dashed] (-.4,0) arc (0:180:.55 and .15);

\draw[black, ultra thick] (0,1.5) -- (15,1.5) arc (90:-90:1.5) -- (0,-1.5) arc (270:90:1.5);
\draw[black, ultra thick] (0,0) circle (.4);
\draw[black, ultra thick] (2.5,0) circle (.4);
\draw[black, ultra thick] (5,0) circle (.4);
\draw[black, ultra thick] (9.5,0) circle (.4);
\draw[black, ultra thick] (12,0) circle (.4);
\draw[black, ultra thick] (15,0) circle (.4);

\fill[black] (6.95,0) circle (1.5pt);
\fill[black] (7.25,0) circle (1.5pt);
\fill[black] (7.55,0) circle (1.5pt);
\fill[black] (13.2,0) circle (1.5pt);
\fill[black] (13.5,0) circle (1.5pt);
\fill[black] (13.8,0) circle (1.5pt);

\end{tikzpicture}
\caption{Type two $\alpha$-multicurve}\label{fig_alpha2}
\end{figure}

\begin{definition}\label{defin_alpha2}
A \textit{type two $\alpha$-multicurve} is a $(2n+1)$-component oriented multicurve
\begin{equation*}
N=\alpha_0\cup\alpha_1\cup\alpha_1'\cup\cdots\cup\alpha_{n}\cup\alpha_{n}',
\end{equation*}
where $1\le n\le g-3$, that satisfies the following conditions (see Fig.~\ref{fig_alpha2}):
\begin{enumerate}
 \item all components of~$N$ are non-separating simple closed curves,
 \item $\alpha_i'$ is homologous to~$\alpha_i$ for $i=1,\ldots,n$,
 \item $S_g\setminus N$ consists of~$n+1$ connected components~$Y_1,\ldots,Y_{n+1}$  such that
 \begin{itemize}
\item for every $i=1,\ldots,n$, the component~$Y_i$ is a four-punctured sphere that is adjacent to~$\alpha_i$ and~$\alpha_{i-1}'$ from the left and is adjacent to~$\alpha_i'$ and~$\alpha_{i-1}$ from the right, where we use the convention $\alpha_0'=\alpha_0$,
\item $Y_{n+1}$ is a twice-punctured surface of genus $g'=g-n-1$ that is adjacent to~$\alpha_n$ from the right and to~$\alpha_n'$ from the left,
 \end{itemize}
 \item $x=l_0[\alpha_0]+\cdots+l_{n}[\alpha_{n}]$ for some positive integers $l_0,\ldots,l_{n}$.
\end{enumerate}
\end{definition}

From properties~(4) in Definitions~\ref{defin_alpha} and~\ref{defin_alpha2} it follows that any $\alpha$-multicurve~$N$ belongs to~$\M$, that is, defines a cell~$P_N$ of the complex of cycles~$\B_g$. Moreover, the cell~$P_N$ is isomorphic to the cube~$[0,1]^{g-2}$ for a type one $\alpha$-multicurve and to the cube~$[0,1]^n$ for a $(2n+1)$-component type two $\alpha$-multicurve. The isomorphisms $\varphi\colon [0,1]^{g-2}\to P_N$ (for type one) and $\varphi\colon [0,1]^n\to P_N$ (for type two) are given by
\begin{equation}\label{eq_isomorphism}
\varphi(t_1,\ldots,t_{g-2})= l_0\alpha_0+l_{g-1}\alpha_{g-1}+\sum_{i=1}^{g-2}l_i\bigl((1-t_i)\alpha_i+t_i\alpha_i'\bigr)
\end{equation}
and
\begin{equation}\label{eq_isomorphism2}
\varphi(t_1,\ldots,t_n)= l_0\alpha_0+\sum_{i=1}^{n}l_i\bigl((1-t_i)\alpha_i+t_i\alpha_i'\bigr),
\end{equation}
respectively.

It follows immediately from the definitions that, for any $\alpha$-multicurve~$N$ and any $h\in \I_g$, the multicurve~$h(N)$ is also an $\alpha$-multicurve. So the Torelli group~$\I_g$ acts on the set of isotopy classes of $\alpha$-multicurves.
To apply the construction from the previous section to $\alpha$-multicurves we need first to prove the following proposition.

\begin{propos}\label{propos_alpha_perfect}
For any $\alpha$-multicurve~$N$, the $\I_g$-orbit $\fN=\I_gN$ is perfect.
\end{propos}

\begin{proof}
Assume that $M\in\M$ is an oriented multicurve containing two different submulticurves that belong to the orbit~$\fN$. Acting by certain element of~$\I_g$, we may achieve that one of those two submulticurves is $N$ itself. Let~$\widetilde{N}\ne N$ be another submulticurve of~$M$ belonging to~$\fN$; then $\widetilde{N}=h(N)$ for a mapping class $h\in\I_g$. Denote the components of~$N$ as in Definitions~\ref{defin_alpha} or~\ref{defin_alpha2}. Since the multicurves~$N$ and~$\widetilde{N}$ consist of the same number of components, we see that~$\widetilde{N}$ contains a component~$\gamma$ that is not a component of~$N$. Then $\gamma$ is contained in one of the components~$Y_i$ of $S_g\setminus N$. If $Y_i$ were a four-punctured sphere, then $\gamma$ would cut $Y_i$ into two three-punctured spheres and hence would not be homologous to any of the curves~$\alpha_j$. So we would obtain a contradiction, since any component of~$\widetilde{N}$ must be homologous to one of the components of~$N$. If $N$ is a type one $\alpha$-multicurve, the proposition already follows, since all components of $S_g\setminus N$ are four-punctured spheres.

Now, suppose that $N$ is a $(2n+1)$-component type two $\alpha$-multicurve. Then it follows from the above reasoning that any component~$\gamma$ of~$\widetilde{N}$ that is not a component of~$N$ lies in $Y_{n+1}$ and is homologous to~$\alpha_n$. Therefore, $h(\alpha_i)=\alpha_i$ for $i=0,\ldots,n-1$, $h(\alpha'_i)=\alpha'_i$ for $i=1,\ldots,n-1$, and each of the curves~$h(\alpha_n)$ and~$h(\alpha_n')$ either coincides with one of the curves~$\alpha_n$ and~$\alpha_n'$ or lies in~$Y_{n+1}$. Since $h(Y_{n+1})$ is a connected component of~$S_g\setminus \bigl(h(\alpha_n)\cup h(\alpha_n')\bigr)$ and $h(Y_{n+1})$ does not contain~$h(\alpha_0)=\alpha_0$, it follows easily that $h(Y_{n+1})\subseteq Y_{n+1}$. Swapping $N$ and~$\widetilde{N}$ in the above reasoning, we likewise get that $Y_{n+1}\subseteq h(Y_{n+1})$. Therefore $h(Y_{n+1})=Y_{n+1}$. Consequently, $h(\alpha_n)=\alpha_n$ and~$h(\alpha_n')=\alpha_n'$ and hence $\widetilde{N}=N$, which contradicts the assumption made.
\end{proof}

\section{The spectral sequences~$\hE^*_{*,*}(\fN)$ corresponding to $\alpha$-mulicurves}
\label{section_alpha_E}

From Proposition~\ref{propos_alpha_perfect} it follows that, for each $\I_g$-orbit~$\fN=\I_gN$ of $\alpha$-multicurves, the construction from Section~\ref{section_aux_spectral} yields a well-defined spectral sequence~$\hE^*_{*,*}(\fN)$ with
$$
\hE^1_{p,q}(\fN)\cong\bigoplus_{M\in\M_p\colon M\supseteq N}\HH_q(\Stab_{H_N}(M))
$$
and a morphism of spectral sequences
$$
\hPi_{\fN}\colon E^*_{*,*}\to\hE^*_{*,*}(\fN).
$$
Our next goal is to study this spectral sequence. To do this, we need to study the homology of the groups~$\Stab_{H_N}(M)$ for multicurves~$M\in\M$ that contain~$N$. More precisely, we will study  the homology of the group~$H_N=\Stab_{H_N}(N)$ in detail, while for multicurves~$M$ stirctly containing~$N$ we will only estimate the cohomological dimension of~$\Stab_{H_N}(M)$. We consecutively consider the cases of $\alpha$-multicurves of types one and two.

\subsection{Type one}
Let $N$ be a type one $\alpha$-multicurve with components denoted as in Definition~\ref{defin_alpha}. Recall that, by definition, the group~$H_N$ is the image of the homomorphism
$$
j_N\colon \Stab_{\I_g}(N)\to\PMod(S_g\setminus N).
$$
We have
$$
\PMod (S_g\setminus N)=\PMod(Y_1)\times\cdots\times\PMod(Y_{g-1}).
$$

 Set $a_i=[\alpha_i]$ for $i=0,\ldots,g-1$.
 For every $i=1,\ldots,g-1$, take a simple closed curve~$\delta_i\subset Y_i$ separating $\alpha_{i-1}$ and~$\alpha'_{i-1}$ from $\alpha_i$ and~$\alpha_i'$ and a simple closed curve $\gamma_i\subset Y_i$ that represents the homology class~$a_i-a_{i-1}$ and has geometric intersection number~$2$ with~$\delta_i$, see Fig.~\ref{fig_gamma_delta}.

\begin{figure}
\begin{tikzpicture}[scale=.8]

\draw[mygreen, thick] (1.5,-1.5) arc (270:90:0.3 and 1.5);
\draw[mygreen, thick, dashed] (1.5,-1.5) arc (-90:90:0.3 and 1.5)
node[pos=0.21,right]{${}\!\delta_i$};

\draw[violet, thick] (.4,0) arc (180:360:1.1 and .25)
node[pos=0.8,below]{$\gamma_i$};
\draw[violet, thick,->] (2.6,0) arc (0:-90:1.1 and .25);
\draw[violet, thick, dashed] (.4,0) arc (180:0:1.1 and .25);

\draw[red, very thick] (3,1.5) arc (90:270:.15 and .55); 
\draw[red, very thick,->] (3,.4) arc (270:175:.15 and .55); 
\draw[red, very thick] (3,1.5) arc (90:-90:.15 and .55)
node[pos=0.5,right]{\small{${}\!\alpha_{i}$}}; 

\draw[red, very thick] (3,-.4) arc (90:270:.15 and .55); 
\draw[red, very thick,->] (3,-.4) arc (90:185:.15 and .55); 
\draw[red, very thick] (3,-.4) arc (90:-90:.15 and .55)
node[pos=0.5,right]{\small{${}\!\alpha_{i}'$}}; 

\draw[red, very thick] (0,1.5) arc (90:270:.15 and .55); 
\draw[red, very thick,->] (0,.4) arc (270:175:.15 and .55); 
\draw[red, very thick, dashed] (0,1.5) arc (90:-90:.15 and .55)
node[pos=0.5,right]{\small{${}\!\alpha_{i-1}$}}; 

\draw[red, very thick] (0,-.4) arc (90:270:.15 and .55); 
\draw[red, very thick,->] (0,-.4) arc (90:185:.15 and .55); 
\draw[red, very thick, dashed] (0,-.4) arc (90:-90:.15 and .55)
node[pos=0.5,right]{\small{${}\!\alpha_{i-1}'$}};

\draw[black, ultra thick] (0,1.5) -- (3,1.5);
\draw[black, ultra thick] (0,-1.5) -- (3,-1.5);
\draw[black, ultra thick] (0,-.4) arc (-90:90:.4);
\draw[black, ultra thick] (3,.4) arc (90:270:.4);

\end{tikzpicture}
\caption{Curves~$\delta_i$ and~$\gamma_i$}\label{fig_gamma_delta}
\end{figure}

Denote by~$\F_{\infty}$ a free group with countably many generators.

\begin{propos}\label{propos_stabilizer}
We have
\begin{equation}\label{eq_HN}
H_N= \underbrace{\F_{\infty}\times\cdots\times\F_{\infty}}_{g-1 \text{ \textnormal{factors}}},
\end{equation}
where, for each~$i$,  the $i$\textsuperscript{th} factor~$\F_{\infty}$ is the subgroup of~$\PMod(Y_i)$ generated freely by the elements $y_{i,k}=T_{\gamma_i}^kT_{\delta_i}T_{\gamma_i}^{-k}$ with $k\in\Z$.\end{propos}

\begin{proof}
Every~$\PMod(Y_i)$ is the free group with two generators~$T_{\delta_i}$ and~$T_{\gamma_i}$. Consider the homomorphism $\kappa_i\colon\PMod(Y_i)\to\Z$ such that $\kappa_i(T_{\delta_i})=0$ and~$\kappa_i(T_{\gamma_i})=1$. It is a standard fact that the kernel of this homomorphism is the group~$\F_{\infty}$ generated freely by the elements~$y_{i,k}=T_{\gamma_i}^kT_{\delta_i}T_{\gamma_i}^{-k}$, where $k$ runs over~$\Z$. Let
$$
\kappa\colon \PMod(S_g\setminus N)\to \Z^{g-1}
$$
be the direct product of the homomorphisms~$\kappa_1,\ldots,\kappa_{g-1}$. Then the kernel of~$\kappa$ is exactly the direct product $\F_{\infty}\times\cdots\times\F_{\infty}$ in the right-hand side of~\eqref{eq_HN}. So we need to prove that  $H_N=\ker\kappa$. Since $y_{i,k}$ is the Dehn twist about the separating simple closed curve~$T_{\gamma_i}^k(\delta_i)$, which is disjoint from~$N$, we see that $y_{i,k}\in H_N$. Hence $\ker\kappa\subseteq H_N$.

Let us prove that this inclusion is in fact an equality. The mapping classes
$$
T_{\bgamma}^{\bk}=T_{\gamma_i}^{k_1}\cdots T_{\gamma_{g-1}}^{k_{g-1}},
$$
where $\bk=(k_1,\ldots,k_{g-1})$ runs over~$\Z^{g-1}$, are representatives of all cosets of the normal subgroup $\ker\kappa$ in~$\PMod(S_g\setminus N)$. So to prove that $H_N=\ker\kappa$ it is sufficient to show that $T_{\bgamma}^{\bk}\notin H_N$ unless $\bk=0$.

Recall that there is the following Birman--Lubotzky-McCarthy short exact sequence (see~\cite[Lemma~2.1]{BLM83}):
$$
1\to G(N)\to \Stab_{\Mod(S_g)}\bigl(\vec{N}\bigr)\to \PMod(S_g\setminus N)\to 1.
$$
Here $G(N)$ is the abelian group generated by the Dehn twists about components of~$N$ and~$\Stab_{\Mod(S_g)}\bigl(\vec{N}\bigr)$ is the subgroup of~$\Mod(S_g)$ consisting of all mapping classes that stabilize every component of~$N$ and preserve the orientation of every component of~$N$. The preimage of~$T_{\bgamma}^{\bk}$ in the group~$\Stab_{\Mod(S_g)}\bigl(\vec{N}\bigr)$ consists of all mapping classes
\begin{equation}\label{eq_preimage_of_fk}
T_{\gamma_i}^{k_1}\cdots T_{\gamma_{g-1}}^{k_{g-1}}h,
\end{equation}
where $h$ runs over~$G(N)$. From the definition of~$H_N$ it follows that $T_{\bgamma}^{\bk}\in H_N$ if and only if at least one of the mapping classes~\eqref{eq_preimage_of_fk} lies in the Torelli group~$\I_g$.

Vautaw~\cite[Theorem~3.1]{Vau02} proved that, for any multicurve~$K$, a product of powers of Dehn twists about components of~$K$ lies in the Torelli group if and only if it is a product of BP maps corresponding to bounding pairs of components of~$K$. Applying this result to the multicurve $K=N\cup\gamma_1\cup\cdots\cup\gamma_{g-1}$ and taking into account that none of the curves~$\gamma_i$ is homologous to any other of them or to a component of~$N$, we obtain that a mapping class of the form~\eqref{eq_preimage_of_fk} may belong to the Torelli group only if $\bk=0$. This completes the proof of the proposition.
\end{proof}

The next corollary follows immediately from Proposition~\ref{propos_stabilizer} and the K\"unneth theorem.

\begin{cor}\label{cor_Hg-1}
The cohomological dimension of the group~$H_N$ is equal to~$g-1$.
The top homology group~$\HH_{g-1}(H_N)$ is the free abelian group with basis consisting of the abelian cycles~$\CA(y_{1,k_1},\ldots,y_{g-1,k_{g-1}})$, where $\bk=(k_1,\ldots,k_{g-1})$ runs over~$\Z^{g-1}$.
\end{cor}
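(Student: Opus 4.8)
The plan is to read off the homology of $H_N$ directly from the product decomposition established in Proposition~\ref{propos_stabilizer} by means of the Künneth theorem. First I would recall the homology of a free group: it is concentrated in degrees~$0$ and~$1$. Thus for each of the $g-1$ factors in~\eqref{eq_HN} we have $H_0(\F_{\infty})=\Z$, the group $H_1(\F_{\infty})$ is free abelian with basis the classes $[y_{i,k}]$, $k\in\Z$ (the abelianization of a free group is free abelian on its free generators), and $H_q(\F_{\infty})=0$ for $q\ge 2$. This already accounts for the preceding corollary, since the total degree of a nonzero product of homology classes, one from each factor, is at most~$g-1$.

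Next, since every group $H_*(\F_{\infty})$ is free abelian, all Tor-terms in the Künneth formula for group homology vanish, and iterated application of the Künneth theorem yields a natural isomorphism
\[
H_*(H_N)\cong H_*(\F_{\infty})^{\otimes(g-1)}.
\]
The summand of total degree~$g-1$ can be obtained only by taking the degree-$1$ part of each of the $g-1$ factors, so
\[
H_{g-1}(H_N)\cong H_1(\F_{\infty})\otimes\cdots\otimes H_1(\F_{\infty}),
\]
which is free abelian with basis the tensor products $[y_{1,k_1}]\otimes\cdots\otimes[y_{g-1,k_{g-1}}]$, where $\bk=(k_1,\ldots,k_{g-1})$ runs over~$\Z^{g-1}$.

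It remains to identify these basis elements with the abelian cycles in the statement. Under the Künneth isomorphism the tensor product $[y_{1,k_1}]\otimes\cdots\otimes[y_{g-1,k_{g-1}}]$ corresponds to the homology cross product $[y_{1,k_1}]\times\cdots\times[y_{g-1,k_{g-1}}]$. On the other hand, the homomorphism $\chi\colon\Z^{g-1}\to H_N$ that defines $\CA(y_{1,k_1},\ldots,y_{g-1,k_{g-1}})$ is precisely the direct product of the homomorphisms $\Z\to\F_{\infty}$ sending the generator to $y_{i,k_i}$ in the $i$th factor, and the standard generator satisfies $\mu_{g-1}=\mu_1\times\cdots\times\mu_1$. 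Hence naturality of the cross product gives
\[
\CA(y_{1,k_1},\ldots,y_{g-1,k_{g-1}})=\chi_*(\mu_{g-1})=[y_{1,k_1}]\times\cdots\times[y_{g-1,k_{g-1}}],
\]
which is exactly the asserted basis.

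The argument is essentially routine once Proposition~\ref{propos_stabilizer} is in hand; the only point that requires care is the last identification, namely the compatibility of the abelian-cycle construction with the Künneth cross product. This rests on the standard fact that the fundamental class $\mu_{g-1}$ of the torus $\Z^{g-1}$ is the cross product of the fundamental classes $\mu_1$ of its circle factors, together with naturality of $\times$ under the factorwise maps defining~$\chi$.
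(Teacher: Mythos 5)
Your proposal is correct and follows exactly the route the paper intends: the corollary is stated without proof as an immediate consequence of Proposition~\ref{propos_stabilizer}, the point being precisely that the K\"unneth theorem applied to the product of $g-1$ free groups (whose homology is free abelian and concentrated in degrees $0$ and~$1$) forces $H_{g-1}(H_N)\cong H_1(\F_{\infty})^{\otimes(g-1)}$, with the basis elements identified with the abelian cycles $\CA(y_{1,k_1},\ldots,y_{g-1,k_{g-1}})$ via the cross product. Your careful verification of that last identification (compatibility of the abelian-cycle construction with $\times$ and the factorization $\mu_{g-1}=\mu_1\times\cdots\times\mu_1$) is the only step that needed an argument, and you supplied it correctly.
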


The obtained description of the top homology group~$\HH_{g-1}(H_N)$ depends on the choice of curves~$\delta_i$ and~$\gamma_i$. We would like to describe this homology group in more invariant terms.  Any choice of separating curves~$\delta_i$ in~$Y_i$ provides an orthogonal splitting
\begin{equation}\label{eq_H1split}
\HH_1(S_g)=U_0\oplus\cdots\oplus U_{g-1}
\end{equation}
such that $a_i\in U_i$ for $i=0,\ldots,g-1$. This splitting belongs to~$\CS$ and
$$
\Delta=\delta_1\cup\cdots\cup\delta_{g-1}
$$
is a $\delta$-multicurve for it, see Definition~\ref{defin_delta}.
We denote by~$\CS(N)$ the set of all splittings~\eqref{eq_H1split} corresponding to various choices of a $\delta$-multicurve~$\Delta$ with $\delta_i\subset Y_i$.

\begin{propos}\label{propos_indep}
Let $\Delta=\delta_1\cup\cdots\cup\delta_{g-1}$ and~$\widetilde{\Delta}=\tilde\delta_1\cup\cdots\cup\tilde\delta_{g-1}$ be two $\delta$-multicurves with $\delta_i\subset Y_i$ and $\tilde{\delta}_i\subset Y_i$ for all~$i$. Suppose that $\Delta$ and~$\widetilde{\Delta}$ give the same splitting~\eqref{eq_H1split} in homology. Then
 $$
 \CA\bigl(T_{\delta_1},\ldots,T_{\delta_{g-1}}\bigr)=\CA\bigl(T_{\tilde\delta_1},\ldots,T_{\tilde\delta_{g-1}}\bigr)
 $$
 in $\HH_{g-1}(H_N)$.
\end{propos}

\begin{proof}
  Extend $a_0,\ldots,a_{g-1}$ to a symplectic basis
  $$
  a_0,b_0,\ldots,a_{g-1},b_{g-1}
  $$
  of $\HH_1(S_g)$ so that $a_i, b_i$ is a symplectic basis of~$U_i$ for each~$i$.
  Using the Alexander method (cf.~\cite[Section~2.3]{FaMa12}), one easily obtains that there exists a mapping class~$f\in\Mod(S_g)$ that stabilizes every component of~$N$ (preserving the orientation of it) and takes every~$\delta_i$ to~$\tilde\delta_i$. Then $f$ stabilizes every homology class~$a_i$ and every summand~$U_i$. Hence, $f_*(b_i)=b_i+m_ia_i$ for some $m_i\in\Z$. Then the mapping class $$h=fT_{\alpha_0}^{m_0}\cdots T_{\alpha_{g-1}}^{m_{g-1}}$$
  lies in $\Stab_{\I_g}(N)$ and takes every~$\delta_i$ to~$\tilde\delta_i$.
Set $q=j_N(h)$. Then $q\in H_N$ and $T_{\tilde\delta_i}=qT_{\delta_i}q^{-1}$ for all~$i$. The proposition follows, since any group acts trivially on the homology of itself.
\end{proof}

For a splitting $\CU\in\CS(N)$, we put
$$
A_{\CU}=\CA(T_{\delta_1},\ldots,T_{\delta_{g-1}})\in \HH_{g-1}(H_N),
$$
where $\Delta=\delta_1\cup\cdots\cup\delta_{g-1}$ is a $\delta$-multicurve with $\delta_i\subset Y_i$ giving the splitting~$\CU$ in homology.
By Proposition~\ref{propos_indep} the homology class~$A_{\CU}$ is independent of the choice of~$\Delta$.

Now, suppose that curves~$\delta_i$ and~$\gamma_i$ are as in Proposition~\ref{propos_stabilizer} (see Fig.~\ref{fig_gamma_delta}). As in the proof of those proposition, we introduce notation
$$
T_{\bgamma}^{\bk}=T_{\gamma_1}^{k_1}\cdots T_{\gamma_{g-1}}^{k_{g-1}},
$$
where $\bk=(k_1,\ldots,k_{g-1})\in\Z^{g-1}$. Then
$$
T_{\bgamma}^{\bk}(\Delta)=T_{\gamma_1}^{k_1}(\delta_1)\cup\cdots\cup T_{\gamma_{g-1}}^{k_{g-1}}(\delta_{g-1})
$$
are $\delta$-multicurves disjoint from~$N$.

\begin{propos}\label{propos_diff_split}
 The multicurves~$T_{\bgamma}^{\bk}(\Delta)$ with $\bk\in\Z^{g-1}$ give pairwise different splittings~$\CU^{(\bk)}$ in homology. Moreover, the splittings~$\CU^{(\bk)}$ exhaust the whole set~$\CS(N)$.
\end{propos}

\begin{proof}
The fact that the splittings~$\CU^{(\bk)}$ are pairwise different can be checked by a direct computation. However, it also can be deduced from Corollary~\ref{cor_Hg-1} without any computation. Indeed, the mapping classes~$y_{i,k}=T_{\gamma_i}^kT_{\delta_i}T_{\gamma_i}^{-k}$ in Proposition~\ref{propos_stabilizer} are the Dehn twists about simple closed curves~$T_{\gamma_i}^k(\delta_i)$, so
$$
A_{\CU^{(\bk)}}=\CA(y_{1,k_1},\ldots,y_{g-1,k_{g-1}}).
$$
By Corollary~\ref{cor_Hg-1} these homology classes are linearly independent and hence pairwise different. Therefore, by Proposition~\ref{propos_indep} the splittings~$\CU^{(\bk)}$ are also pairwise different.

Let us now prove that the splittings~$\CU^{(\bk)}$ exhaust the whole set~$\CS(N)$. Suppose that $\widetilde{\CU}$ is an arbitrary splitting in~$\CS(N)$. Then $\widetilde{\CU}$ is given by a $\delta$-multicurve $\widetilde{\Delta}=\tilde\delta_1\cup\cdots\cup\tilde\delta_{g-1}$ with $\tilde\delta_i\subset Y_i$ for all~$i$. Since $Y_i$ is a four-punctured sphere and both curves~$\delta_i$ and~$\tilde\delta_i$ separate the punctures corresponding to~$\alpha_{i-1}$ and~$\alpha_{i-1}'$ from the punctures corresponding to~$\alpha_{i}$ and~$\alpha_{i}'$, we obtain that there exists a mapping class $q_i\in\PMod(Y_i)$ that takes~$\delta_i$ to~$\tilde\delta_i$. The product of the mapping classes~$q_i$ is a mapping class $q\in \PMod(S_g\setminus N)$ such that $q(\Delta)=\widetilde{\Delta}$. As in the proof of Proposition~\ref{propos_stabilizer}, we see that the elements~$T_{\bgamma}^{\bk}$ form a system of representatives of all cosets of~$H_N$ in~$\PMod(S_g\setminus N)$. Hence, one of the elements~$T_{\bgamma}^{\bk}$ lies in the same coset as~$q$. Then the mapping class $qT_{\bgamma}^{-\bk}$ belongs to~$H_N$ and takes the multicurve~$T_{\bgamma}^{\bk}(\Delta)$ to~$\widetilde{\Delta}$. By the definition of the group~$H_N$, there exists a mapping class $h\in\Stab_{\I_g}(N)$ satisfying $j_N(h)= qT_{\bgamma}^{-\bk}$. The mapping class~$h$ belongs to the Torelli group and takes~$T_{\bgamma}^{\bk}(\Delta)$ to~$\widetilde{\Delta}$. Therefore the multicurves~$T_{\bgamma}^{\bk}(\Delta)$ and~$\widetilde{\Delta}$ give the same splitting in homology. Thus, $\widetilde{\CU}=\CU^{(\bk)}$, which completes the proof of the proposition.
\end{proof}

We now can reformulate the second assertion of Corollary~\ref{cor_Hg-1} in the following invariant form.

\begin{cor} \label{cor_Hg-1inv}
The top homology group~$\HH_{g-1}(H_N)$ is the free abelian group with basis consisting of homology classes~$A_{\CU}$, where $\CU$ runs over the set~$\CS(N)$.
\end{cor}

Recall that the cell~$P_N$ of the complex of cycles is isomorphic to the cube~$[0,1]^{g-2}$, so $N\in\M_{g-2}$, see~\eqref{eq_isomorphism}. Let us now study the groups~$\Stab_{H_N}(M)$ for oriented multicurves~$M$ that contain~$N$ and belong to~$\M$.

\begin{propos}\label{propos_HM}
Suppose that $M\in\M_m$ and~$M\supseteq N$. Then
$$
\Stab_{H_N}(M)\cong \underbrace{\F_{\infty}\times\cdots\times\F_{\infty}}_{2g-3-m\text{ \textnormal{factors}}}
$$
In particular, $\cd(\Stab_{H_N}(M))= 2g-3-m.$
\end{propos}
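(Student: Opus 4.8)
The plan is to cut $S_g$ along $N$ and analyze $\Stab_{H_N}(M)$ factor by factor over the pieces $Y_1,\dots,Y_{g-1}$ of $S_g\setminus N$, using the product decomposition $H_N=\F_{\infty}^{(1)}\times\cdots\times\F_{\infty}^{(g-1)}$ from Proposition~\ref{propos_stabilizer} (I write $\F_{\infty}^{(i)}$ for the $i$th factor, supported on $Y_i$). First I would record the combinatorics of the extra curves. Every component of $M$ lies in $\cur$, hence is non-separating in $S_g$; since $M\supseteq N$, the components of $M$ not belonging to $N$ lie in $\bigsqcup_i Y_i$, so each is contained in a single four-punctured sphere $Y_i$. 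In a four-punctured sphere any two disjoint essential simple closed curves are isotopic, and a curve isotopic to a boundary component or bounding a once-punctured disc would be isotopic in $S_g$ to a component of $N$; hence each $Y_i$ contains at most one component $c_i$ of $M\setminus N$, and such a $c_i$ is essential in $Y_i$. Let $I$ be the set of indices with $c_i$ present. Adding each $c_i$ splits $Y_i$ in two, so $c(M)=(g-1)+|I|$, whence by~\eqref{eq_dim} $m=c(M)-1=(g-2)+|I|$ and the complement of $I$ has exactly $2g-3-m$ elements. As the $Y_i$ are disjoint and $\F_{\infty}^{(i)}$ is supported on $Y_i$, an element $(\phi_1,\dots,\phi_{g-1})\in H_N$ stabilizes $M$ iff each $\phi_i$ stabilizes $c_i$ (the curves of $N$ being automatically fixed), so
$$
\Stab_{H_N}(M)\cong\prod_{i=1}^{g-1}\Stab_{\F_{\infty}^{(i)}}(M_i),\qquad M_i=M\cap Y_i .
$$

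For $i\notin I$ we have $M_i=\varnothing$ and the $i$th factor contributes $\F_{\infty}^{(i)}\cong\F_{\infty}$. For $i\in I$ the complement of $c_i$ in $Y_i$ is a union of two pairs of pants with trivial pure mapping class groups, so $\Stab_{\PMod(Y_i)}(c_i)=\langle T_{c_i}\rangle\cong\Z$. Recalling from the proof of Proposition~\ref{propos_stabilizer} that $\F_{\infty}^{(i)}=\ker\kappa_i$, where $\kappa_i\colon\PMod(Y_i)\to\Z$ sends $T_{\delta_i}\mapsto 0$, $T_{\gamma_i}\mapsto 1$, we get
$$
\Stab_{\F_{\infty}^{(i)}}(c_i)=\langle T_{c_i}\rangle\cap\ker\kappa_i ,
$$
which is trivial as soon as $\kappa_i(T_{c_i})\neq 0$, i.e. as soon as no non-trivial power of $T_{c_i}$ lies in $H_N$.

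The heart of the matter, and the step I expect to be the main obstacle, is thus to prove $T_{c_i}^{\,k}\notin H_N$ for $k\neq 0$; here I would argue via the homological action. Since $c_i$ is essential in $Y_i$ it separates the four punctures into two pairs. With boundary orientations the four boundary classes sum to zero, and as $[\alpha_i]=[\alpha_i']=a_i$, $[\alpha_{i-1}]=[\alpha_{i-1}']=a_{i-1}$ are independent, this multiset must be $\{a_i,-a_i,a_{i-1},-a_{i-1}\}$; every two-element partial sum therefore lies in $\{0,\pm(a_i+a_{i-1}),\pm(a_i-a_{i-1})\}$. As $c_i$ is non-separating in $S_g$, its class is non-zero, so $[c_i]=p\,a_i+q\,a_{i-1}$ with $p,q\in\{\pm1\}$, in particular $pq\neq 0$. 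Now suppose $T_{c_i}^{\,k}\in H_N$. Since $H_N$ is the image of $\Stab_{\I_g}(N)$ under cutting along $N$ and the kernel of cutting is generated by the Dehn twists about the components of $N$, there are integers $n_l$ with $h=T_{c_i}^{\,k}\prod_l T_{\alpha_l}^{n_l}\in\I_g$, i.e. $h_*=\id$ on $H_1(S_g)$. The classes $a_0,\dots,a_{g-1}$ are mutually orthogonal isotropic vectors (primitive classes of the orthogonal summands $U_j$), so all transvections involved commute; choosing dual classes $b_j$ with $b_j\cdot a_l=\delta_{jl}$ one computes
$$
h_*(b_i)=b_i+(\text{a multiple of }a_i)+k\,p\,q\,a_{i-1},
$$
because powers of the $T_{\alpha_l}$ add to $b_i$ only multiples of $a_i$. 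Triviality of $h_*$ forces $kpq=0$, hence $k=0$; this gives $\Stab_{\F_{\infty}^{(i)}}(c_i)=1$.

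Combining the two cases yields $\Stab_{H_N}(M)\cong\F_{\infty}^{\,2g-3-m}$ (with the convention that an empty product is the trivial group when $m=2g-3$). Finally, a product of $2g-3-m$ copies of $\F_{\infty}$ has an aspherical $(2g-3-m)$-dimensional classifying space, namely a product of graphs, so $\cd(\Stab_{H_N}(M))\le 2g-3-m$; and $H_{2g-3-m}(\F_{\infty}^{\,2g-3-m})\cong H_1(\F_{\infty})^{\otimes(2g-3-m)}\neq 0$ gives the reverse inequality. Hence $\cd(\Stab_{H_N}(M))=2g-3-m$, completing the proof.
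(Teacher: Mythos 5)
Your proof is correct, and its overall architecture is the same as the paper's: use the product decomposition of $H_N$ from Proposition~\ref{propos_stabilizer}, observe that each four-punctured sphere $Y_i$ can contain at most one component of $M\setminus N$, count the occupied indices via~\eqref{eq_dim} (you count complementary regions, $c(M)=g-1+|I|$; the paper counts components, $|M|=m+g$ using $D(M)=g$ --- the two counts are equivalent by the same formula), and then show that an occupied factor contributes nothing to $\Stab_{H_N}(M)$. The genuine difference is in this last, crucial step, and there your treatment is more careful than the paper's. The paper argues: since $\PMod(Y_{i_s}\setminus\varepsilon_s)=1$, ``no non-trivial element of $\PMod(Y_{i_s})$ fixes a curve $\varepsilon_s$.'' Read literally this is false: the twist $T_{\varepsilon_s}$ itself stabilizes $\varepsilon_s$, and the cutting sequence gives $\Stab_{\PMod(Y_{i_s})}(\varepsilon_s)=\langle T_{\varepsilon_s}\rangle$, exactly as you say. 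What is actually needed is the statement you isolate and prove: no non-trivial power of $T_{\varepsilon_s}$ lies in $H_N$. Your argument for this --- lift $j_N(\tilde h)=T_{\varepsilon_s}^{k}$ to $\tilde h=T_{\varepsilon_s}^{k}\prod_\gamma T_{\gamma}^{n_\gamma}\in\I_g$, with $\gamma$ running over the components of $N$, and compute the transvection action on a dual class $b_{i_s}$ using $[\varepsilon_s]=\pm a_{i_s}\pm a_{i_s-1}$ --- is sound; the only slip is notational, in that the product should run over all $2g-2$ components of $N$ including the $\alpha_l'$, which changes nothing homologically since $[\alpha_l']=[\alpha_l]$. (An alternative route to the same point, closer in spirit to the proof of Proposition~\ref{propos_stabilizer}, is to check that $\kappa(T_{\varepsilon_s})=\pm1$, so that $\langle T_{\varepsilon_s}\rangle\cap\ker\kappa=1$; your homological computation is self-contained and avoids classifying curves in $Y_{i_s}$ up to the $\PMod$-action.) So your proposal not only reproves the proposition but also fills in the one imprecise sentence of the paper's own proof. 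Your closing derivation of $\cd(\Stab_{H_N}(M))=2g-3-m$ (a product of graphs for the upper bound, K\"unneth plus $H_1(\F_\infty)\neq0$ for the lower bound), which the paper leaves to the reader, is also correct.
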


\begin{proof}
Recall that we denote by~$D(M)$ the rank of the subgroup of~$\HH_1(S_g)$ generated by the homology classes of the components of~$M$. Since this subgroup is isotropic with respect to the intersection form, we have $D(M)\le g$. On the other hand, $D(M)\ge D(N)=g$. Hence $D(M)=g$. Then formula~\eqref{eq_dim} gives
$$|M|=m+g=|N|+m-g+2.$$
Let $\varepsilon_1,\ldots,\varepsilon_{m-g+2}$ be the components of~$M$ that are not components of~$N$.  Since the components of~$M$ are pairwise non-isotopic, we see that two different components~$\varepsilon_s$ and~$\varepsilon_t$ cannot lie in the same component~$Y_i$ of~$S_g\setminus N$.
Suppose that $\varepsilon_s$ lies in~$Y_{i_s}$ for $s=1,\ldots,m-g+2$, and put
$$
I=\{i_1,\ldots,i_{m-g+2}\},\qquad I'=\{1,\ldots,g-1\}\setminus I.
$$
The curve~$\varepsilon_s$ decomposes~$Y_{i_s}$ into two pieces either of which is a three-punctured sphere. Hence $\PMod(Y_{i_s}\setminus\varepsilon_s)=1$. Therefore no non-trivial element of~$\PMod(Y_{i_s})$ fixes a curve~$\varepsilon_s$.
It follows easily that the subgroup~$\Stab_{H_N}(M)\subseteq H_N$ is exactly the direct product of the $2g-3-m$ factors with numbers in~$I'$ in the decomposition~\eqref{eq_HN}.
\end{proof}

Now, let $\fN$ denote the $\I_g$-orbit of the multicurve~$N$.
Since $N\in\M_{g-2}$, the following proposition is a direct consequence of formulae~\eqref{eq_hE1}--\eqref{eq_hE1_N} and Proposition~\ref{propos_HM}.
\begin{propos}
$\hE^1_{p,q}(\fN)=0$ whenever either  $p+q>2g-3$ or $p<g-2$.
\end{propos}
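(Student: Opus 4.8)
The plan is to read off both vanishing ranges directly from the page-$1$ description~\eqref{eq_hE1},
\[
\hE^1_{p,q}(\fN)\cong\bigoplus_{M\in\M_p\colon M\supseteq N}H_q(\Stab_{H_N}(M)),
\]
treating the two conditions $p<g-2$ and $p+q>2g-3$ separately. First I would dispose of the range $p<g-2$. Since $N\in\M_{g-2}$, any oriented multicurve $M\supseteq N$ has $P_N$ as a face of $P_M$, hence $\dim P_M\ge\dim P_N=g-2$; so for $p<g-2$ there is no $M\in\M_p$ containing $N$, the index set of the direct sum in~\eqref{eq_hE1} is empty, and $\hE^1_{p,q}(\fN)=0$. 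This is of course exactly formula~\eqref{eq_hE1_N_zero} specialized to $n=g-2$, which is the relevant dimension index because $\fN\subseteq\M_{g-2}$.

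For the range $p+q>2g-3$ I would argue summand by summand. Fix any $M\in\M_p$ with $M\supseteq N$. By Proposition~\ref{propos_HM}, the group $\Stab_{H_N}(M)$ is a direct product of $2g-3-p$ copies of~$\F_{\infty}$, so $\cd(\Stab_{H_N}(M))=2g-3-p$. A product of $2g-3-p$ free groups has homological dimension at most $2g-3-p$: each free factor has homological dimension~$1$, and the K\"unneth formula adds these up, so the homology of the product vanishes above degree $2g-3-p$. Hence $H_q(\Stab_{H_N}(M))=0$ whenever $q>2g-3-p$, i.e. whenever $p+q>2g-3$. As every summand on the right-hand side of~\eqref{eq_hE1} then vanishes, so does $\hE^1_{p,q}(\fN)$.

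There is essentially no obstacle here; the construction has been arranged precisely so that this step is mechanical. The one point deserving a word of care is that Proposition~\ref{propos_HM} is stated in terms of \emph{cohomological} dimension, whereas~\eqref{eq_hE1} involves \emph{homology}. This causes no difficulty, since $\Stab_{H_N}(M)$ is an explicit finite product of free groups, so its integral homology is computed by the K\"unneth formula and vanishes above degree $2g-3-p$; alternatively one simply invokes $\mathrm{hd}(G)\le\cd(G)$ for any group~$G$, which gives the same conclusion.
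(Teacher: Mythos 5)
Your proof is correct and takes essentially the same route as the paper, which derives the proposition as a direct consequence of~\eqref{eq_hE1}, \eqref{eq_hE1_N_zero} (your $p<g-2$ case, with the same face argument behind it), and Proposition~\ref{propos_HM} (your $p+q>2g-3$ case). Your extra remark on passing from cohomological dimension to vanishing of homology, via the K\"unneth theorem for the explicit product of free groups or via $\mathrm{hd}\le\cd$, is a valid filling-in of the one step the paper leaves implicit.
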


\begin{cor}\label{cor_hE_iso}
All differentials~$d^r$ with~$r\ge 1$ of the spectral sequence~$\hE^*_{*,*}(\fN)$ either from or to the groups $\hE^r_{g-2,\,g-1}(\fN)$ are trivial. Hence
$$
\hE^{\infty}_{g-2,\,g-1}(\fN)=\hE^1_{g-2,\,g-1}(\fN)\cong \HH_{g-1}(H_N).
$$
\end{cor}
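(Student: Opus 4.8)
The plan is to deduce the corollary directly from the preceding proposition by bookkeeping on the bidegrees of the differentials, so no serious work is required. Recall that $\hE^*_{*,*}(\fN)$ is a first-quadrant homological spectral sequence, so each differential $d^r$ has bidegree $(-r,r-1)$; that is, $d^r\colon\hE^r_{p,q}(\fN)\to\hE^r_{p-r,q+r-1}(\fN)$. Recall also that every page $\hE^{r+1}_{p,q}(\fN)$ is a subquotient of $\hE^r_{p,q}(\fN)$, so the vanishing of a group already on page~$\hE^1$ (which is what the proposition supplies) automatically propagates to all later pages.

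First I would examine the differentials leaving $\hE^r_{g-2,g-1}(\fN)$. For $r\ge 1$, such a differential lands in $\hE^r_{g-2-r,\,g+r-2}(\fN)$, whose first index $g-2-r$ is strictly less than $g-2$. By the proposition (in the case $p<g-2$) this target group vanishes on page~$\hE^1$, hence on every page; thus every differential out of $\hE^r_{g-2,g-1}(\fN)$ is zero.

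Next I would examine the differentials entering $\hE^r_{g-2,g-1}(\fN)$. For $r\ge 1$, such a differential originates from $\hE^r_{g-2+r,\,g-r}(\fN)$, whose total degree is $(g-2+r)+(g-r)=2g-2>2g-3$. By the proposition (in the case $p+q>2g-3$) this source group vanishes on page~$\hE^1$, hence on every page; thus every differential into $\hE^r_{g-2,g-1}(\fN)$ is also zero.

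Since both the incoming and the outgoing differentials at the position $(g-2,g-1)$ are trivial on every page, passing to homology gives $\hE^{r+1}_{g-2,g-1}(\fN)=\hE^r_{g-2,g-1}(\fN)$ for all $r\ge 1$, and therefore $\hE^{\infty}_{g-2,g-1}(\fN)=\hE^1_{g-2,g-1}(\fN)$. There is no genuine obstacle in this argument; the only point requiring care is to read off correctly the two endpoints of each $d^r$ and to invoke the appropriate half of the preceding proposition ($p<g-2$ for the outgoing differentials, $p+q>2g-3$ for the incoming ones).
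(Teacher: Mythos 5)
Your argument is correct and is exactly the (implicit) justification the paper intends for this corollary, which it states without proof: the bidegree $(-r,r-1)$ of $d^r$ places the target of every outgoing differential in the region $p<g-2$ and the source of every incoming differential in the region $p+q>2g-3$, both of which vanish by the preceding proposition. Nothing further is needed.
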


\subsection{Type two}\label{subsection_type2}
Let $N$ be a  $(2n+1)$-component type two $\alpha$-multicurve with components denoted as in Definition~\ref{defin_alpha2}.
Set $a_i=[\alpha_i]$ for $i=0,\ldots,n$. As for a type one $\alpha$-multicurve,
 for every $i=1,\ldots,n$, we take a simple closed curve~$\delta_i\subset Y_i$ separating $\alpha_{i-1}$ and~$\alpha'_{i-1}$ from $\alpha_i$ and~$\alpha_i'$, and a simple closed curve $\gamma_i\subset Y_i$ that represents the homology class~$a_i-a_{i-1}$ and has geometric intersection number~$2$ with~$\delta_i$, see Fig.~\ref{fig_gamma_delta}.

Recall that $Y_{n+1}$ is the oriented surface of genus~$g'=g-n-1$ with two punctures. Since $n\le g-3$, we have $g'\ge 2$. Let $\Sigma$ be the one-point compactification of~$Y_{n+1}$. Then $\Sigma$ is homeomorphic to a closed oriented surface of genus~$g'$ with two points identified. Since every homeomorphism of~$Y_{n+1}$ onto itself extends uniquely to~$\Sigma$, we obtain that the group~$\PMod(Y_{n+1})$ acts naturally on $\HH_1(\Sigma)\cong\Z^{2g'+1}$. Let~$G_N$ be the kernel of this action. The following analog of Proposition~\ref{propos_stabilizer} describies the subgroup
$$
H_N\subseteq\PMod(S_g\setminus N)=\PMod(Y_1)\times\cdots\times\PMod(Y_n)\times\PMod(Y_{n+1}).
$$

\begin{propos}\label{propos_stabilizer2}
We have
\begin{equation}\label{eq_HN2}
H_N= {\underbrace{\F_{\infty}\times\cdots\times\F_{\infty}}_{n \text{ \textnormal{factors}}}}\times G_N,
\end{equation}
where, for each~$i$,  the $i$\textsuperscript{th} factor~$\F_{\infty}$ is the subgroup of~$\PMod(Y_i)$ generated freely by the elements $y_{i,k}=T_{\gamma_i}^kT_{\delta_i}T_{\gamma_i}^{-k}$ with $k\in\Z$.
\end{propos}

\begin{proof}
We set
$$
Q_N=\PMod(Y_1)\times\cdots\times\PMod(Y_n).
$$
Literally as in the proof of Proposition~\ref{propos_stabilizer} we
\begin{itemize}
 \item construct a homomorphism $\kappa\colon Q_N\to\Z^n$ that takes each~$T_{\gamma_i}$ to the generator of the $i$\textsuperscript{th} factor~$\Z$ and each~$T_{\delta_i}$ to~$0$,
 \item prove that
 $$
 H_N\cap Q_N=\ker\kappa=\underbrace{\F_{\infty}\times\cdots\times\F_{\infty}}_{n \text{ factors}},
 $$
 where the $i$\textsuperscript{th} factor~$\F_{\infty}$ is freely generated by all~$y_{i,k}$ with $k
 \in\Z$.
\end{itemize}

The proposition will follow as soon as we manage to prove that $G_N$ is contained in~$H_N$ and the image of the composite map
\begin{equation}\label{eq_composite_HN}
\begin{tikzcd}
 H_N\arrow[hook]{r} & Q_N\times\PMod(Y_{n+1}) \arrow[rr, "\text{projection}" above] & &  \PMod(Y_{n+1})
\end{tikzcd}
\end{equation}
is contained in~$G_N$. Let us prove these two assertions in turn.

1. Let us prove that $G_N$ is contained in~$H_N$. Let~$\overline{Y}_{\!\!n+1}=Y_{n+1}\cup\alpha_n\cup\alpha_n'$ be the closure of~$Y_{n+1}$ in~$S_g$. Any mapping class $f\in \PMod(Y_{n+1})$ can be represented by an orientation preserving homeomorphism $\varphi\colon Y_{n+1}\to Y_{n+1}$ that extends to a homeomorphism $\overline{\varphi}\colon \overline{Y}_{\!\!n+1}\to \overline{Y}_{\!\!n+1}$ fixing every point of~$\alpha_n\cup\alpha_n'$. Extend~$\overline{\varphi}$ by the identity to a  homeomorphism $S_g\to S_g$ and let $h\in\Mod(S_g)$ be the mapping class represented by this homeomorphism. By the construction, $h$ stabilizes the homology class $a_n=[\alpha_n]$. Suppose that $f\in G_N$. Then  $h$ acts trivially on the quotient~$\HH_1(S_g)/\langle a_n\rangle$. It follows that the action of~$h$ on~$\HH_1(S_g)$ has the form $h_*(c)=c+s(c\cdot a_n) a_n$ for certain~$s\in\Z$ independent of~$c$. Then the mapping class~$hT_{\alpha_n}^{-s}$ belongs to~$\Stab_{\I_g}(N)$ and~$j_N(hT_{\alpha_n}^{-s})=f$. Therefore $f\in H_N$.

2. Let us prove that the image of the composite map~\eqref{eq_composite_HN} is contained in~$G_N$. Suppose that an element $qr$ belongs to~$H_N$, where $q\in Q_N$ and $r\in\PMod(Y_{n+1})$. We need to prove that $r$ lies in~$G_N$. We have $qr=j_N(h)$ for certain $h\in\Stab_{\I_g}(N)$.
Considering the commutative diagram
$$
\begin{tikzcd}
 Y_{n+1} \arrow[hook]{r} \arrow[hook]{d} & S_g  \arrow{d}  \\
 \Sigma & S_g/(S_g\setminus Y_{n+1}) \arrow[l,"\approx" above]
\end{tikzcd}
$$
we see that the action of~$r$ on~$\Sigma$ is the quotient of the action of~$h$ on~$S_g$. Since the quotient map $S_g\to\Sigma$ induces a surjective homomorphism $\HH_1(S_g)\to \HH_1(\Sigma)$ and $h$ is in the Torelli group, we obtain that $r$ acts trivially on~$\HH_1(\Sigma)$, that is, $r\in G_N$.
\end{proof}

We conveniently put
$$
F_N=H_N\cap Q_N=\underbrace{\F_{\infty}\times\cdots\times\F_{\infty}}_{n \text{ factors}}.
$$
Then $$H_N=F_N\times G_N.$$ We would like to compute the cohomological dimension of~$H_N$ and study the top homology group of it. To do this we need to compute the cohomological dimensions and study the top homology groups of both~$F_N$ and~$G_N$.

The study of the group~$F_N$ is completely similar to the study of the group~$H_N$ for a type one $\alpha$-multicurve~$N$ in the previous subsection. Let us formulate the analogs of Corollaries~\ref{cor_Hg-1} and~\ref{cor_Hg-1inv} and Propositions~\ref{propos_indep} and~\ref{propos_diff_split} in our present situation. We omit the proofs of these assertions, since they repeat literally the proofs of those Corollaries and Propositions.

\begin{cor}\label{cor_Hg-12}
The cohomological dimension of the group~$F_N$ is equal to~$n$.
The top homology group~$\HH_{n}(F_N)$ is the free abelian group with basis consisting of the abelian cycles~$\CA(y_{1,k_1},\ldots,y_{n,k_n})$, where $\bk=(k_1,\ldots,k_n)$ runs over~$\Z^n$.
\end{cor}

Any choice of separating curves~$\delta_i$ in~$Y_i$ for $i=1,\ldots,n$ gives a splitting
\begin{equation}\label{eq_H1split2}
\HH_1(S_g)=U_0\oplus\cdots\oplus U_n
\end{equation}
with $\rank U_i=2$ for $i=0,\ldots,n-1$ and $\rank U_n=2g-2n$. This splitting belongs to the set~$\CS_n$ defined in Subsection~\ref{subsection_A2}. We denote by $\CS_n(N)$ the set of all splittings that arise as splittings~\eqref{eq_H1split2} for various choices of the curves~$\delta_i$.

\begin{propos}\label{propos_indep2}
Let $\Delta=\delta_1\cup\cdots\cup\delta_n$ and~$\widetilde{\Delta}=\tilde\delta_1\cup\cdots\cup\tilde\delta_n$ be two $\delta$-multicurves with $\delta_i\subset Y_i$ and $\tilde{\delta}_i\subset Y_i$ for all~$i$. Suppose that $\Delta$ and~$\widetilde{\Delta}$ give the same splitting~\eqref{eq_H1split2} in homology. Then
 $$
 \CA\bigl(T_{\delta_1},\ldots,T_{\delta_n}\bigr)=\CA\bigl(T_{\tilde\delta_1},\ldots,T_{\tilde\delta_n}\bigr)
 $$
 in $\HH_n(F_N)$.
\end{propos}

Now, for each splitting $\CU\in\CS_n(N)$, we have a well-defined homology class
$$
A_{\CU}=\CA(T_{\delta_1},\ldots,T_{\delta_n})\in \HH_n(F_N),
$$
where $\Delta=\delta_1\cup\cdots\cup\delta_n$ is a $\delta$-multicurve with $\delta_i\subset Y_i$ giving the splitting~$\CU$ in homology.

Suppose that curves~$\delta_i$ and~$\gamma_i$ are as in Proposition~\ref{propos_stabilizer2} (see Fig.~\ref{fig_gamma_delta}). We put
$$
T_{\bgamma}^{\bk}=T_{\gamma_1}^{k_1}\cdots T_{\gamma_n}^{k_n},
$$
where $\bk=(k_1,\ldots,k_n)\in\Z^n$. Then
$$
T_{\bgamma}^{\bk}(\Delta)=T_{\gamma_1}^{k_1}(\delta_1)\cup\cdots\cup T_{\gamma_n}^{k_n}(\delta_n)
$$
is again a $\delta$-multicurves with the components in $Y_1,\ldots,Y_n$, respectively.

\begin{propos}\label{propos_diff_split2}
 The multicurves~$T_{\bgamma}^{\bk}(\Delta)$ with $\bk\in\Z^n$ give pairwise different splittings~$\CU^{(\bk)}$ in homology. Moreover, the splittings~$\CU^{(\bk)}$ exhaust the whole set~$\CS_n(N)$.
\end{propos}

\begin{cor} \label{cor_Hg-1inv2}
The top homology group~$\HH_n(F_N)$ is the free abelian group with basis consisting of homology classes~$A_{\CU}$, where $\CU$ runs over the set~$\CS_n(N)$.
\end{cor}

Let us now study the group~$G_N$. Note that, though we have included the restriction $n\ge 1$ in the definition of a type two $\alpha$-multicurve, we have never used this restriction in the proof of Proposition~\ref{propos_stabilizer2}. So this proposition remains true for $n=0$ as well, that is, for a multicurve~$N$ consisting of a single component.

Consider an auxiliary oriented closed genus~$g-n$ surface~$S_{g-n}$, a non-separating simple closed curve~$v$ on it, and choose an orientation-preserving homeomorphism $Y_{n+1}\approx S_{g-n}\setminus v$. From the exact sequence~\eqref{eq_BLMC} for a multicurve consisting of a single component~$v$ it follows that the homomorphism
$$
j_v\colon \Stab_{\I_{g-n}}(v)\to \PMod(S_{g-n}\setminus v)=\PMod(Y_{n+1})
$$
is injective. Then Proposition~\ref{propos_stabilizer2} for this one-component multicurve reads as follows.

\begin{propos}\label{propos_iso_G}
 The image of the homomorphism~$j_v$ coincides with the subgroup $G_N\subset \PMod(Y_{n+1})$, so
 $$
 j_v\colon \Stab_{\I_{g-n}}(v)\to G_N
 $$
 is an isomorphism.
\end{propos}

\begin{remark}
The facts that the subgroup~$G_N\subseteq \PMod(Y_{n+1})$ consisting of all mapping classes that act trivially on~$\HH_1(\Sigma)$ is a factor in~\eqref{eq_HN2} and coincides with the image of~$j_v$  are special cases of a result by Putman~\cite[Theorem~3.3]{Put07}. The only  difference is that Putman considered surfaces with boundary components rather than with punctures.
\end{remark}

Now, Theorem~\ref{theorem_BBM2} reads as follows.

\begin{cor}\label{cor_G_homology}
 The cohomological dimension of the group~$G_N$ is equal to~$3g-5-3n$. The top homology group~$\HH_{3g-5-3n}(G_N)$ contains a free abelian subgroup of infinite rank.
\end{cor}

Combining Corollaries~\ref{cor_Hg-12}, \ref{cor_Hg-1inv2}, and~\ref{cor_G_homology},  we finally get the following result.

\begin{propos}\label{propos_H_homology_total}
The cohomological dimension of the group~$H_N$ is equal to~$3g-5-2n$. The top homology group~$\HH_{3g-5-2n}(H_N)$ contains a free abelian subgroup of infinite rank. Moreover,
$$
\HH_{3g-5-2n}(H_N)\cong \HH_n(F_N)\otimes \HH_{3g-5-3n}(G_N).
$$
So if $\{\zeta_{\lambda}\}$ is a system of linearly independent Bestvina--Bux--Margalit classes in
$$
\HH_{3g-5-3n}(G_N)\cong \HH_{3g-5-3n}\bigl(\Stab_{\I_{g-n}}(v)\bigr)
$$
indexed by elements~$\lambda$ of certain infinite set~$\Lambda$, then the homology classes $A_{\CU}\times \zeta_{\lambda}$ are linearly independent in~$\HH_{3g-5-2n}(H_N)$, where $(\CU,\lambda)$ runs over $\CS_n(N)\times\Lambda$.
\end{propos}

\begin{remark}
 Note that
 $$
 A_{\CU}\times\zeta_{\lambda}=\CA(T_{\delta_1},\ldots,T_{\delta_n};\zeta_{\lambda})
 $$
 whenever a $\delta$-multicurve $\delta_1\cup\cdots\cup\delta_n$ gives the splitting~$\CU$ in homology.
\end{remark}

We are now going to obtain an analog of Proposition~\ref{propos_HM}. We start with the following proposition, which is substantially contained in~\cite{BBM07}. For the convenience of the reader, we explain how to extract its proof from~\cite{BBM07}.

\begin{propos}\label{propos_est_HM}
Suppose that $M\in\M_m$. Then
 \begin{equation}\label{eq_est_HM}
 \cd(H_M)\le 3g-5-m-\bp(M),
 \end{equation}
 where $\bp(M)$ is the rank of the free abelian group~$\BP(M)$.
\end{propos}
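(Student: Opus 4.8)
The plan is to deduce the estimate from the cohomological dimension computation of Bestvina, Bux, and Margalit after first stripping off the bounding pair twists. Every $h\in\Stab_{\I_g}(M)$ fixes each component $\gamma$ of $M$ together with its orientation: indeed $h$ lies in the Torelli group, so it acts trivially on $H_1(S_g)$, and by Ivanov's theorem it preserves every component of $M$. Hence $hT_\gamma h^{-1}=T_{h(\gamma)}=T_\gamma$, so $\BP(M)$ is a \emph{central} free abelian subgroup of $\Stab_{\I_g}(M)$ of rank $\bp(M)$, and \eqref{eq_BLMC} becomes a central extension
\begin{equation*}
1\to\Z^{\bp(M)}\to\Stab_{\I_g}(M)\to H_M\to 1.
\end{equation*}
For any central extension of a group $Q$ by $\Z^k$ one has $\cd=\cd(Q)+k$: the upper bound $\cd\le\cd(Q)+k$ is standard, and the reverse inequality follows from the Lyndon--Hochschild--Serre spectral sequence, in which, for a module $V$ with $H^{\cd(Q)}(Q;V)\ne 0$, the corner term $E_2^{\cd(Q),k}\cong H^{\cd(Q)}(Q;V)\otimes\Lambda^k(\Z^k)^*$ is neither the source nor the target of any differential and therefore survives to $E_\infty$. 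Applying this with $Q=H_M$ yields $\cd\bigl(\Stab_{\I_g}(M)\bigr)=\cd(H_M)+\bp(M)$, so the desired inequality \eqref{eq_est_HM} is equivalent to
\begin{equation}\label{eq_stab_bound_plan}
\cd\bigl(\Stab_{\I_g}(M)\bigr)\le 3g-5-m.
\end{equation}

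Next I would observe that \eqref{eq_stab_bound_plan} is precisely the cell–stabilizer estimate underlying the dimension count of~\cite{BBM07}. Since the action of~$\I_g$ on~$\B_g$ is without rotations, $\Stab_{\I_g}(M)$ is the pointwise stabilizer of the cell $P_M$, and $\dim P_M=m$ by~\eqref{eq_dim}. By Fact~\ref{fact_E1}, the Cartan--Leray spectral sequence for the action of~$\I_g$ on the contractible complex~$\B_g$ has $E^1_{p,q}\cong\bigoplus_{M}H_q\bigl(\Stab_{\I_g}(M)\bigr)$, the sum being over a set of $\I_g$-orbit representatives in~$\M_p$. Because $H_q(\Stab_{\I_g}(M))=0$ exactly for $q>\cd(\Stab_{\I_g}(M))$, the vanishing of $E^1_{p,q}$ above the line $p+q=3g-5$, which is what Bestvina, Bux, and Margalit establish in proving $\cd(\I_g)\le 3g-5$, is literally equivalent to \eqref{eq_stab_bound_plan} for all $M$. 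Their argument proceeds by induction on~$g$: cutting $S_g$ along a non–separating curve and invoking the Birman exact sequence (the mechanism behind Facts~\ref{fact_emb1} and~\ref{fact_emb2}), one bounds $\cd\bigl(\Stab_{\I_g}(M)\bigr)$ in terms of the cohomological dimensions of the Torelli groups of the complementary pieces $Y_1,\dots,Y_{c(M)}$ of $S_g\setminus M$. Concretely $H_M$ embeds in $\PMod(S_g\setminus M)=\prod_i\PMod(Y_i)$, so $\cd(H_M)\le\sum_i\cd(\overline{H}_i)$ where $\overline{H}_i$ is the image of $H_M$ in $\PMod(Y_i)$, and \eqref{eq_stab_bound_plan} emerges by summing these piecewise estimates together with the bookkeeping $\sum_i g(Y_i)=g-D(M)$, $|M|=m+D(M)$, both consequences of the additivity of Euler characteristic under cutting.

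The main work, and the main obstacle, is to extract \eqref{eq_stab_bound_plan} cleanly from~\cite{BBM07}. The delicate point is that $\overline{H}_i\subseteq\PMod(Y_i)$ is \emph{not} the abstract Torelli group of $Y_i$ but the subgroup cut out by the global condition of acting trivially on $H_1(S_g)$; whether a simple closed curve in $Y_i$ is separating or non–separating in~$S_g$ determines which twists survive. This is exactly the phenomenon recorded in Propositions~\ref{propos_stabilizer} and~\ref{propos_stabilizer2}, where each four–punctured sphere contributes a free factor~$\F_\infty$ of cohomological dimension~$1$ while the remaining piece contributes the group~$G$ of cohomological dimension $3g'-2$. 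I would therefore verify that the cohomological dimensions assembled piecewise in this manner never exceed $3g-5-m$, using the configurations of Sections~\ref{section_Abelian_proof} and~\ref{section_main_proof} as a consistency check: in those cases the two sides of~\eqref{eq_est_HM} coincide, showing that the estimate is sharp and that no slack is lost in the reduction to~\eqref{eq_stab_bound_plan}.
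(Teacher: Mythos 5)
Your opening reduction is correct and is a genuinely different route from the paper's. The observation that $\BP(M)$ is central in $\Stab_{\I_g}(M)$ (each $h$ fixes every component of $M$ with its orientation, so $hT_\gamma h^{-1}=T_\gamma$), together with the fact that a central extension of $Q$ by $\Z^k$ satisfies $\cd=\cd(Q)+k$ (your Lyndon--Hochschild--Serre corner argument is fine, since $Q$ acts trivially on $H^k(\Z^k;V)\cong V$ when $V$ is pulled back from $Q$), does convert the claim into $\cd\bigl(\Stab_{\I_g}(M)\bigr)\le 3g-5-m$. The paper never makes this transfer: it bounds $\cd(H_M)$ directly by the piecewise estimate $\cd(H_M)\le\sum_{i\le P}(3g_i+p_i-4)+\sum_{i>P}(p_i-3)$ from the proof of Lemma~6.12 of~\cite{BBM07}, identifies the right-hand side with $3g-3-P-|M|$ (proof of Lemma~6.13 there), and then gets the $-\bp(M)$ from the combinatorial inequality $D(M)+P\ge\bp(M)+2$ (Lemma~6.14 there) together with $|M|=m+D(M)$. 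What your reduction buys is a single black-box citation instead of three; what it costs is the extra group-cohomology input about central extensions.

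The genuine gap is that you never establish $\cd\bigl(\Stab_{\I_g}(M)\bigr)\le 3g-5-m$ — you yourself call its extraction from~\cite{BBM07} ``the main work, and the main obstacle,'' and the one concrete mechanism you offer for it fails. The vanishing of $E^1_{p,q}$ above the line $p+q=3g-5$ is \emph{not} ``literally equivalent'' to the stabilizer bound: $H_q(\Gamma;\Z)=0$ for $q>n$ follows from $\cd(\Gamma)\le n$ but does not imply it (there are acyclic groups of infinite cohomological dimension), and in any case Bestvina--Bux--Margalit prove the stabilizer bound by exactly the chain of estimates the paper quotes, not by reading it off a homology spectral sequence. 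So either you cite their stabilizer lemma as a black box (in which case your central-extension detour is unnecessary overhead, since their proof of that lemma already passes through the bound on $\cd(H_M)$ that is being asserted here), or you must reproduce the assembly the paper performs — and your sketch of the ``piecewise'' bookkeeping omits the one ingredient that actually produces the $-\bp(M)$ term, namely the inequality $D(M)+P\ge\bp(M)+2$ relating the rank of $\BP(M)$ to the homological rank $D(M)$ and the number $P$ of positive-genus complementary pieces. As written, the proposal is an accurate reduction plus an acknowledged placeholder where the quantitative content should be.
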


\begin{proof}
Denote  the  connected components of~$S_g\setminus M$ by $R_1,\ldots,R_{P+Z}$ so that $R_i$ has genus $g_i\ge 1$ for $1\le i\le P$ and $R_i$ has genus $0$ for $P+1\le i\le P+Z$. Let $p_i$ be the number of punctures of~$R_i$. The proof of Lemma~6.12 in~\cite{BBM07} contains an estimate
\begin{equation*}
\cd(H_M)\le \sum_{i=1}^P(3g_i+p_i-4)+ \sum_{i=P+1}^{P+Z}(p_i-3).
\end{equation*}
Further, in the proof of Lemma~6.13 in~\cite{BBM07} it is shown that the right-hand side of this inequality is equal to
$3g-3-P-|M|$. By~\eqref{eq_dim} we have $|M|=m+D(M)$ and by Lemma~6.14 in~\cite{BBM07} we have $D(M)+P\ge \bp(M)+2$. Combining all these results, we immediately obtain the required estimate~\eqref{eq_est_HM}.
\end{proof}

\begin{cor}\label{cor_HM2}
Suppose that $M\in\M_m$ and~$M\supseteq N$. Then
 $$\cd(\Stab_{H_N}(M))\le 3g-5-m-n.$$
\end{cor}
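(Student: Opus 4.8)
The plan is to relate $\Stab_{H_N}(M)$ to the group $H_M$ bounded in Proposition~\ref{propos_est_HM}, measuring the difference by a free abelian group of BP-twists. Write $\fN=\I_gN$ and set
$$
S=\Stab_{\I_g}(N)\cap\Stab_{\I_g}(M).
$$
First I would identify $\Stab_{H_N}(M)$ explicitly. Since $H_N=j_N(\Stab_{\I_g}(N))$ and the action of $H_N$ on~$C_*^N$ is without rotations, an element $j_N(h)$ with $h\in\Stab_{\I_g}(N)$ stabilizes the cell~$P_M$ if and only if $h(M)$ is isotopic to~$M$ in~$S_g$, that is, $h\in\Stab_{\I_g}(M)$; here one uses that an isotopy in~$S_g\setminus N$ extends to an isotopy of~$S_g$ fixing~$N$. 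Hence $\Stab_{H_N}(M)=j_N(S)$. As every BP-twist about components of~$N$ fixes both~$N$ and~$M$, we have $\BP(N)\subseteq S$, while $\BP(N)=\ker j_N$ by the exact sequence~\eqref{eq_BLMC}; therefore $\Stab_{H_N}(M)\cong S/\BP(N)$.

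Next I would bring in the larger group~$\BP(M)$. Every BP-twist about a pair of components of~$M$ fixes each component of~$M$, in particular each component of~$N$, so $\BP(N)\subseteq\BP(M)\subseteq S$. Restricting $j_M$ to~$S$ gives an injection $S/\BP(M)\hookrightarrow H_M$, whence by Proposition~\ref{propos_est_HM}
$$
\cd(S/\BP(M))\le\cd(H_M)\le 3g-5-m-\bp(M).
$$
On the other hand we have the short exact sequence
$$
1\to \BP(M)/\BP(N)\to S/\BP(N)\to S/\BP(M)\to 1,
$$
so subadditivity of cohomological dimension under group extensions gives
$$
\cd\bigl(\Stab_{H_N}(M)\bigr)=\cd(S/\BP(N))\le \cd\bigl(\BP(M)/\BP(N)\bigr)+\cd(S/\BP(M)).
$$

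It then remains to control the abelian kernel $\BP(M)/\BP(N)$. It is a subgroup of $\Stab_{H_N}(M)\subseteq H_N$, and $H_N$ is torsion-free by Proposition~\ref{propos_stabilizer2} (the factors~$\F_\infty$ are free and~$G$ is torsion-free); being also finitely generated, as a quotient of the free abelian group~$\BP(M)$ of rank~$\bp(M)$, it is free abelian of rank $\bp(M)-\bp(N)$, so $\cd(\BP(M)/\BP(N))=\bp(M)-\bp(N)$. Combining the two estimates yields
$$
\cd\bigl(\Stab_{H_N}(M)\bigr)\le \bigl(\bp(M)-\bp(N)\bigr)+\bigl(3g-5-m-\bp(M)\bigr)=3g-5-m-\bp(N).
$$
Finally, the $n$ bounding pairs $(\alpha_i,\alpha_i')$, $i=1,\dots,n$, furnish $n$ linearly independent BP-twists $T_{\alpha_i}T_{\alpha_i'}^{-1}$ in~$\BP(N)$, so $\bp(N)\ge n$ and the desired bound $\cd(\Stab_{H_N}(M))\le 3g-5-m-n$ follows.

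The step I expect to be the main obstacle is the clean identification $\Stab_{H_N}(M)\cong S/\BP(N)$ together with the proof that $\BP(M)/\BP(N)$ is torsion-free: the latter is what rules out the a priori danger that a quotient of free abelian groups acquires torsion, which would force infinite cohomological dimension, and it relies essentially on the torsion-freeness of~$H_N$ recorded in Proposition~\ref{propos_stabilizer2}.
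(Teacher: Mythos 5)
Your proof is correct and follows essentially the same route as the paper: the short exact sequence $1\to\BP(M)/\BP(N)\to\Stab_{H_N}(M)\to H_M\to 1$, Proposition~\ref{propos_est_HM}, and subadditivity of cohomological dimension. The only (harmless) deviation is in showing $\BP(M)/\BP(N)$ is free abelian: the paper observes that $\BP(N)$ is a direct summand of~$\BP(M)$, whereas you deduce torsion-freeness from the embedding into~$H_N$; both work.
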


\begin{proof}
The short exact sequences~\eqref{eq_BLMC} for the multicurves~$N$ and~$M$ give a commutative diagram with exact rows
\begin{equation*}
 \begin{tikzcd}
  1 \arrow[r] & \BP(N) \arrow[r] \arrow[d, hookrightarrow] &
  \Stab_{\I_g}(N) \arrow[r, "j_N"] & H_N \arrow[r] & 1 \\
  1 \arrow[r] & \BP(M) \arrow[r]  &
  \Stab_{\I_g}(M) \arrow[u, hookrightarrow] \arrow[r, "j_M"] & H_M \arrow[r] & 1
 \end{tikzcd}
\end{equation*}
in which vertical arrows are inclusions. The group~$\Stab_{H_N}(M)$ coincides with the image of~$\Stab_{\I_g}(M)$ under~$j_N$. Hence we obtain a short exact sequence
$$
1\to \BP(M)/\BP(N)\to\Stab_{H_N}(M)\to H_M\to 1.
$$
Since $\BP(N)$ is a direct summand of the free abelian group~$\BP(M)$, we see that the quotient group $\BP(M)/\BP(N)$ is a free abelian group of rank $\bp(M)-\bp(N)=\bp(M)-n$. Hence $\cd(\BP(M)/\BP(N))=\bp(M)-n$. Now, the required estimate follows from Proposition~\ref{propos_est_HM} and the  subadditivity of cohomological dimension (cf.~\cite[Proposition~VIII.2.4(b)]{Bro82}).
\end{proof}

Now, let $\fN$ denote the $\I_g$-orbit of the multicurve~$N$.
Since $N\in\M_n$, the following proposition is a direct consequence of formulae~\eqref{eq_hE1}--\eqref{eq_hE1_N} and Corollary~\ref{cor_HM2}.
\begin{propos}
$\hE^1_{p,q}(\fN)=0$ whenever either  $p+q>3g-5-n$ or $p<n$.
\end{propos}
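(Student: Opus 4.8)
The plan is to derive the vanishing directly, handling the two asserted ranges separately; both fall out of results already established.

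For the range $p<n$ there is nothing to prove beyond citing the earlier computation: the equality $\hE^1_{p,q}(\fN)=0$ is exactly formula \eqref{eq_hE1_N_zero}, which holds for every orbit $\fN\subseteq\M_n$ and every $q$. Conceptually this reflects the fact that the index set in \eqref{eq_hE1} is empty when $p<n$, since no multicurve $M\in\M_p$ can contain the $(2n+1)$-component multicurve $N$ once $\dim P_M<\dim P_N$.

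For the range $p+q>3g-5-n$ I would start from the canonical description \eqref{eq_hE1},
\[
\hE^1_{p,q}(\fN)\cong\bigoplus_{M\in\M_p\colon M\supseteq N}H_q(\Stab_{H_N}(M)),
\]
and argue that every summand vanishes. Fix such an $M$. Since $M\in\M_p$, Corollary~\ref{cor_HM2} applied with $m=p$ gives $\cd(\Stab_{H_N}(M))\le 3g-5-p-n$. The hypothesis $p+q>3g-5-n$ is precisely the inequality $q>3g-5-p-n$, so $q$ strictly exceeds the cohomological dimension of $\Stab_{H_N}(M)$. A group $G$ with $\cd(G)\le d$ admits a projective resolution of $\Z$ over $\Z G$ of length $d$, and such a resolution computes homology as well as cohomology; hence $H_q(G)=0$ for every $q>d$. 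Applying this to $G=\Stab_{H_N}(M)$ with $d=3g-5-p-n<q$ yields $H_q(\Stab_{H_N}(M))=0$. As this holds for each $M$ in the index set, the direct sum vanishes, so $\hE^1_{p,q}(\fN)=0$.

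There is no genuine obstacle here: the statement is a bookkeeping consequence of \eqref{eq_hE1}, \eqref{eq_hE1_N_zero}, and Corollary~\ref{cor_HM2}, which already absorb all the geometric and group-theoretic input (the structure of $H_N$, the estimate on $\cd(H_M)$, and the behaviour of $\BP$). The only step meriting a moment's attention is the passage from the cohomological-dimension bound to the vanishing of \emph{homology}, which relies on the standard fact recalled above that $\cd(G)\le d$ forces $H_q(G)=0$ for all $q>d$. Once this is noted, matching the two index ranges against $p<n$ and $q>3g-5-p-n$ completes the proof.
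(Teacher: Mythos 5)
Your proof is correct and follows exactly the route the paper takes: the paper states this proposition as a direct consequence of~\eqref{eq_hE1}, \eqref{eq_hE1_N_zero}, and Corollary~\ref{cor_HM2}, and your argument simply spells out those two cases, including the standard fact that $\cd(G)\le d$ forces $H_q(G)=0$ for $q>d$.
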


\begin{cor}\label{cor_hE_iso2}
All differentials~$d^r$ with $r\ge 1$ of the spectral sequence~$\hE^*_{*,*}(\fN)$ either from or to the groups $\hE^r_{n,\,3g-5-2n}(\fN)$ are trivial. Hence
$$
\hE^{\infty}_{n,\,3g-5-2n}(\fN)=\hE^1_{n,\,3g-5-2n}(\fN)\cong\HH_{3g-5-2n}(H_N).
$$
\end{cor}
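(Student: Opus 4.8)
The plan is to deduce the corollary directly from the vanishing proposition immediately preceding it, in exactly the same way as the analogous statement for the term $\hE^*_{g-2,g-1}(\fN)$ was obtained in Section~\ref{section_Abelian_proof}. The only ingredients required are the bidegree of the differentials of a first-quadrant spectral sequence, namely that $d^r$ has bidegree $(-r,r-1)$, together with the elementary fact that each page is a subquotient of the previous one, so that $\hE^r_{p,q}(\fN)=0$ for every $r\ge 1$ whenever $\hE^1_{p,q}(\fN)=0$. Thus it suffices to verify that both the source and the target of every differential touching the position $(n,3g-5-2n)$ lie outside the nonvanishing region
$$
\{(p,q)\colon p\ge n \text{ and } p+q\le 3g-5-n\}
$$
described by the preceding proposition.

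First I would examine the differentials leaving the term. For $r\ge 1$ the map $d^r$ issuing from $\hE^r_{n,3g-5-2n}(\fN)$ lands in $\hE^r_{n-r,\,3g-6-2n+r}(\fN)$, whose first coordinate $n-r\le n-1$ is strictly less than $n$; hence this group vanishes already on page~$E^1$, so it vanishes on every page, and the outgoing $d^r$ is zero. Next I would examine the differentials entering the term. For $r\ge 1$ the map $d^r$ arriving at $\hE^r_{n,3g-5-2n}(\fN)$ originates from $\hE^r_{n+r,\,3g-4-2n-r}(\fN)$, the sum of whose coordinates equals $3g-4-n>3g-5-n$; hence this group again vanishes on $E^1$ and therefore on every page, so the incoming $d^r$ is zero as well. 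Since every differential into or out of the position $(n,3g-5-2n)$ is trivial for all $r\ge 1$, this entry is never altered when passing from one page to the next, which yields the identification $\hE^{\infty}_{n,3g-5-2n}(\fN)=\hE^1_{n,3g-5-2n}(\fN)$.

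I do not expect any genuine obstacle here: the argument is purely a matter of bidegree bookkeeping, and the entire content of the corollary is already packaged in the vanishing proposition that precedes it (which in turn rests on Corollary~\ref{cor_HM2} bounding $\cd(\Stab_{H_N}(M))$). The one point worth stating explicitly, to keep the deduction clean, is the passage from vanishing on $E^1$ to vanishing on all later pages via the subquotient property; once that is noted, the two degree computations above immediately close both halves of the claim.
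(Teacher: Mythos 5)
Your argument is correct and is exactly the (unwritten) deduction the paper intends: the paper states this as an immediate corollary of the preceding vanishing proposition, relying on the bidegree $(-r,r-1)$ of $d^r$ and the subquotient property, and your two degree computations (target at $p=n-r<n$, source at $p+q=3g-4-n>3g-5-n$) are precisely right.
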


\section{Proof of Theorem~\ref{theorem_Abelian_explicit}}\label{section_Abelian_proof}

Recall that we have a primitive homology class $x\in \HH_1(S_g)$ fixed throughout the whole paper. This homology class was used in the construction of the abelian cycles~$A_{\CU,\bb}$ in Subsection~\ref{subsection_A}. We consider the complex of cycles~$\B_g=\B_g(x)$ for the same homology class~$x$, the spectral sequence~$E^*_{*,*}$ from Section~\ref{section_CL} for the action of~$\I_g$ on~$\B_g$, and the corresponding filtration~$\CF_{*,*}$ in $\HH_*(\I_g)$.

Each abelian cycle
\begin{equation*}
A_{\CU,\bb}=\CA\bigl(
T_{\beta_1}T_{\beta_1'}^{-1},\ldots,T_{\beta_{g-2}}T_{\beta_{g-2}'}^{-1}, T_{\delta_1},\ldots, T_{\delta_{g-1}}
\bigr)
\end{equation*}
corresponds to a $\beta\delta$-multicurve
\begin{equation} \label{eq_Gamma}
\Gamma=B\cup\Delta=\beta_1\cup\beta_1'\cup\cdots\cup\beta_{g-2}\cup\beta_{g-2}'\cup \delta_1\cup\cdots\cup\delta_{g-1}
\end{equation}
such that its $\delta$-part $\Delta=\delta_1\cup\cdots\cup\delta_{g-1}$ provides the splitting~$\CU$ in homology and  $$\bigl([\beta_1],\ldots,[\beta_{g-2}]\bigr)=\bb.$$
To detect the classes~$A_{\CU,\bb}$ in the spectral sequence~$E^*_{*,*}$ we will use type one $\alpha$-multicurves, see Definition~\ref{defin_alpha}.

\begin{definition}\label{defin_compat_alpha_gamma}
 A $\beta\delta$-multicurve
 $$
 \Gamma=B\cup\Delta=\beta_1\cup\beta_1'\cup\cdots\cup\beta_{g-2}\cup\beta_{g-2}'\cup\delta_1\cup\cdots\cup\delta_{g-1}
 $$
 and a type one $\alpha$-multicurve
 $$
 N=\alpha_0\cup\alpha_1\cup\alpha_1'\cup\cdots\cup\alpha_{g-2}\cup\alpha_{g-2}'\cup\alpha_{g-1}
 $$
 are said to be \textit{compatible} with each other if they are arranged as shown in Fig.~\ref{fig_alpha_beta_delta}, that is,
 \begin{enumerate}
  \item $N$ is disjoint from~$\Delta$,
  \item the curves $\alpha_0$ and~$\alpha_{g-1}$ lie in the once-punctured tori~$X_0$ and~$X_{g-1}$ bounded by~$\delta_1$ and~$\delta_{g-1}$, respectively,
  \item for $1\le i\le g-2$,
  \begin{itemize}
  \item the curves $\alpha_i$ and~$\alpha_i'$ lie in the twice-punctured torus~$X_i$ bounded by~$\delta_i$ and~$\delta_{i+1}$,
  \item the geometric intersection number of each of the four pairs of simple closed curves $(\alpha_i,\beta_i)$, $(\alpha_i,\beta_i')$, $(\alpha_i',\beta_i)$, and~$(\alpha_i',\beta_i')$ is equal to~$1$.
  \end{itemize}
 \end{enumerate}
\end{definition}

\begin{figure}
\begin{tikzpicture}[scale=.8]
\footnotesize

\draw[mygreen, thick] (1.5,-1.5) arc (270:90:0.3 and 1.5)
node[pos=0,below]{$\delta_1$};
\draw[mygreen, thick, dashed] (1.5,-1.5) arc (-90:90:0.3 and 1.5);

\draw[mygreen, thick] (4.5,-1.5) arc (270:90:0.3 and 1.5)
node[pos=0,below]{$\delta_2$};
\draw[mygreen, thick, dashed] (4.5,-1.5) arc (-90:90:0.3 and 1.5);

\draw[mygreen, thick] (7.5,-1.5) arc (270:90:0.3 and 1.5)
node[pos=0,below]{$\delta_3$};
\draw[mygreen, thick, dashed] (7.5,-1.5) arc (-90:90:0.3 and 1.5);

\draw[mygreen, thick] (10.3,-1.5) arc (270:90:0.3 and 1.5)
node[pos=0,below]{$\delta_{g-2}$};
\draw[mygreen, thick, dashed] (10.3,-1.5) arc (-90:90:0.3 and 1.5);

\draw[mygreen, thick] (13.7,-1.5) arc (270:90:0.3 and 1.5)
node[pos=0,below]{$\delta_{g-1}$};
\draw[mygreen, thick, dashed] (13.7,-1.5) arc (-90:90:0.3 and 1.5);

\draw[mygreen, thick, snake=brace] (-1.4,1.75) -- (1.4,1.75)
node[pos=.5,above]{$X_0$};
\draw[mygreen, thick, snake=brace] (1.6,1.75) -- (4.4,1.75)
node[pos=.5,above]{$X_1$};
\draw[mygreen, thick, snake=brace] (4.6,1.75) -- (7.4,1.75)
node[pos=.5,above]{$X_2$};
\draw[mygreen, thick, snake=brace] (10.4,1.75) -- (13.6,1.75)
node[pos=.5,above]{$X_{g-2}$};
\draw[mygreen, thick, snake=brace] (13.8,1.75) -- (16.4,1.75)
node[pos=.5,above]{$X_{g-1}$};

\draw[myblue, thick] (3,1.5) arc (90:270:.15 and .55); 
\draw[myblue, thick,->] (3,1.5) arc (90:185:.15 and .55); 
\draw[myblue, thick, dashed] (3,1.5) arc (90:-90:.15 and .55)
node[pos=0.5,right]{${}\!\beta_{1}$}; 

\draw[myblue, thick] (3,-.4) arc (90:270:.15 and .55); 
\draw[myblue, thick,->] (3,-1.5) arc (270:175:.15 and .55); 
\draw[myblue, thick, dashed] (3,-.4) arc (90:-90:.15 and .55)
node[pos=0.45,right]{${}\!\beta_{1}'$}; 

\draw[myblue, thick] (6,1.5) arc (90:270:.15 and .55); 
\draw[myblue, thick,->] (6,1.5) arc (90:185:.15 and .55); 
\draw[myblue, thick, dashed] (6,1.5) arc (90:-90:.15 and .55)
node[pos=0.5,right]{${}\!\beta_{2}$}; 

\draw[myblue, thick] (6,-.4) arc (90:270:.15 and .55); 
\draw[myblue, thick,->] (6,-1.5) arc (270:175:.15 and .55); 
\draw[myblue, thick, dashed] (6,-.4) arc (90:-90:.15 and .55)
node[pos=0.45,right]{${}\!\beta_{2}'$}; 

\draw[myblue, thick] (12,1.5) arc (90:270:.15 and .55)
node[pos=0.15,left=8pt,scale=.85,rotate=90]{\tiny${}\!\beta_{g-2}$}; 
\draw[myblue, thick,->] (12,1.5) arc (90:185:.15 and .55); 
\draw[myblue, thick, dashed] (12,1.5) arc (90:-90:.15 and .55); 

\draw[myblue, thick] (12,-.4) arc (90:270:.15 and .55)
node[pos=0.1,left=9pt,scale=.85,rotate=90]{\tiny${}\!\beta_{g-2}'$}; 
\draw[myblue, thick,->] (12,-1.5) arc (270:170:.15 and .55); 
\draw[myblue, thick, dashed] (12,-.4) arc (90:-90:.15 and .55); 

\draw[red, thick] (-1.5,0) arc (180:360:.55 and .15)
node[pos=0.5,below]{${}\!\alpha_0$}; 
\draw[red, thick,->] (-.4,0) arc (0:-95:.55 and .15); 
\draw[red, thick, dashed] (-.4,0) arc (0:180:.55 and .15); 

\draw[red, thick] (15.4,0) arc (180:360:.55 and .15)
node[pos=0.5,below]{${}\!\alpha_{g-1}$}; 
\draw[red, thick,->] (15.4,0) arc (180:275:.55 and .15); 
\draw[red, thick, dashed] (16.5,0) arc (0:180:.55 and .15); 

\draw[red, thick] (3,0) circle (.6);
\draw[red, thick,->] (3.6,0) arc (0:185:.6) node[pos=.75,above=1pt]{$\alpha_1$};
\draw[red, thick] (3,1.5) arc (90:270:1 and 1.5)
node[pos=.2,left]{$\alpha_1'$};
\draw[red, thick,->] (3,1.5) arc (90:182:1 and 1.5);
\draw[red, thick,dashed] (3,1.5) arc (90:-90:1 and 1.5);

\draw[red, thick] (6,0) circle (.6);
\draw[red, thick,->] (6.6,0) arc (0:185:.6) node[pos=.75,above=1pt]{$\alpha_2$};
\draw[red, thick] (6,1.5) arc (90:270:1 and 1.5)
node[pos=.2,left]{$\alpha_2'$};
\draw[red, thick,->] (6,1.5) arc (90:182:1 and 1.5);
\draw[red, thick,dashed] (6,1.5) arc (90:-90:1 and 1.5);

\draw[red, thick] (12,0) circle (.6);
\draw[red, thick,->] (12.6,0) arc (0:185:.6) ;
\draw[red, thick] (12.6,0) arc (0:180:.6) node[sloped,pos=0.25,above=-2pt]{\tiny$\alpha_{g-2}$};
\draw[red, thick] (12,1.5) arc (90:270:1.1 and 1.5);
\draw[red, thick,->] (12,1.5) arc (90:182:1.1 and 1.5);
\draw[red, thick,dashed] (12,1.5) arc (90:-90:1.1 and 1.5)
node[sloped,pos=0.29,above=-2pt]{\tiny$\alpha_{g-2}'$};

\draw[black, ultra thick] (0,1.5) -- (15,1.5) arc (90:-90:1.5) -- (0,-1.5) arc (270:90:1.5);
\draw[black, ultra thick] (0,0) circle (.4);
\draw[black, ultra thick] (3,0) circle (.4);
\draw[black, ultra thick] (6,0) circle (.4);
\draw[black, ultra thick] (12,0) circle (.4);
\draw[black, ultra thick] (15,0) circle (.4);

\fill[black] (9,0) circle (1.5pt);
\fill[black] (9.3,0) circle (1.5pt);
\fill[black] (8.7,0) circle (1.5pt);

\end{tikzpicture}
\caption{A $\beta\delta$-multicurve and an $\alpha$-multicurve compatible with each other}\label{fig_alpha_beta_delta}
\end{figure}

\begin{remark}
There is exactly one way to re-order the components of a type one $\alpha$-multicurve~$N$ so that conditions~(1)--(4) in Definition~\ref{defin_alpha} will still be satisfied. Namely, we can put
\begin{equation}\label{eq_reorder}
\begin{aligned}
\overline{\alpha}_0&=\alpha_{g-1}\,,&
\overline{\alpha}_{g-1}&=\alpha_{0}\,,&
\overline{\alpha}_i&=\alpha_{g-1-i}'\,,&
\overline{\alpha}_i^{\,\prime}&=\alpha_{g-1-i}\,,&& i=1,\ldots,g-2.
\end{aligned}
\end{equation}
Then conditions~(1)--(4) will be satisfied with $Y_i$ replaced with~$\overline{Y}_{\!\!i}=Y_{g-i}$ and $l_i$ replaced with $\bar{l}_i=l_{g-i-1}$. We will consider~$N$ as a multicurve  without chosen order of its components so that the two representations of~$N$ in form~\eqref{eq_N} are equally matched. In particular, this means that we need to refine Definition~\ref{defin_compat_alpha_gamma} by saying that conditions~\mbox{(1)--(3)} in this definition either must be satisfied or must be satisfied after re-ordering the components of~$N$ by~\eqref{eq_reorder}.
\end{remark}

\begin{propos}\label{propos_compatible_exist}
 For each $\beta\delta$-multicurve~$\Gamma$ of the form~\eqref{eq_Gamma} there exists a type one $\alpha$-multicurve compatible with it.
\end{propos}

\begin{proof}
Let $\CU=(U_0,\ldots,U_{g-1})$ be the splitting given by the $\delta$-part of~$\Gamma$ and let $$
x=l_0a_0+\ldots+l_{g-1}a_{g-1}
$$
be the corresponding decomposition of~$x$ with primitive $a_i\in U_i$ and $l_i>0$.  (Recall that the strict positivity of~$l_i$ is part of the definition of a $\beta\delta$-multicurve, see Subsection~\ref{subsection_A}.)
Choose oriented simple closed curves~$\alpha_0\subset X_0$ and~$\alpha_{g-1}\subset X_{g-1}$ in the primitive homology classes~$a_0$ and~$a_{g-1}$, respectively. From the definition of a $\beta\delta$-multicurve it follows that $a_i\cdot b_i=1$ for $i=1,\ldots,g-2$, where $b_i=[\beta_i]$. So, for every $i=1,\ldots,g-2$, we can choose an oriented simple closed curve~$\alpha_i\subset X_i$ in the homology class~$a_i$ such that $\alpha_i$ has a unique transverse intersection point with each of the curves~$\beta_i$ and~$\beta_i'$.  Put $\alpha_i'=T_{\beta_i}T_{\beta_i'}^{-1}(\alpha_i)$. Then the curves~$\alpha_i$, $\alpha_i'$, $\beta_i$, and~$\delta_i$ are arranged as shown in Fig.~\ref{fig_alpha_beta_delta}, so
$$
N=\alpha_0\cup\alpha_1\cup\alpha_1'\cup\cdots\cup\alpha_{g-2}\cup\alpha_{g-2}'\cup\alpha_{g-1}
$$
is a type one $\alpha$-multicurve compatible with~$\Gamma$.
\end{proof}

\begin{propos}\label{propos_Phi}
Each abelian cycle~$A_{\CU,\bb}$ with $\CU\in\CS$ and $\bb\in\fB(\CU)$ lies in the subgroup $\CF_{g-2,\,g-1}\subset \HH_{2g-3}(\I_g)$. Moreover, suppose that $A_{\CU,\bb}$ corresponds to a $\beta\delta$-multicurve~$\Gamma$ of the form~\eqref{eq_Gamma} and $N$ is a type one $\alpha$-multicurve compatible with~$\Gamma$. Then the image of~$A_{\CU,\bb}$ under the projection
$$
\CF_{g-2,\,g-1}\to\CF_{g-2,\,g-1}/\CF_{g-3,\,g}=E^{\infty}_{g-2,\,g-1}
$$
is represented by the element
$$
\iota_{P_N}\bigl(\CA(T_{\delta_1},\ldots,T_{\delta_{g-1}})\bigr)\in E^1_{g-2,\,g-1}.
$$
\end{propos}

\begin{proof}
  We put $f_i=T_{\beta_i}T_{\beta_i'}^{-1}$ for $i=1,\ldots,g-2$. Let $N_i$ be the $(2g-3)$-component oriented multicurve obtained from~$N$ by removing the component~$\alpha_i'$ and  $N_i'$ be the $(2g-3)$-component oriented multicurve obtained from~$N$ by removing the component~$\alpha_i$. Let $\varphi\colon[0,1]^{g-2}\to P_N$ be the isomorphism given by
 \begin{equation*}
\varphi(t_1,\ldots,t_{g-2})= l_0\alpha_0+l_{g-1}\alpha_{g-1}+\sum_{i=1}^{g-2}l_i\bigl((1-t_i)\alpha_i+t_i\alpha_i'\bigr)
\end{equation*}
 Then $P_{N_i}=\varphi(F_i)$ and~$P_{N_i'}=\varphi(F_i')$, where $F_i$ and~$F_i'$ are the facets of the cube~$[0,1]^{g-2}$ given by~$t_i=0$ and~$t_i=1$, respectively. Since $f_i(\alpha_i)=\alpha_i'$ and~$f_i$ stabilizes~$\alpha_j$ and~$\alpha_j'$ unless $j=i$, we have $f_i(N_i)=N_i'$. Moreover, the action by~$f_i$ yields the homeomorphism~$P_{N_i}\to P_{N_i'}$ that is identified by~$\varphi$ with the parallel translation taking~$F_i$ to~$F_i'$. So the assertion of the proposition follows immediately from Lemma~\ref{propos_spectral} for  the cell~$P_N$,  the homology class
$$
u=\CA(T_{\delta_1},\ldots,T_{\delta_{g-1}})\in \HH_{g-1}\bigl(\Stab_{\I_g}(N)\bigr),
$$
the homeomorphism~$\varphi$, and the mapping classes $f_1,\ldots,f_{g-2}$.
\end{proof}

Suppose that $N$ is an arbitrary type one $\alpha$-multicurve and let $\fN$ denote the $\I_g$-orbit of~$N$.
Combining Facts~\ref{fact_E1} and~\ref{fact_Einfty}, the construction of the auxiliary spectral sequence~$\hE^*_{*,*}$ from Section~\ref{section_aux_spectral}, and Corollary~\ref{cor_hE_iso}, we obtain a commutative diagram
\begin{equation}\label{eq_comm_diag}
\begin{tikzcd}[bezier bounding box = true]
 & \HH_{g-1}\bigl(\Stab_{\I_g}(N)\bigr) \arrow[r,"(j_N)_*" above]
 \arrow[d, "\iota_{P_N}"]
 & \HH_{g-1}(H_N) \arrow[d,"\cong" left, "\iota_{P_N}" right]\\
  & E^1_{g-2,\,g-1} \arrow[r, "\hPi_{\fN}"] \arrow[d, dashed, two heads]  & \hE^1_{g-2,\,g-1}(\fN) \arrow[d, equal]\\
  \CF_{g-2,\,g-1} \arrow[r, twoheadrightarrow] \arrow[rruu, controls = {+(13,-2.3) and +(3,-.5)}, "{}\quad \Phi_N" below] & E^{\infty}_{g-2,\,g-1} \arrow[r, "\hPi_{\fN}"] & \hE^{\infty}_{g-2,\,g-1}(\fN)
 \end{tikzcd}
\end{equation}
Here the dashed arrow is the canonical partial surjective homomorphism. Its domain coincides with the intersection of kernels of all differentials of the spectral sequence. We denote by~$\Phi_N$ the homomorphism obtained by passing around this diagram in the counterclockwise direction from the lower left  to the upper right corner.

\begin{propos}\label{propos_Phi_formula}
 Suppose that an abelian cycle~$A_{\CU,\bb}$ corresponds to a $\beta\delta$-multicurve~$\Gamma$ of the form~\eqref{eq_Gamma} and $N$ is a type one $\alpha$-multicurve compatible with~$\Gamma$. Then
 \begin{equation}\label{eq_PhiNform}
 \Phi_N(A_{\CU,\bb})=A_{\CU}=\CA(T_{\delta_1},\ldots,T_{\delta_{g-1}}).
 \end{equation}
 Besides, for any type one $\alpha$-multicurve~$\widetilde{N}$ that does not belong to the $\I_g$-orbit of~$N$, we have
 \begin{equation}\label{eq_PhiNform_zero}
 \Phi_{\widetilde{N}}(A_{\CU,\bb})=0.
 \end{equation}
\end{propos}

\begin{proof}
 Formula~\eqref{eq_PhiNform} follows immediately from Proposition~\ref{propos_Phi} and the commutativity of the diagram~\eqref{eq_comm_diag}. Now, let $\widetilde{\fN}$ be the $\I_g$-orbit of~$\widetilde{N}$. Consider the commutative diagram~\eqref{eq_comm_diag} with $N$ and~$\fN$ replaced by~$\widetilde{N}$ and~$\widetilde{\fN}$, respectively. Since the multicurves~$N$ and~$\widetilde{N}$ consist of the same number of components and lie in different $\I_g$-orbits, we see that $N$ does not contain a submulticurve that belongs to~$\widetilde{\mathfrak{N}}$. Then from the construction of the homomorphism
 $$
 \hPi_{\widetilde{\fN}}\colon E^1_{g-2,\,g-1}\to\hE^1_{g-2,\,g-1}\bigl(\widetilde{\fN}\bigr)
 $$
 (see Section~\ref{section_aux_spectral}) it follows that $\hPi_{\widetilde{\fN}}\circ\iota_{P_N}=0$. So formula~\eqref{eq_PhiNform_zero} also follows from Proposition~\ref{propos_Phi}.
\end{proof}

\begin{remark}
 By Corollary~\ref{cor_Hg-1inv} we have $A_{\CU}\ne 0$. So Proposition~\ref{propos_Phi_formula} implies that all type one $\alpha$-multicurves compatible with the same $\beta\delta$-multicurve~$\Gamma$ lie in the same $\I_g$-orbit.
\end{remark}

We are now ready to prove Theorem~\ref{theorem_Abelian_explicit}.

\begin{proof}[Proof of Theorem~\ref{theorem_Abelian_explicit}]
The set of all type one $\alpha$-multicurves is decomposed into $\I_g$-orbits. Let~$\mathcal{N}$ be a set of representatives for this decomposition, i.\,e., a set containing exactly one $\alpha$-multicurve in each $\I_g$-orbit. For each abelian cycle~$A_{\CU,\bb_{\CU}}$ consider a corresponding $\beta\delta$-multicurve~$\Gamma_{\CU}$ and a type one $\alpha$-multicurve~$N_{\CU}$ compatible with~$\Gamma_{\CU}$. Since $\I_g$ acts trivially on the homology of itself, we may replace~$\Gamma_{\CU}$ and~$N_{\CU}$ with~$h(\Gamma_{\CU})$ and~$h(N_{\CU})$, respectively, where $h\in\I_g$, and the corresponding abelian cycle will not change. So we may achieve that $N_{\CU}\in\mathcal{N}$ for all~$\CU$. Then the $\alpha$-multicurves~$N_{\CU_1}$ and~$N_{\CU_2}$ corresponding to any two different splittings either coincide or lie in different $\I_g$-orbits. Consider the homomorphism
$$
\Phi=\bigoplus_{N\in\mathcal{N}}\Phi_N\colon \CF_{g-2,\,g-1}\to\bigoplus_{N\in\mathcal{N}}\HH_{g-1}(H_N).
$$
By Proposition~\ref{propos_Phi_formula}, this homomorphism takes each abelian cycle~$A_{\CU,\bb_{\CU}}$ to the homology class~$A_{\CU}$ in the summand~$\HH_{g-1}(H_{N_{\CU}})$. By Corollary~\ref{cor_Hg-1inv} such homology classes for different splittings~$\CU$ are linearly independent. So the elements $\Phi(A_{\CU,\bb_{\CU}})$ and hence the elements~$A_{\CU,\bb_{\CU}}$ are linearly independent.
\end{proof}

\section{Proof of Theorem~\ref{theorem_explicit}}\label{section_main_proof}

We will follow the same line as in the proof of Theorem~\ref{theorem_Abelian_explicit}.
Each homology class
\begin{equation*}
A_{\Gamma,\psi,\xi}=\CA\bigl(
T_{\beta_1}T_{\beta_1'}^{-1},\ldots,T_{\beta_{n}}T_{\beta_{n}'}^{-1},
T_{\delta_1},\ldots, T_{\delta_{n}};\psi_*(\xi)
\bigr)\in \HH_{3g-5-n}(\I_g).
\end{equation*}
corresponds to a triple $(\Gamma,\psi,\xi)$ such that
\begin{itemize}
\item $\Gamma$ is a $\beta\delta$-multicurve
\begin{equation} \label{eq_Gamma2}
\Gamma=B\cup\Delta= \beta_1\cup\beta_1'\cup\cdots\cup\beta_n\cup\beta_n'\cup\delta_1\cup\cdots\cup\delta_n
\end{equation}
for some splitting~$\CU=(U_0,\ldots,U_n)$ belonging to~$\CS_n$,
\item $\psi\colon S_{g'}^1\hookrightarrow S_g\setminus \Gamma$ is an embedding,
\item $\xi\in \HH_{3g'-2}\bigl(\I_{g'}^1\bigr)$.
\end{itemize}
Hereinafter, we put $g'=g-n-1$.

To detect the classes~$A_{\Gamma,\psi,\xi}$ in the spectral sequence~$E^*_{*,*}$ we will use $(2n+1)$-component type two $\alpha$-multicurves, see Definition~\ref{defin_alpha2}.

\begin{definition}\label{defin_compat_alpha_gamma2}
 A pair $(\Gamma,\psi)$ and a $(2n+1)$-component (type two) $\alpha$-multicurve
 $$
 N=\alpha_0\cup\alpha_1\cup\alpha_1'\cup\cdots\cup\alpha_{n}\cup\alpha_{n}'
 $$
 are said to be \textit{compatible} with each other if $\Gamma$, $\psi\bigl(S_{g'}^1\bigr)$, and~$N$ are arranged as shown in Fig.~\ref{fig_alpha_beta_delta2}, that is,
 \begin{enumerate}
  \item $N$ is disjoint from~$\Delta$,
  \item the curve $\alpha_0$ lies in the once-punctured torus~$X_0$ bounded by~$\delta_1$,
  \item for $1\le i\le n$,
  \begin{itemize}
  \item the curves $\alpha_i$ and~$\alpha_i'$ lie in the twice-punctured torus~$X_i$ bounded by~$\delta_i$ and~$\delta_{i+1}$ if $i<n$ and in the twice-punctured torus~$Z$ bounded by~$\delta_n$ and~$\varepsilon=\psi\bigl(\partial S_{g'}^1\bigr)$ if $i=n$,
  \item the geometric intersection number of each of the four pairs of simple closed curves $(\alpha_i,\beta_i)$, $(\alpha_i,\beta_i')$, $(\alpha_i',\beta_i)$, and~$(\alpha_i',\beta_i')$ is equal to~$1$.
  \end{itemize}
 \end{enumerate}
\end{definition}

\begin{figure}
\begin{tikzpicture}[scale=.8]
\footnotesize

\draw[mygreen, thick] (1.5,-1.5) arc (270:90:0.3 and 1.5)
node[pos=0,below]{$\delta_1$};
\draw[mygreen, thick, dashed] (1.5,-1.5) arc (-90:90:0.3 and 1.5);

\draw[mygreen, thick] (4.5,-1.5) arc (270:90:0.3 and 1.5)
node[pos=0,below]{$\delta_2$};
\draw[mygreen, thick, dashed] (4.5,-1.5) arc (-90:90:0.3 and 1.5);

\draw[mygreen, thick] (7.5,-1.5) arc (270:90:0.3 and 1.5)
node[pos=0,below]{$\delta_n$};
\draw[mygreen, thick, dashed] (7.5,-1.5) arc (-90:90:0.3 and 1.5);

\fill[violet!7] (10.5,-1.5) arc (270:90:0.3 and 1.5) -- (15,1.5) arc (90:-90:1.5) -- cycle
(12,0) circle (.4) (15,0) circle (.4);
\fill[violet!15] (10.5,-1.5) arc (-90:90:0.3 and 1.5) -- (15,1.5) arc (90:-90:1.5) -- cycle
(12,0) circle (.4) (15,0) circle (.4);

\draw[violet, very thick] (10.5,-1.5) arc (270:90:0.3 and 1.5)
node[pos=0,below]{\small$\varepsilon$};
\draw[violet, very thick,dashed] (10.5,-1.5) arc (-90:90:0.3 and 1.5);

\draw[mygreen, thick, snake=brace] (-1.4,1.75) -- (1.4,1.75)
node[pos=.5,above]{$X_0$};
\draw[mygreen, thick, snake=brace] (1.6,1.75) -- (4.4,1.75)
node[pos=.5,above]{$X_1$};
\draw[violet, thick, snake=brace] (7.6,1.75) -- (10.4,1.75)
node[pos=.5,above]{$Z$};
\draw[violet, thick, snake=brace] (10.6,1.75) -- (16.4,1.75)
node[pos=.5,above]{$\psi\bigl(S_{g'}^1\bigr)$};
\draw[mygreen, thick, snake=brace] (7.6,2.75) -- (16.4,2.75)
node[pos=.5,above]{$X_n$};

\draw[myblue, thick] (3,1.5) arc (90:270:.15 and .55); 
\draw[myblue, thick,->] (3,1.5) arc (90:185:.15 and .55); 
\draw[myblue, thick, dashed] (3,1.5) arc (90:-90:.15 and .55)
node[pos=0.5,right]{${}\!\beta_{1}$}; 

\draw[myblue, thick] (3,-.4) arc (90:270:.15 and .55); 
\draw[myblue, thick,->] (3,-1.5) arc (270:175:.15 and .55); 
\draw[myblue, thick, dashed] (3,-.4) arc (90:-90:.15 and .55)
node[pos=0.45,right]{${}\!\beta_{1}'$}; 

\draw[myblue, thick] (9,1.5) arc (90:270:.15 and .55); 
\draw[myblue, thick,->] (9,1.5) arc (90:185:.15 and .55); 
\draw[myblue, thick, dashed] (9,1.5) arc (90:-90:.15 and .55)
node[pos=0.5,right]{${}\!\beta_{n}$}; 

\draw[myblue, thick] (9,-.4) arc (90:270:.15 and .55); 
\draw[myblue, thick,->] (9,-1.5) arc (270:175:.15 and .55); 
\draw[myblue, thick, dashed] (9,-.4) arc (90:-90:.15 and .55)
node[pos=0.44,right]{${}\!\beta_{n}'$};

\draw[red, thick] (-1.5,0) arc (180:360:.55 and .15)
node[pos=0.5,below]{${}\!\alpha_0$}; 
\draw[red, thick,->] (-.4,0) arc (0:-95:.55 and .15); 
\draw[red, thick, dashed] (-.4,0) arc (0:180:.55 and .15);

\draw[red, thick] (3,0) circle (.6);
\draw[red, thick,->] (3.6,0) arc (0:185:.6) node[pos=.75,above=1pt]{$\alpha_1$};
\draw[red, thick] (3,1.5) arc (90:270:1 and 1.5)
node[pos=.2,left]{$\alpha_1'$};
\draw[red, thick,->] (3,1.5) arc (90:182:1 and 1.5);
\draw[red, thick,dashed] (3,1.5) arc (90:-90:1 and 1.5);

\draw[red, thick] (9,0) circle (.6);
\draw[red, thick,->] (9.6,0) arc (0:185:.6) node[pos=.75,above=1pt]{{}\!$\alpha_n$};
\draw[red, thick] (9,1.5) arc (90:270:1 and 1.5)
node[pos=.2,left]{$\alpha_n'$};
\draw[red, thick,->] (9,1.5) arc (90:182:1 and 1.5);
\draw[red, thick,dashed] (9,1.5) arc (90:-90:1 and 1.5);

\draw[black, ultra thick] (0,1.5) -- (15,1.5) arc (90:-90:1.5) -- (0,-1.5) arc (270:90:1.5);
\draw[black, ultra thick] (0,0) circle (.4);
\draw[black, ultra thick] (3,0) circle (.4);
\draw[black, ultra thick] (9,0) circle (.4);
\draw[black, ultra thick] (12,0) circle (.4);
\draw[black, ultra thick] (15,0) circle (.4);

\fill[black] (6,0) circle (1.5pt);
\fill[black] (6.3,0) circle (1.5pt);
\fill[black] (5.7,0) circle (1.5pt);
\fill[black] (13.2,0) circle (1.5pt);
\fill[black] (13.5,0) circle (1.5pt);
\fill[black] (13.8,0) circle (1.5pt);

\end{tikzpicture}
\caption{A pair~$(\Gamma,\psi)$ and an $\alpha$-multicurve~$N$ compatible with each other}\label{fig_alpha_beta_delta2}
\end{figure}

The proofs of the following two propositions repeat literally the proofs of Propositions~\ref{propos_compatible_exist} and~\ref{propos_Phi}, respectively.

\begin{propos}\label{propos_compatible_exist2}
 For each $\beta\delta$-multicurve~$\Gamma$ of the form~\eqref{eq_Gamma2} and each embedding $\psi\colon S_{g'}^1\hookrightarrow S_g\setminus\Gamma$, there exists a $(2n+1)$-component $\alpha$-multicurve compatible with the pair~$(\Gamma,\psi)$.
\end{propos}

\begin{propos}\label{propos_Phi2}
Each homology class~$A_{\Gamma,\psi,\xi}$ with $(\Gamma,\psi,\xi)$ as above lies in the subgroup $\CF_{n,\,3g-5-2n}\subset \HH_{3g-5-n}(\I_g)$. Moreover, suppose that $N$ is a $(2n+1)$-component $\alpha$-multicurve compatible with the pair~$(\Gamma,\psi)$. Then the image of~$A_{\Gamma,\psi,\xi}$ under the projection
$$
\CF_{n,\,3g-5-2n}\to\CF_{n,\,3g-5-2n}/\CF_{n-1,\,3g-4-2n}=E^{\infty}_{n,\,3g-5-2n}
$$
is represented by the element
$$
\iota_{P_N}\bigl(\CA\bigl(T_{\delta_1},\ldots,T_{\delta_{n}};\psi_*(\xi)\bigr)\bigr)\in E^1_{n,\,3g-5-2n}.
$$
\end{propos}

Let $N$ be an arbitrary $(2n+1)$-component $\alpha$-multicurve and $\fN$ be the $\I_g$-orbit of~$N$. Combining Facts~\ref{fact_E1} and~\ref{fact_Einfty}, the construction of the auxiliary spectral sequence~$\hE^*_{*,*}$ from Section~\ref{section_aux_spectral}, and Corollary~\ref{cor_hE_iso2}, we obtain the following analog of the commutative diagram~\eqref{eq_comm_diag}:
\begin{equation*}\label{eq_comm_diag2}
\begin{tikzcd}[bezier bounding box = true]
 & \HH_{3g-5-2n}\bigl(\Stab_{\I_g}(N)\bigr) \arrow[r,"(j_N)_*" above]
 \arrow[d, "\iota_{P_N}"]
 & \HH_{3g-5-2n}(H_N) \arrow[d,"\cong" left, "\iota_{P_N}" right]\\
  & E^1_{n,\,3g-5-2n} \arrow[r, "\hPi_{\fN}"] \arrow[d, dashed, two heads]  & \hE^1_{n,\,3g-5-2n}(\fN) \arrow[d, equal]\\
  \CF_{n,\,3g-5-2n} \arrow[r, twoheadrightarrow] \arrow[rruu, controls = {+(15,-2.5) and +(3,-.2)}, "{}\quad \Phi_N" below] & E^{\infty}_{n,\,3g-5-2n} \arrow[r, "\hPi_{\fN}"] & \hE^{\infty}_{n,\,3g-5-2n}(\fN)
 \end{tikzcd}
\end{equation*}
\smallskip

Recall that according to Proposition~\ref{propos_H_homology_total} we have
$$
\HH_{3g-5-2n}(H_N)=\HH_n(F_N)\otimes \HH_{3g-5-3n}(G_N),
$$
where $F_N$ and~$G_N$ are the subgroups defined in Subsection~\ref{subsection_type2}.
The following proposition is proved likewise Proposition~\ref{propos_Phi_formula}.

\begin{propos}\label{propos_Phi_formula2}
 Consider a homology class~$A_{\Gamma,\psi,\xi}$ as above. Let $\CU\in\CS_n$ be the splitting given by the $\delta$-part of~$\Gamma$. Let $N$ be a $(2n+1)$-component $\alpha$-multicurve compatible with the pair~$(\Gamma,\psi)$. Then
 \begin{equation*}
 \Phi_N(A_{\Gamma,\psi,\xi})=A_{\CU}\times \psi_*(\xi)\in \HH_{3g-5-2n}(H_N).
 \end{equation*}
 Besides, for any $(2n+1)$-component $\alpha$-multicurve~$\widetilde{N}$ that does not belong to the $\I_g$-orbit of~$N$, we have
 \begin{equation*}
 \Phi_{\widetilde{N}}(A_{\Gamma,\psi,\xi})=0.
 \end{equation*}
\end{propos}

We are now ready to prove Theorem~\ref{theorem_explicit}.

\begin{proof}[Proof of Theorem~\ref{theorem_explicit}] As in the proof of Theorem~\ref{theorem_Abelian_explicit}, choose a set~$\mathcal{N}_n$ of representatives of $\I_g$-orbits of $(2n+1)$-component $\alpha$-multicurves. For each pair~$(\Gamma_{\CU},\psi_{\CU})$, consider a $(2n+1)$-component $\alpha$-multicurve~$N_{\CU}$ compatible with it. Again, as in the proof of Theorem~\ref{theorem_Abelian_explicit}, we may assume that $N_{\CU}\in\mathcal{N}_n$ for all~$\CU$. Consider the homomorphism
$$
\Phi=\bigoplus_{N\in\mathcal{N}_n}\Phi_N\colon \CF_{n,\,3g-5-2n}\to\bigoplus_{N\in\mathcal{N}_n}\HH_{3g-5-2n}(H_N).
$$
By Proposition~\ref{propos_Phi_formula2} this homomorphism takes each homology class~$A_{\Gamma_{\CU},\psi_{\CU},\xi_{\lambda}}$ to the element~$A_{\CU}\times (\psi_{\CU})_*(\xi_{\lambda})$ in the summand
$$
\HH_{3g-5-2n}(H_N)=\HH_n(F_N)\otimes \HH_{3g-5-3n}(G_N)=\HH_n(F_N)\otimes \HH_{3g'-2}(G_N),
$$
The isomorphism from Proposition~\ref{propos_iso_G} identifies the homomorphism
$$
(\psi_{\CU})_*\colon \HH_{3g'-2}\bigl(\I_{g'}^1\bigr) \to \HH_{3g'-2}(G_N)
$$
with the homomorphism
$$
\HH_{3g'-2}\bigl(\I_{g'}^1\bigr) \to \HH_{3g'-2}\bigl(\Stab_{\I_{g'+1}}(v)\bigr)
$$
from~\eqref{eq_mono_sequence}. Therefore, $(\psi_{\CU})_*$ is injective. Consequently, for each~$\CU$, the classes~$(\psi_{\CU})_*(\xi_{\lambda})$ with $\lambda\in\Lambda$ are linearly independent. So it follows from Proposition~\ref{propos_H_homology_total} that the elements $\Phi(A_{\Gamma_{\CU},\psi_{\CU},\xi_{\lambda}})$ and hence the elements~$A_{\Gamma_{\CU},\psi_{\CU},\xi_{\lambda}}$ are linearly independent.
\end{proof}

\end{document}